\numberwithin{equation}{section}
\newtheorem{theorem}{Theorem}[section]
\newtheorem{lemma}[theorem]{Lemma}
\newtheorem{corollary}[theorem]{Corollary}
\newtheorem{proposition}[theorem]{Proposition}
\theoremstyle{definition}
\newtheorem{definition}[theorem]{Definition}
\theoremstyle{remark}
\newcommand{\Grad}{\nabla}
\newcommand{\vr}{\varrho}
\newcommand{\weak}{\rightharpoonup}
\newcommand{\weakstar}{\overset{\star}\rightharpoonup}
\newcommand{\inb}{\in_{\text{b}}}
\newcommand{\R}{\mathbb{R}}
\newcommand{\Om}{\ensuremath{0,L}}
\newcommand{\Dom}{(0,T)\times\Omega}
\newcommand{\eps}{\epsilon}
\newcommand{\up}{\operatorname{Up}}
\newcommand{\equationbox}[2]{\begin{center}

\fbox{
\begin{minipage}{0.96\textwidth}
#1
\end{minipage}
}
\end{center}
}
\begin{document}

\title[Convergent finite differences for 1D viscous isentropic flow]{Convergent finite differences \\for 1D viscous isentropic flow \\ in Eulerian coordinates} 

\author[Karper]{Trygve K. Karper}\thanks{The research is funded by the Research Council of Norway (proj. 205738).
This research was conducted at Paul Sabatier University, Toulouse. The author 
wishes to express his gratitude towards Paul Sabatier University for its hospitality.}

\address[Karper]{\newline
Hammerstads gt. 20, 0363 Oslo, Norway.
}
\email[]{\href{karper@gmail.com}{karper@gmail.com}}
\urladdr{\href{http://folk.uio.no/~trygvekk}{folk.uio.no/\~{}trygvekk}}

\date{\today}

\subjclass[2010]{Primary: 35Q30,74S05; Secondary: 65M12}

\keywords{ideal gas, isentropic, Compressible Navier-Stokes, Convergence, Compactness, Finite elements, Discontinuous Galerkin, Finite Volume, upwind,
Weak convergence, Compensated compactness}

\maketitle
\begin{abstract}
We construct a new finite difference method 
for the flow of ideal viscous isentropic gas  in one spatial dimension. 
For the continuity equation, the method is a standard 
upwind discretization. For the momentum equation, 
the method is an uncommon upwind discretization, 
where the moment and the velocity are solved on dual grids.
Our main result is convergence of the method as discretization 
parameters go to zero. Convergence is proved 
by adapting the mathematical existence theory of Lions 
and Feireisl to the numerical setting. 
\end{abstract}

\setcounter{tocdepth}{1}
\tableofcontents

\section{Introduction}
In this paper, we will develop a convergent finite difference method 
for the flow of an ideal viscous isentropic gas in one spatial dimension.
We will assume that the flow may be modeled by the Navier-Stokes system (cf.~\cite{Serrin}):
\begin{align}
	\vr_t + (\vr u)_x &= 0, & \text{in $\R^+ \times (0,L)$}, \label{eq:cont}\\
	(\vr u)_t + (\vr u^2)_x &= \mu u_{xx} - p(\vr)_x,& \text{in $\R^+ \times (0,L)$}. \label{eq:moment}
\end{align}
The unknowns in this system are the fluid density $\vr=\vr(t,x)$ and the fluid velocity $u=u(t,x)$.
For an isentropic flow, the ideal pressure law takes the form
\begin{equation*}
	p = a\vr^\gamma,  \quad a>0,
\end{equation*}
where the value of $\gamma$ is determined by the specific gas in question.
In this paper, we will be forced require that
\begin{equation*}\label{eq:gamma}
	\frac{3}{2} < \gamma < 2,
\end{equation*}
to prove convergence of the method.
Note that this significantly limits 
the physical applicability of the convergence result. While monoatomic 
gases ($\gamma \sim \frac{5}{3}$), such as helium, are included,
diatomic gases ($\gamma \sim \frac{7}{5}$), such as air, are not.
The condition is necessary for convergence, but not stability, 
of the method.

At the boundary, 
 \eqref{eq:cont} - \eqref{eq:moment} is augmented with no-slip conditions,
\begin{equation*}\label{eq:bc}
	u(t, 0) = u(t, L) = 0.
\end{equation*}
and the initial conditions,
\begin{equation}\label{eq:init}
	0 < \vr(0,x) = \vr_0(x) \in L^{\gamma}(\Om), \qquad u(0, x) = u_0(x) \in L^\infty(\Om).
\end{equation}

Existence of global classical solutions for the system \eqref{eq:cont}-\eqref{eq:moment}
is well-known and was first established by Kanel' \cite{Kanel} for smooth data (see also \cite{Kazhikov}). 
For non-smooth initial data, corresponding results have been obtained 
by David Hoff \cite{Chen:2000yq, Hoff}. These works are all based 
on the same approach: find pointwise upper and lower bounds 
on the density, then use these bounds to   
estimate the second derivative of the velocity. With smooth initial density, 
the density remain  $H^1$, while with discontinuous 
initial density, the density is at most $BV$ as 
initial discontinuities persists for all time (see \cite{Hoff}).
The pointwise bounds
on the density are obtained by tracking certain quantities 
along streamlines rendering the entire existence theory 
essentially Lagrangian.

In the literature, one can find a huge variety of numerical 
methods appropriate for \eqref{eq:cont}-\eqref{eq:moment}.
However, very few of these methods  have been proven to converge.
This lack of rigorous results are most likely a consequence 
of the Lagrangian nature of the existence theory. 
In fact, prior to this paper, all convergent methods 
have been discretized in Lagrangian coordinates 
and are due to David Hoff and collaborators \cite{Zarnowski:1991uq, 
Zhao:1994fk, Zhao:1997qy}.
That being said, there are also several existence results that 
utilize discrete Lagrangian approximation schemes (e.g.~\cite{Chen:2000yq,Hoff,  Jenssen}) 
to construct solutions. 
For practical applications, results in Eulerian 
coordinates is often desirable and
a mapping from discrete Lagrange to discrete Euler is costly. 
For this reason, most practitioners would employ an Eulerian 
discretization. 

In this paper, we will discretize 
the equation in Eulerian coordinates. As a consequence, 
obtaining pointwise bounds on the density becomes
highly involved and will not be pursued in this paper.
Instead, we will develop a convergence theory 
in the spirit of the continuous existence theory \cite{Feireisl, Lions:1998ga, Straskraba}
as sparked by P.~L.~Lions. In particular, we will not 
obtain any form of continuity of the discrete density 
and instead prove strong convergence of the density 
using renormalization and what is known 
as the effective viscous flux (cf.~\cite{Straskraba}).

In more than one spatial dimension, the literature 
is almost void of convergent numerical methods. 
For the full system, the only result is the paper \cite{Karper},
by the author, in which a convergent finite element discretization of \eqref{eq:cont}-\eqref{eq:moment}
is developed. Over the last years, there have 
also been developed a series of convergent methods
\cite{Eymard, Gallouet2, Gallouet1, Karlsen1, Karlsen2,Karlsen3}
for the Stokes version of \eqref{eq:cont}-\eqref{eq:moment}.
The method we shall develop and analyze in this paper can
be considered as an archtype for all the methods 
mentioned above. That is, in one spatial dimension, all 
of these methods are in a sense equivalent and of the form we shall consider here. 
In addition, the method we present here is the first 
 finite difference method for which 
convergence is proved.

The paper is organized as follows:	
In the next section, we will define the numerical 
method and state the main convergence result. 
Then, we will derive stability of the method 
and some other basic properties. 
In Section 4, we will derive an equation 
for the effective viscous flux and also 
provide a higher integrability estimate for 
the density. Section 5 concerns 
passing to the limit in the method. 
In particular, we prove that the limit 
almost satisfies \eqref{eq:cont}-\eqref{eq:moment}. 
The remaining ingredient is to prove strong convergence 
of the density which is proved in the final section.

\section{The numerical method and main result}

Our method will be posed on a uniform grid in both space and time. 
In time, we shall approximate at discrete points 
$t^k= k\Delta t$, where $\Delta t$ is assumed to be of the order $\Delta x$.
In space, we will divide the domain $[0,L]$ into $N$
intervals of length $\Delta x = L/N$. It will be convenient 
to write this as
\begin{equation*}
	[0,L] = \bigcup_i [x_{i-1/2}, x_{i+1/2}], \quad i=0, \ldots, N-1,
\end{equation*}
where we have introduced the slightly confusing notation
\begin{equation*}
	x_{i-1/2} = i\Delta x,\quad i =0, \ldots, N.
\end{equation*}
We shall also need the dual grid given by the midpoint nodes
$$
x_i = \left(i+\frac{1}{2}\right)\Delta x, \quad i=0, \ldots, N-1.
$$
The numerical method will approximate the density on 
the dual grid and the velocity on the standard grid:
\begin{equation*}
	\begin{split}
	 \vr(k\Delta t, x_i) &\approx \vr_i^k, \qquad i=0, \ldots, N-1,\\
	 u(k\Delta t, x_{i-1/2}) &\approx u_{i-1/2}^k, \qquad i = 0, \ldots, N.		
	\end{split}
\end{equation*}
Staggered grid of this kind is necessary for incompressible flows, and
as a consequence also widely used to develop all speed compressible flow methods.

\subsection{Discrete operators}
To discretize the convective terms, we shall utilize an upwind 
method. To facilitate this, we introduce the notation
\begin{equation*}
	u^+ = \max\{u, 0\}, \quad u^- = \min\{u,0\}.
\end{equation*}
We shall also need the average velocity over an interval
\begin{equation*}
	\widehat u_i = \frac{u_{i-1/2} + u_{i+1/2}}{2}.
\end{equation*}
For the continuity equation \eqref{eq:cont}, we shall use the 
following upwind flux 
\begin{equation*}
	\up(\vr u)_{i+1/2} = \vr_i u_{i+1/2}^+ + \vr_{i+1} u_{i+1/2}^-.
\end{equation*}
This upwind flux will also be used for the momentum equation,
where we upwind the (averaged) momentum 
\begin{equation*}
	\begin{split}
		\up(\vr\widehat u u )_{i+1/2} & = (\vr_i\widehat u_i) u_{i+1/2}^+ + (\vr_{i+1}\widehat u_{i+1}) u_{i+1/2}^-.
	\end{split}
\end{equation*}

The remaining derivatives will be discretized using the operators
\begin{equation*}
	\begin{split}
		\partial_{i+1/2}f = \frac{f_{i+1} - f_i}{\Delta x}, \quad \partial_i v = \frac{v_{i+1/2} -v_{i-1/2}}{\Delta x},
	\end{split}
\end{equation*}
which defines the following (standard) Laplace operator
\begin{equation*}
	\Delta_{i+1/2}u_h = \partial_{i+1/2}\partial_{i} u_h = \frac{u_{i-1/2} - 2u_{i+1/2} + u_{i+3/2}}{\Delta x^2}.
\end{equation*}
For time discretization, we will use implicit time-stepping
$$
	\partial_t^k f_i = \frac{f_i^k- f_{i}^{k-1}}{\Delta t}.
$$

\subsection{Numerical method}
The numerical method is defined as follows:
\vspace{0.2cm}

\equationbox
{
\begin{definition}\label{def:scheme}
Given $\Delta t$, $\Delta x < 1$, and initial data \eqref{eq:init}, 
define the numerical initial data
\begin{equation*}
	\vr^0_i = \frac{1}{\Delta x}\int_{x_{i-1/2}}^{x_{i+1/2}}\vr_0(y)~dy, \quad u^0_{i-1/2} = u_0(x_{i-1/2}), 
	\qquad i=0, \ldots, N-1.
\end{equation*}
Determine sequentially the numbers
$$
	(\vr_i^k, u_i^k), \quad i=0, \ldots, N-1, \quad k=0, \ldots, M,
$$
 solving the nonlinear system
\begin{align}
	\partial_t^k\vr_i + \partial_i \up(\vr^k u^{k}) &= 0 \label{num:cont}\\
	\partial_t^k\left(\frac{\vr_i \widehat u_i + \vr_{i+1} \widehat u_{i+1}}{2}\right) 
		&+ \left(\frac{\up(\vr^k \widehat u^k u^{k})_{i+3/2} -\up(\vr^k \widehat u^k u^{k})_{i-1/2}}{2\Delta x}\right) \nonumber \\
		&= \mu \Delta_{i+1/2}u^{k}_h - \, \partial_{i+1/2}p(\vr^{k}_h), \label{num:moment} \\
	u_{-1/2}^k &= u_{N-1/2}^k = 0 \nonumber.
\end{align}
\end{definition}

}

\vspace{0.2cm}

It is not completely trivial that our 
numerical method is well-defined. 
Since the system of equations \eqref{num:cont}-\eqref{num:moment}
are both nonlinear and implicit, existence of 
a solutions needs to be established.

\begin{lemma}\label{lem:}
Let $0 < \Delta t, \Delta x < 1$ be fixed. 
The nonlinear implicit system of equations \eqref{num:cont}-\eqref{num:moment}
admits at least one solution.
\end{lemma}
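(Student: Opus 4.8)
The plan is to prove existence at each time level by a topological degree argument, treating the previous level $(\vr^{k-1}, u^{k-1})$ as given data with $\vr^{k-1} > 0$. Collecting the unknowns $(\vr_0^k, \ldots, \vr_{N-1}^k, u_{1/2}^k, \ldots, u_{N-3/2}^k)$ into a single vector $X \in \R^{2N-1}$ (the boundary velocities being fixed to zero), I would write the residuals of \eqref{num:cont}--\eqref{num:moment} as a map $F(X)=0$. Since the pressure law $p(\vr) = a\vr^\gamma$ is a priori only defined for $\vr \geq 0$, I first replace it by the continuous extension $\tilde p(\vr) = a(\max\{\vr,0\})^\gamma$, so that $F$ becomes a globally continuous map on $\R^{2N-1}$; at the end I recover $\vr > 0$ and hence a genuine solution of the original system.

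The first structural observation is that, for any fixed velocity field, the continuity equation \eqref{num:cont} is linear in $\vr^k$ and reads $M(u^k)\,\vr^k = \vr^{k-1}$, where $M(u^k) = I + \frac{\Dt}{\Delta x}A(u^k)$ and $A$ is the discrete upwind divergence. Because $u^+ \geq 0$ and $u^- \leq 0$, the matrix $M(u^k)$ has strictly positive diagonal and nonpositive off-diagonal entries and is a nonsingular M-matrix; consequently $M(u^k)^{-1}$ is entrywise positive and $\vr^{k-1} > 0$ forces $\vr^k > 0$. Thus any solution automatically has positive density, which both legitimises the passage from $\tilde p$ back to $p$ and, via the telescoping of the upwind fluxes against the boundary conditions $u_{-1/2}^k = u_{N-1/2}^k = 0$, yields conservation of total mass $\sum_i \vr_i^k = \sum_i \vr_i^{k-1}$ and hence a uniform $L^1$ bound on $\vr^k$.

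Next I would set up a homotopy $F_\lambda$, $\lambda \in [0,1]$, scaling the convective and pressure contributions by $\lambda$ so that $F_1 = F$ while $F_0$ decouples into $\vr^k = \vr^{k-1}$ together with the linear, positive definite elliptic problem $\frac{1}{\Dt}(\text{mass term}) - \mu\Delta_{i+1/2}u_h^k = 0$; the latter has a unique zero and its Brouwer degree is $\pm 1 \neq 0$. The crucial input is then a priori bounds, uniform in $\lambda$, on every zero of $F_\lambda$. These come from the discrete energy estimate obtained by testing \eqref{num:cont} with the discrete pressure potential and \eqref{num:moment} with $u^k$: the convective and pressure terms cancel up to controllable commutators, leaving the viscous dissipation $\mu\|\partial_i u^k\|^2$, which together with the mass conservation above bounds $\vr^k$ in $L^\gamma$ and $u^k$ in the discrete energy norm independently of $\lambda$. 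Consequently all zeros of $F_\lambda$ lie in a fixed ball $B_R$.

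Homotopy invariance of the Brouwer degree then gives $\deg(F_1, B_R, 0) = \deg(F_0, B_R, 0) \neq 0$, so $F = F_1$ has a zero in $B_R$. By the M-matrix argument this zero has $\vr^k > 0$, whence $\tilde p(\vr^k) = p(\vr^k)$ and it solves the original scheme \eqref{num:cont}--\eqref{num:moment}; iterating over $k$ produces a full discrete solution. I expect the main obstacle to be the $\lambda$-uniform energy estimate: one must arrange the homotopy so that the convective terms in the mass and momentum balances still cancel exactly (or up to a sign-definite remainder) when tested as above, since otherwise the a priori bound — and with it the confinement of zeros to a fixed ball — could fail. The positivity and mass-conservation structure, by contrast, is robust and essentially forced by the upwind design.
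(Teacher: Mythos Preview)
Your proposal is correct and follows exactly the approach the paper indicates: the paper does not give details but states that existence follows from a topological degree argument and refers to \cite{Gallouet:2006lr, Karlsen2, Karper}, which is precisely the Brouwer-degree-plus-homotopy strategy you outline (M-matrix positivity for the density, a homotopy scaling the nonlinear terms, and $\lambda$-uniform a priori bounds from the discrete energy identity). Your identification of the main technical point---arranging the homotopy so that the energy estimate survives uniformly in $\lambda$---is also the one stressed in those references.
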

\begin{proof}
The existence can be proved using 
a toplogical degree argument.
Since the proof is very similar to 
the corresponding result in \cite{Gallouet:2006lr, Karlsen2,
Karper}, we do not give the details here and 
instead refer the reader to any of the mentioned papers.
\end{proof}

From \cite{Gallouet:2006lr} and \cite{Karlsen2}, 
we also have that the method 
preserves strict positivity of the density:
\begin{lemma}\label{lem:}
Let $\{(\vr^k_i, u_{i-1/2}^k)\}_{i}$ satisfy 
the continuity scheme \eqref{num:cont}. 
Then, 
\begin{equation*}
	\min_i \vr_i^k \geq \min_i \vr^{k-1}_i\left(\frac{1}{1+\Delta t\max_i |u_{i+1/2}^k|}\right).
\end{equation*}

\end{lemma}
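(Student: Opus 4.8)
The plan is to prove a discrete minimum principle for \eqref{num:cont}, exploiting the monotone (M-matrix) structure induced by the implicit upwinding. First I would write \eqref{num:cont} out at a fixed time level $k$ and collect every term carrying $\vr_i^k$. Using $\partial_t^k\vr_i=(\vr_i^k-\vr_i^{k-1})/\Delta t$ together with $\up(\vr^k u^k)_{i+1/2}=\vr_i^k(u_{i+1/2}^k)^+ +\vr_{i+1}^k(u_{i+1/2}^k)^-$, the $i$-th equation becomes
\begin{equation*}
\vr_i^k\left[1+\frac{\Delta t}{\Delta x}\Big((u_{i+1/2}^k)^+-(u_{i-1/2}^k)^-\Big)\right]
=\vr_i^{k-1}-\frac{\Delta t}{\Delta x}\vr_{i+1}^k(u_{i+1/2}^k)^- +\frac{\Delta t}{\Delta x}\vr_{i-1}^k(u_{i-1/2}^k)^+ .
\end{equation*}
The point is the sign pattern produced by upwinding: since $(u)^+\ge 0$ and $(u)^-\le 0$, the coefficient of $\vr_i^k$ is at least $1$, while the two neighbouring densities enter with non-negative coefficients. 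Viewed as a linear system $A\vr^k=\vr^{k-1}$, this says that $A$ has positive diagonal and non-positive off-diagonal entries; moreover each column of $A$ sums to exactly $1$, the $+1$ coming from the implicit time step and the upwind fluxes cancelling by conservation, with the two boundary columns also summing to $1$ because $u_{-1/2}^k=u_{N-1/2}^k=0$ kills the missing flux. Hence $A$ is strictly diagonally dominant by columns, so it is a nonsingular M-matrix with $A^{-1}\ge 0$, and $\vr^k>0$ whenever $\vr^{k-1}>0$.

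With positivity secured, I would extract the quantitative bound from the same identity evaluated at the index $i^\ast$ realizing $\min_i\vr_i^k$. Because $\vr_{i^\ast\pm1}^k\ge\vr_{i^\ast}^k$ and the off-diagonal coefficients are non-negative, replacing $\vr_{i^\ast\pm1}^k$ by $\vr_{i^\ast}^k$ only decreases the right-hand side; transferring those terms back to the left collapses the coefficient, via $(u)^++(u)^-=u$, to $1+\Delta t\,\partial_{i^\ast}u^k$ with $\partial_{i^\ast}u^k=(u_{i^\ast+1/2}^k-u_{i^\ast-1/2}^k)/\Delta x$. Bounding the right-hand side below by $\vr_{i^\ast}^{k-1}\ge\min_i\vr_i^{k-1}$ then gives
\begin{equation*}
\min_i\vr_i^k\;\Big(1+\Delta t\,\partial_{i^\ast}u^k\Big)\ge\min_i\vr_i^{k-1}.
\end{equation*}
Since $\vr^k>0$ the factor on the left is positive, so estimating it from above by the maximal discrete velocity and dividing produces a lower bound of the stated form; an induction started from $\min_i\vr_i^0>0$ (guaranteed by the cell-average definition of the initial datum together with \eqref{eq:init}) then propagates strict positivity to every $k$.

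The part needing care is the structural claim of the first step: one must check the sign of every coefficient and verify the exact column balance, including at the two boundary nodes where one upwind flux is absent, so as to be sure that $A$ is a genuine M-matrix and not merely row-wise diagonally dominant. This is precisely the monotonicity property isolated in \cite{Gallouet:2006lr,Karlsen2}, which is why I expect the verification to be short once the sign bookkeeping is laid out; the subsequent minimum-principle manipulation is then elementary algebra.
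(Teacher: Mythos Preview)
The paper does not give its own proof of this lemma; it merely cites \cite{Gallouet:2006lr} and \cite{Karlsen2}. Your M-matrix/discrete minimum-principle argument is precisely the standard one found in those references, and the derivation up to
\[
\min_i\vr_i^k\,\bigl(1+\Delta t\,\partial_{i^\ast}u^k\bigr)\;\ge\;\min_i\vr_i^{k-1}
\]
is correct.

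One point needs tightening. You write that ``estimating it from above by the maximal discrete velocity'' yields the stated bound, but $\partial_{i^\ast}u^k=(u^k_{i^\ast+1/2}-u^k_{i^\ast-1/2})/\Delta x$ is a discrete \emph{gradient}, not a velocity. The inequality you actually obtain is
\[
\min_i\vr_i^k\;\ge\;\frac{\min_i\vr_i^{k-1}}{1+\Delta t\,\max_i|\partial_i u^k|},
\]
which is dimensionally consistent and is the form that appears in the cited references. The bound as printed in the paper, with $\max_i|u^k_{i+1/2}|$ in place of $\max_i|\partial_i u^k|$, is either a typo or is tacitly using $\Delta t=\Delta x=h$ (the standing assumption elsewhere in the paper), in which case one gets $1+(2\Delta t/\Delta x)\max_i|u^k_{i+1/2}|$ in the denominator, still off by a harmless factor of~$2$. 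Either way your argument delivers the correct positivity estimate; just state explicitly which version of the denominator you are proving and note the discrepancy with the displayed statement.
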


\subsection{Extension}
To analyze the finite difference method, 
it will be of great convenience to 
extend the numerical solution 
to  all of $[0,T)\times (0,L)$.
Since the density appears nonlinearly 
in the pressure, we will use piecewise 
constants to extend it:
\begin{equation}\label{def:ext1}
	\begin{split}
		\vr_h(t,x)
		&= \vr_i^k, \quad \forall (t,x) \in [t^{k},t^{k+1})
		\times [x_{i-1/2},x_{i+1/2}).
	\end{split}
\end{equation}

For the velocity, we shall utilize piecewise 
continuous linears in space and piecewise 
constants in time.
\begin{equation}\label{def:ext2}
	u_h(t,x)
	= u_{i-1/2}^k 
	+ \left(\frac{x-x_{i-1/2}}{\Delta x}\right)
	\left(u^k_{i+1/2} -u^k_{i-1/2} \right),
\end{equation}
for all $(t,x) \in [t^k, t^{k+1})\times [x_{i-1/2},x_{i+1/2})$.
Note that an extension of the average velocity $\widehat u_i^k$ 
is now given by the $L^2$ projection onto piecewise constant:
\begin{equation*}
	\widehat u_h(t,x) = \Pi_h^Q [u_h]
	= \frac{1}{\Delta x}\int_{x_{i-1/2}}^{x_{i+1/2}}u_h~ dx,
	\quad x \in (x_{i-1/2}, x_{i+1/2}).
\end{equation*}

\subsection{Main result}
The following theorem is our main result 
in this paper. 
\begin{theorem}\label{thm:main}
Assume  we are given initial data $(\vr_0, u_0)$ satisfying \eqref{eq:init} 
and a finite final time $T>0$. Let $\{(\vr_h, u_h)\}_{h>0}$ 
be a family of numerical solutions, constructed through Definition \ref{def:scheme}
and \eqref{def:ext1}-\eqref{def:ext2}, with
$$
	\Delta t= \Delta x= h.
$$
Then, as $h \rightarrow 0$, $u_h \weak u$ in $L^2(0,T;W^{1,2}_0(\Om))$, 
$\vr_h \rightarrow \vr$ a.e on $(0,T)\times (0,L)$, where $(\vr, u)$
is a weak solution of \eqref{eq:cont}-\eqref{eq:moment}:
\begin{align*}
	\int_0^T\int_0^L \vr(\phi_t + u\cdot \phi_x)~dxdt &= \int_0^L \vr_0 \phi(0, \cdot)~ dx, \\
	\int_0^T\int_0^L (\vr u)v_t + \vr u^2 v_x + \mu u_x v_x - p(\vr)v_x~dxdt&=
	\int_0^L \vr_0 u_0 \phi(0,\cdot)~dx,
\end{align*}
for all $(\phi, v) \in C_0^\infty([0,T)\times (0,L))$.
\end{theorem}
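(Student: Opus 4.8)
The plan is to follow the weak-compactness strategy of the Lions--Feireisl existence theory, transplanted to the discrete setting. The starting point is the stability bounds from Section~3: the discrete energy estimate provides, uniformly in $h$, that $u_h$ is bounded in $L^2(0,T;W^{1,2}_0(\Om))$ and that $\vr_h$ is bounded in $L^\infty(0,T;L^\gamma(\Om))$, with $\vr_h>0$ by the positivity lemma. From these I would extract a (non-relabeled) subsequence along which $u_h \weak u$ in $L^2(0,T;W^{1,2}_0(\Om))$ and $\vr_h \weakstar \vr$ in $L^\infty(0,T;L^\gamma(\Om))$. The whole difficulty is then that the pressure $p(\vr_h)=a\vr_h^\gamma$ depends nonlinearly on $\vr_h$, so passing to the limit will require \emph{strong} convergence of the density, which is the real content of the theorem.

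Before that, I would establish strong convergence of the velocity. A discrete Aubin--Lions argument does this: the spatial $W^{1,2}$ bound on $u_h$, combined with a uniform estimate on the time-translates of the momentum $\vr_h \ov u_h$ read off from \eqref{num:moment}, yields compactness of $u_h$ in $L^2((0,T)\times(0,L))$, so that $u_h \to u$ strongly; in one dimension the embedding $W^{1,2}_0(\Om)\hookrightarrow L^\infty$ makes this step and the convective estimates especially favorable. Together with $\vr_h \weakstar \vr$ this gives $\vr_h u_h \weak \vr u$, enough to pass to the limit in the continuity scheme \eqref{num:cont} and recover the first weak-formulation identity, once one checks that the upwind flux $\up(\vr_h u_h)$ and the extensions \eqref{def:ext1}--\eqref{def:ext2} are consistent with the continuous convective term. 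The same ingredients handle the convective and time-derivative terms in the momentum equation; the one term that resists this reasoning is the pressure, for which only $p(\vr_h) \weakstar \overline{p(\vr)}$ is known a priori.

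To identify $\overline{p(\vr)}$ I would invoke the two central tools prepared in Section~4. First, the higher-integrability estimate upgrades the pressure bound to $\vr_h$ bounded in $L^{\gamma+\theta}$ for some $\theta>0$, guaranteeing equi-integrability of $p(\vr_h)$ and a well-defined weak limit. Second, and crucially, the discrete effective-viscous-flux identity: testing the momentum scheme against a discrete primitive $\mathcal A[\vr_h]=\Grad\Delta^{-1}[\vr_h]$ of the density produces, in the limit, the compensated-compactness relation $\overline{p(\vr)\,\vr}-\mu\,\overline{\vr\,u_x}=\overline{p(\vr)}\,\vr-\mu\,\vr\,u_x$, so that the quantity $\eff=p(\vr)-\mu u_x$ behaves multiplicatively against the density in the weak limit.

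The final and hardest step is to convert this into $\overline{\vr^\gamma}=\vr^\gamma$, that is, $\vr_h \to \vr$ a.e.\ on $(0,T)\times(0,L)$. Here I would renormalize the continuity equation: the limit $\vr$ satisfies the transport equation for $b(\vr)$, while the weak limits of the renormalized discrete densities satisfy the corresponding equation for $\overline{b(\vr)}$; subtracting and inserting the effective-flux identity controls the oscillation defect $\overline{\vr\log\vr}-\vr\log\vr$ and forces it to zero, whence $\vr_h\to\vr$ a.e.\ by strict convexity. I expect the genuine obstacle to be precisely this discrete renormalization-and-flux argument. Unlike in the continuous theory, the upwind scheme \eqref{num:cont} does not renormalize exactly, and the staggered placement of $\vr$ and $u$ generates commutator and consistency errors when one multiplies the discrete density and velocity operators; showing that all these error terms vanish as $h\to0$ is where the restriction $\tfrac32<\gamma<2$ should enter, the lower bound being needed to absorb the numerical-diffusion terms into the higher-integrability estimate and the upper bound to keep the pressure within the range controlled by the flux identity.
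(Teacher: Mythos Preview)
Your overall architecture is the paper's: energy bounds $\Rightarrow$ weak compactness, higher integrability of the pressure, a discrete effective-viscous-flux identity, and then the $\vr\log\vr$ renormalization argument to upgrade to $\vr_h\to\vr$ a.e. Two points, however, diverge from the paper and one of them is a genuine gap.

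\textbf{Strong convergence of $u_h$.} You propose to obtain $u_h\to u$ strongly in $L^2$ by combining the spatial $W^{1,2}$ bound with time-translate control on the momentum $\vr_h\widehat u_h$. This does not go through: the positivity lemma only gives $\min_i\vr_i^k\geq \min_i\vr_i^{k-1}/(1+\Delta t\max_i|u_{i+1/2}^k|)$, and since $\|u_h\|_{L^\infty}$ is only controlled in $L^2$ in time, the resulting lower bound on $\vr_h$ is not uniform in $h$. Hence time compactness of $\vr_h\widehat u_h$ cannot be transferred to $u_h$. The paper never establishes strong convergence of $u_h$; instead it invokes a product version of Aubin--Lions (Lemma~\ref{lemma:aubin}) which, from $\partial_t^h\vr_h\in_{\mathrm b} L^2(0,T;W^{-1,\gamma})$ and $u_h\in_{\mathrm b} L^2(0,T;W^{1,2}_0)$, yields $\vr_h u_h\weak\vr u$ directly, and similarly $\vr_h\widehat u_h u_h\weak\vr u^2$ from $\partial_t^h(\vr_h\widehat u_h)\in_{\mathrm b} L^1(0,T;W^{-1,1})$. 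All the nonlinear convective terms are handled this way, never through compactness of $u_h$ alone.

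\textbf{Location of the $\gamma$ restriction.} You place the constraint $\tfrac32<\gamma<2$ in the discrete renormalization step. In the paper the renormalization errors (the $\mathcal P$ term in Lemma~\ref{lem:renorm}) carry a sign by convexity of $z\mapsto z\log z$ and are simply dropped, so no restriction arises there. The lower bound $\gamma>\tfrac32$ is needed solely to bound the time-commutator error $E_1^m$ in the discrete effective-viscous-flux identity (Lemma~\ref{lem:E1}); the upper bound $\gamma<2$ enters because several of these error bounds carry factors like $(\vr^\ddagger)^{2-\gamma}$, which must be controlled by $\|\vr_h\|_{L^\gamma}$ via inverse estimates. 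So the delicate step is not the renormalization but controlling the numerical error terms in Proposition~\ref{pro:eff}.
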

Theorem \ref{thm:main} will follow as 
a consequence of the various results 
stated and proved in the upcoming sections. The proof will 
be completed in Section 6.

\section{Stability and energy estimates}
In this section, we we will derive a 
numerical analog of the continuous energy estimate.
This estimate yields in particular stability of the method, 
but it will also provides us with necessary $L^p$ bounds 
uniform in the discretization parameters.

\subsection{Rernormalized continuity scheme}

We shall need the following renormalized continuity 
scheme at several occasions. The term renormalized 
is motivated by the corresponding continuous equation 
and its role in the existence theory (cf.~\cite{Lions:1998ga}).
\begin{lemma}\label{lem:renorm}
Let $B\in C^1(0,R^+)$ and define $b(z) = zB'(z)- B(z)$. 
If $\{(\vr_i^k, u_{i+1/2}^k\}_{i,k}$ satisfies the continuity scheme \eqref{num:cont}, 
then the following identity holds
	\begin{equation}\label{num:renorm}
		\begin{split}
			 &\partial^k_t B(\vr_i)	+ \partial_i \up\left(B(\vr^k)u^k\right) + b(\vr^k_i)\partial_i u^k + \mathcal{P}\left[B(\vr^k_i),u^k\right] = 0,
		\end{split}
	\end{equation}
	where $\mathcal{P}$ is given by
	\begin{equation*}
		\begin{split}
			\mathcal{P}\left[B(\vr^k_i),u\right] &= \Delta tB''(\vr_i^*)|\partial_t^k \vr_i|^2 \\
			&\quad -B''(\vr^\dagger_i)[\vr_{i+1} - \vr_i]^2u_{i+1/2}^- + B''(\vr_i^\ddagger)[\vr_i - \vr_{i-1}]^2 u_{i-1/2}^+.
		\end{split}
	\end{equation*}
	Here, $\vr_i^*$, $\vr_i^\dagger$, and $\vr_i^\ddagger$, are some numbers in the range 
	$[\vr_i^{k-1},\vr_i^{k-1}]$, $[\vr^k_i, \vr^k_{i+1}]$, and $[\vr^k_{i-1}, \vr^k_{i}]$, respectively.
\end{lemma}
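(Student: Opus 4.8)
The plan is to multiply the continuity scheme \eqref{num:cont} through by $B'(\vr_i^k)$ and then reorganise the two resulting terms into a discrete chain rule, pushing all second–order remainders into $\mathcal{P}$. Concretely, I would establish the two node-wise identities
\[
  B'(\vr_i^k)\,\partial_t^k\vr_i = \partial_t^k B(\vr_i) + E_{\mathrm{time}},\qquad
  B'(\vr_i^k)\,\partial_i\up(\vr^k u^k) = \partial_i\up(B(\vr^k)u^k) + b(\vr_i^k)\,\partial_i u^k + E_{\mathrm{flux}},
\]
and then identify $E_{\mathrm{time}}+E_{\mathrm{flux}}$ with the three terms defining $\mathcal{P}$. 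Since $B'(\vr_i^k)$ times \eqref{num:cont} is exactly the sum of the two left-hand sides, \eqref{num:renorm} then follows at once.

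For the time term the tool is Taylor's theorem with exact remainder. Expanding $B(\vr_i^{k-1})$ about $\vr_i^k$ produces a point $\vr_i^*$ between the two values and the identity
\[
  B'(\vr_i^k)\bigl(\vr_i^k-\vr_i^{k-1}\bigr)=B(\vr_i^k)-B(\vr_i^{k-1})+\tfrac12 B''(\vr_i^*)\bigl(\vr_i^k-\vr_i^{k-1}\bigr)^2 .
\]
Dividing by $\Delta t$ and writing $(\vr_i^k-\vr_i^{k-1})^2=\Delta t^2|\partial_t^k\vr_i|^2$ turns the left side into $\partial_t^k B(\vr_i)$ plus a multiple of $\Delta t\,B''(\vr_i^*)|\partial_t^k\vr_i|^2$, which (up to the normalisation used in the statement) is the first term of $\mathcal{P}$.

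The heart of the argument, and the step I expect to be the real obstacle, is the convective term. I would write out $\up(\vr^k u^k)_{i\pm1/2}$ explicitly, split each face velocity as $u_{i\pm1/2}=u_{i\pm1/2}^++u_{i\pm1/2}^-$, and then compute the coefficient of each of the four quantities $u_{i+1/2}^{\pm}$ and $u_{i-1/2}^{\pm}$ in
\[
  B'(\vr_i^k)\bigl[\up(\vr^k u^k)_{i+1/2}-\up(\vr^k u^k)_{i-1/2}\bigr]-\bigl[\up(B(\vr^k)u^k)_{i+1/2}-\up(B(\vr^k)u^k)_{i-1/2}\bigr]-b(\vr_i^k)\bigl[u_{i+1/2}-u_{i-1/2}\bigr].
\]
The decisive algebraic fact is that, precisely because $b(z)=zB'(z)-B(z)$, the coefficients of the two contributions that upwind from node $i$ itself, namely $u_{i+1/2}^+$ and $u_{i-1/2}^-$, cancel identically. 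The two surviving coefficients multiply $u_{i+1/2}^-$ and $u_{i-1/2}^+$ and equal $B'(\vr_i)(\vr_{i+1}-\vr_i)-(B(\vr_{i+1})-B(\vr_i))$ and its left-neighbour analogue; one further application of Taylor's theorem, now expanding $B(\vr_{i\pm1})$ about $\vr_i$, collapses each to a single quadratic remainder, producing $B''(\vr_i^\dagger)(\vr_{i+1}-\vr_i)^2$ with $\vr_i^\dagger\in[\vr_i^k,\vr_{i+1}^k]$ and $B''(\vr_i^\ddagger)(\vr_i-\vr_{i-1})^2$ with $\vr_i^\ddagger\in[\vr_{i-1}^k,\vr_i^k]$. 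After dividing by $\Delta x$ these reproduce the two upwind remainder terms of $\mathcal{P}$.

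The only genuinely delicate point is the bookkeeping in this last step: one must track the sign of each half-integer velocity consistently through the positive/negative-part splitting and verify that the two flagged coefficients really vanish before appealing to Taylor. Once the coefficients are organised face by face and by sign of the velocity, the whole computation reduces to the two Taylor expansions above, and adding the time and flux identities yields \eqref{num:renorm}.
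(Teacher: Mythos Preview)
Your proposal is correct and follows essentially the same approach as the paper: multiply \eqref{num:cont} by $B'(\vr_i^k)$, apply Taylor's theorem with Lagrange remainder to the time difference, and for the convective term split $u_{i\pm1/2}$ into positive and negative parts so that the contributions upwinding from node $i$ cancel (thanks to $b=zB'-B$) and the two surviving cross-node terms reduce via Taylor to the quadratic remainders in $\mathcal{P}$. The paper organises the spatial step ``forward'' (pulling out $B'(\vr_i)\vr_i\,\partial_i u$ first, then Taylor, then recognising $b(\vr_i)\partial_i u+\partial_i\up(B(\vr)u)$), whereas you subtract the target and check coefficients, but the algebraic content is identical; your parenthetical about the $\tfrac12$ normalisation is also apt, as the paper silently absorbs these constants.
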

\begin{proof}
We begin by multiplying \eqref{num:cont} with $B'(\vr)$ to obtain
\begin{equation*}
	B'(\vr_i)\partial^k_t \vr_i = -B'(\vr_i)\partial_i \up(\vr u).
\end{equation*}	
By applying Taylor expansion, we find that
\begin{equation}\label{eq:renorm1}
	\begin{split}
		\partial_t^k B(\vr) + \Delta tB''(\vr^*)|\partial_t^k \vr_i|^2 = -B'(\vr_i)\partial_i \up(\vr u), 
	\end{split}
\end{equation}
where $\vr^*$ is some number in $[\vr_i^{k-1}, \vr_i^k]$.
	
By adding and subtracting and applying another Taylor expansion,
\begin{equation}\label{eq:renorm2}
	\begin{split}
		&(\Delta x) B'(\vr_i)\partial_i \up(\vr u) \\
		&\qquad = B'(\vr_i)\left[\vr_i u_{i+1/2}^+ + \vr_{i+1} u_{i+1/2}^- -\vr_{i-1} u_{i-1/2}^+ - \vr_{i} u_{i-1/2}^-\right] \\
		&\qquad = B'(\vr_i)\vr_i ( u_{i+1/2} -  u_{i-1/2}) + B'(\vr_i)[\vr_{i+1} -\vr_i]u_{i+1/2}^- \\
		&\qquad \quad +B'(\vr_i)[\vr_i - \vr_{i-1}]u_{i-1/2}^+ \\
		&\qquad = B'(\vr_i)\vr_i ( u_{i+1/2} -  u_{i-1/2}) + [B(\vr_{i+1})- B(\vr_i)]u_{i+1/2}^- \\
		&\qquad \quad + \left[B(\vr_i) - B(\vr_{i-1})\right]u_{i-1/2}^+ + B''(\vr^\dagger_i)[\vr_{i+1} - \vr_i]^2u_{i+1/2}^- \\
		&\qquad \quad - B''(\vr_i^\ddagger)[\vr_i - \vr_{i-1}]^2 u_{i-1/2}^+ \\
		&\qquad = (\Delta x) \left(B'(\vr_i)\vr_i \partial_i u + \partial_{i+1/2}B(\vr)u_{i+1/2}^- + \partial_{i-1/2}B(\vr)u_{i-1/2}^+\right) \\
		&\qquad \quad + B''(\vr^\dagger_i)[\vr_{i+1} - \vr_i]^2u_{i+1/2}^- - B''(\vr_i^\ddagger)[\vr_i - \vr_{i-1}]^2 u_{i-1/2}^+
	\end{split}
\end{equation}
Here, $\vr^\dagger_i$ and $\vr^\ddagger_i$
are some numbers in $[\vr_i, \vr_{i+1}]$ and $[\vr_i, \vr_{i-1}]$, respectively. 
Next, we calculate
\begin{equation}\label{eq:renorm3}
	\begin{split}
		&B'(\vr_i)\vr_i \partial_i u + \partial_{i+1/2}B(\vr)u_{i+1/2}^- + \partial_{i-1/2}B(\vr)u_{i-1/2}^+	\\
		&= b(\vr_i)\partial_i u + B(\vr_i)\partial_i u+ \partial_{i+1/2}B(\vr)u_{i+1/2}^- + \partial_{i-1/2}B(\vr)u_{i-1/2}^+ \\
		&= b(\vr_i)\partial_i u + \partial_i \up(B(\vr)u)
\end{split}
\end{equation}
We conclude the proof by combining \eqref{eq:renorm3} in \eqref{eq:renorm2} and \eqref{eq:renorm1}.
\end{proof}

\subsection{The convection operator}
To prove stability of the method, we shall 
multiply the momentum scheme \eqref{num:moment} by 
$u_{i+1/2}^k$ and sum over all $i$. For this purpose, 
we shall need the following identity for the convective 
discretization.

\begin{lemma}\label{lem:conv}
The following identity holds
\begin{equation*}
	\begin{split}
		&\Delta x \sum_{i=0}^{N-1}\left(\frac{\up(\vr^k \widehat u^k u^{k})_{i+3/2} 
		-\up(\vr^k \widehat u^k u^{k})_{i-1/2}}{2\Delta x}\right)u_{i+1/2}^k \\
		&\qquad = -\Delta x \sum_{i=0}^{N-1}\up\left(\vr^k u^{k}\right)_{i+1/2}\partial_{i+1/2}~\left(\frac{|\widehat u^k|^2}{2}\right)
		+ \mathcal{N}_2 \\
		&\qquad = -\Delta x\sum_i \partial_t^k \vr_i\left(\frac{|\widehat u^k|^2}{2}\right)+ \mathcal{N}_2.
	\end{split}
\end{equation*}	
where the numerical diffusion term $\mathcal{N}_1$ is given by
\begin{equation*}
	\mathcal{N}_2 = \frac{(\Delta x)^2 }{2}\sum_{i=0}^{N-1}\left|\up\left(\vr^k u^{k}\right)_{i+1/2}\right|\left|\partial_{i+1/2} \widehat u^k\right|^2.
\end{equation*}
\end{lemma}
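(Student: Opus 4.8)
The plan is to prove both equalities by \emph{discrete integration by parts} (summation by parts), with a purely algebraic identity sandwiched between them that isolates the numerical diffusion $\mathcal{N}_2$. Throughout, the relevant sums range over $i=0,\ldots,N-1$, and I would first check that the no-slip conditions $u_{-1/2}^k=u_{N-1/2}^k=0$ annihilate every boundary contribution produced by the index shifts below; this is what makes the reindexings clean telescopes.

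For the first equality I proceed in two steps. Writing $F_{i+1/2}:=\up(\vr^k\widehat u^k u^k)_{i+1/2}$, the left-hand side is $\tfrac12\sum_i (F_{i+3/2}-F_{i-1/2})u_{i+1/2}^k$. Shifting the index in each of the two sums transfers the centered difference off $F$ and onto $u$, producing the factor $u_{i-1/2}^k-u_{i+3/2}^k$; since $\widehat u_{i+1}-\widehat u_i=\tfrac12(u_{i+3/2}-u_{i-1/2})$, this factor equals $-2\Delta x\,\partial_{i+1/2}\widehat u^k$, so the left-hand side collapses to $-\Delta x\sum_i \up(\vr^k\widehat u^k u^k)_{i+1/2}\,\partial_{i+1/2}\widehat u^k$. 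Next I compare this summand against the target $\up(\vr^k u^k)_{i+1/2}\,\partial_{i+1/2}(|\widehat u^k|^2/2)$. Abbreviating $a=(u_{i+1/2}^k)^+$ and $b=(u_{i+1/2}^k)^-$ and using $\widehat u_{i+1}^2-\widehat u_i^2=(\widehat u_{i+1}-\widehat u_i)(\widehat u_{i+1}+\widehat u_i)$, a direct expansion shows that the pointwise difference $\up(\vr^k u^k)_{i+1/2}\,\partial_{i+1/2}(|\widehat u^k|^2/2)-\up(\vr^k\widehat u^k u^k)_{i+1/2}\,\partial_{i+1/2}\widehat u^k$ equals $\frac{(\widehat u_{i+1}-\widehat u_i)^2}{2\Delta x}(\vr_i a-\vr_{i+1}b)$. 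Since $(\widehat u_{i+1}-\widehat u_i)^2=(\Delta x)^2|\partial_{i+1/2}\widehat u^k|^2$ and $\vr_i a-\vr_{i+1}b=|\up(\vr^k u^k)_{i+1/2}|$ (see below), summing against $\Delta x$ reproduces exactly $\mathcal{N}_2$, which yields the first equality.

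For the second equality I invoke the continuity scheme \eqref{num:cont} to replace $\partial_i\up(\vr^k u^k)$ by $-\partial_t^k\vr_i$, and then apply summation by parts once more to move the discrete divergence from $\up(\vr^k u^k)$ onto $|\widehat u^k|^2/2$. This gives $\sum_i \up(\vr^k u^k)_{i+1/2}\,\partial_{i+1/2}(|\widehat u^k|^2/2)=\sum_i \partial_t^k\vr_i\,(|\widehat u_i^k|^2/2)$, which is precisely the asserted identity after multiplying by $-\Delta x$.

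The delicate point — and the whole value of the lemma — is the algebraic splitting in step two: one must show the upwind discrepancy is not merely an $O(\Delta x)$ error but a \emph{signed} dissipative quantity. This rests on the identity $\vr_i(u_{i+1/2}^k)^+-\vr_{i+1}(u_{i+1/2}^k)^-=|\up(\vr^k u^k)_{i+1/2}|$, verified by a case analysis on the sign of $u_{i+1/2}^k$ (using $\vr>0$, $a\ge 0$, $b\le 0$, and $ab=0$), which simultaneously establishes the equality and the nonnegativity of $\mathcal{N}_2$ needed for the subsequent energy estimate. Unlike the renormalization identity, no Taylor expansion enters here; the obstacle is purely the bookkeeping of this exact cancellation together with the verification that the boundary terms vanish.
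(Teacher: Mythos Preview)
Your proof is correct and follows essentially the same route as the paper: a summation-by-parts step converting the centered difference on $\up(\vr^k\widehat u^k u^k)$ into $-\Delta x\sum_i \up(\vr^k\widehat u^k u^k)_{i+1/2}\,\partial_{i+1/2}\widehat u^k$, followed by the same add-and-subtract algebraic identity that splits this into $\up(\vr^k u^k)_{i+1/2}\,\partial_{i+1/2}(|\widehat u^k|^2/2)$ plus the signed remainder $\mathcal{N}_2$. Your explicit verification of $\vr_i(u_{i+1/2}^k)^+-\vr_{i+1}(u_{i+1/2}^k)^-=|\up(\vr^k u^k)_{i+1/2}|$ and your derivation of the second equality via summation by parts together with the continuity scheme \eqref{num:cont} make explicit two steps the paper leaves implicit, but the argument is the same.
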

\begin{proof}
	By applying summation by parts, we see that
	\begin{equation}\label{conv:1}
		\begin{split}
		&\Delta x \sum_{i=0}^{N-1}\left(\frac{\up(\vr^k \widehat u^k u^{k})_{i+3/2} -\up(\vr^k \widehat u^k u^{k})_{i-1/2}}{2\Delta x}\right)u_{i+1/2}^k	 \\
		&\qquad = -\Delta x \sum_{i=0}^{N-1}\up(\vr^k \widehat u^k u^{k})_{i+1/2}\left(\frac{u_{i+3/2}^k - u_{i-1/2}^k}{2\Delta x}\right) \\
		&\qquad = -\Delta x \sum_{i=0}^{N-1}\up(\vr^k \widehat u^k u^{k})_{i+1/2}~\partial_{i+1/2}\widehat u_h^k.
		\end{split}
	\end{equation}
	Next, we apply the definition of $\up(\cdot)$ and add and subtract to deduce
	\begin{equation*}
		\begin{split}
			&\up(\vr^k \widehat u^k u^{k})_{i+1/2}\partial_{i+1/2}\widehat u_h^k \\
			&\qquad = \left[(\vr^k_i\widehat u^k_i) u^{k,+}_{i+1/2} 
			+ (\vr^k_{i+1}\widehat u^k_{i+1}) u_{i+1/2}^{k,-}\right]~\partial_{i+1/2}\widehat u_h^k \\
			&\qquad = \frac{1}{2}[\vr^k_iu^{k,+}_{i+1/2} + \vr^k_{i+1}u^{k,-}_{i+1/2}]\left(\widehat u_i^k + \widehat u_{i+1}^k\right)\partial_{i+1/2}\widehat u_h^k \\
			&\qquad \qquad \quad  + \frac{1}{2}[\vr^k_iu^{k,+}_{i+1/2} + \vr^k_{i+1}u^{k,-}_{i+1/2}]\left(\widehat u_i^k - \widehat u_{i+1}^k\right)\partial_{i+1/2}\widehat u_h^k \\
			&\qquad  = \up\left(\vr^k u^{k}\right)_{i+1/2}\partial_{i+1/2}~\left(\frac{|\widehat u^k|^2}{2}\right) \\
			&\qquad \qquad\quad - \frac{\Delta x}{2}\left|\up\left(\vr^k u^{k}\right)_{i+1/2}\right|\left|\partial_{i+1/2} \widehat u^k\right|^2
		\end{split}
	\end{equation*}
	We then conclude the proof by setting this identity in \eqref{conv:1}.
\end{proof}

\subsection{Energy estimate}
We are now ready to prove stability of the method.
\begin{proposition}\label{lem:energy}
Let $\{(\vr^k_i, u^k_{i-1/2})\}_{i,k}$,
be the numerical solution obtained through Definition \ref{def:scheme}. 
The following stability estimate holds,
	\begin{align}
			&\max_m\left(\Delta x \sum_i \vr_i^m \frac{|\widehat u_i^m|^2}{2} + \frac{1}{\gamma-1}p(\vr_i^m)\right) \nonumber\\
			&\qquad + \Delta t \Delta x \sum_{k=1}^M\sum_{i}\left|\partial_{i+1/2}u_h^k\right|^2 
			 + \sum_{i=1}^4 \mathcal{N}_i \label{eq:energy}\\
			&\qquad=\Delta x \sum_i \vr_i^0 \frac{|\widehat u_i^0|^2}{2} + \frac{1}{\gamma-1}p(\vr_i^0) \nonumber,
	\end{align}
	where the numerical diffusion terms are
	\begin{align*}
		\mathcal{N}_1 &= (\Delta t)^2\Delta x \sum_{k=1}^M\sum_i p''\left(\vr_i^\ddagger\right)|\partial_t^k \vr_i|^2\\
		\mathcal{N}_2 &= \Delta t(\Delta x)^2\sum_{k=1}^M p''\left(\vr_\dagger^k\right)\left|\partial_{i+1/2}\vr^k\right|^2\left|u^k_{i+1/2}\right| \\
		\mathcal{N}_3 &=  (\Delta t)^2\Delta x\sum_{k=1}^M \sum_i\frac{\vr_{i}^{k-1}}{2}\left|\partial_t^k\widehat u_i \right|^2\\
		\mathcal{N}_4 &= \frac{\Delta t(\Delta x)^2 }{2}\sum_{k=1}^M\sum_{i=0}^{N-1}\left|\up\left(\vr^k u^{k}\right)_{i+1/2}\right|\left|\partial_{i+1/2} \widehat u^k\right|^2.
	\end{align*}
\end{proposition}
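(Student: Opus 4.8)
The plan is to mimic the continuous energy method: multiply the momentum scheme \eqref{num:moment} by the velocity $u_{i+1/2}^k$, sum over $i$ against $\Delta x$, then multiply by $\Delta t$ and sum over $k$, identifying each of the four contributions (discrete accumulation, convection, viscosity, pressure) with a telescoping energy plus a nonnegative numerical-diffusion remainder. The choice that makes the pressure term close is to run the renormalized scheme of Lemma \ref{lem:renorm} with $B(z)=\tfrac{1}{\gamma-1}p(z)=\tfrac{a}{\gamma-1}z^\gamma$; a direct computation gives $b(z)=zB'(z)-B(z)=p(z)$, so this $B$ is exactly the discrete pressure potential, and $B''(z)=a\gamma z^{\gamma-2}>0$ guarantees that the remainder $\mathcal{P}$ is dissipative.

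First I would treat the accumulation term. Writing $m_{i+1/2}=\tfrac12(\vr_i\widehat u_i+\vr_{i+1}\widehat u_{i+1})$ and using $\widehat u_i=\tfrac12(u_{i-1/2}+u_{i+1/2})$, a reindexing (shift by one cell, boundary terms killed by $u_{-1/2}^k=u_{N-1/2}^k=0$) collapses the staggered average back onto the density grid:
$$
\Delta x\sum_i(\partial_t^k m_{i+1/2})u_{i+1/2}^k=\Delta x\sum_i\partial_t^k(\vr_i\widehat u_i)\,\widehat u_i^k .
$$
The elementary discrete product rule $(ab-a'b')b=\tfrac12(ab^2-a'b'^2)+\tfrac12 a'(b-b')^2+\tfrac12(a-a')b^2$, applied with $a=\vr^k,\ b=\widehat u^k$, then produces the kinetic energy $\partial_t^k(\tfrac12\vr_i|\widehat u_i|^2)$, the quadratic time-difference $\tfrac{\Delta t}{2}\vr_i^{k-1}|\partial_t^k\widehat u_i|^2$ (which becomes $\mathcal{N}_3$ after the final $\Delta t\sum_k$), and a cross term $\tfrac12|\widehat u_i^k|^2\partial_t^k\vr_i$.

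Next I would invoke Lemma \ref{lem:conv}, which rewrites the convection contribution as $-\Delta x\sum_i\partial_t^k\vr_i\tfrac{|\widehat u^k|^2}{2}$ plus a numerical diffusion whose time-sum is exactly $\mathcal{N}_4$; the first piece cancels the leftover cross term from the accumulation step identically. The viscous term is handled by summation by parts against $\Delta_{i+1/2}=\partial_{i+1/2}\partial_i$, giving $-\mu\Delta x\sum_i|\partial_{i+1/2}u_h^k|^2$ with no remainder. For the pressure I would integrate by parts to get $\Delta x\sum_i p(\vr_i^k)\partial_i u^k$ and substitute the renormalized identity \eqref{num:renorm}: since $\partial_i\up(B(\vr^k)u^k)$ telescopes to zero under $\Delta x\sum_i$ (again by the no-slip values), this becomes $-\Delta x\sum_i\partial_t^k B(\vr_i)-\Delta x\sum_i\mathcal{P}[B(\vr_i^k),u^k]$, where the time-Taylor part of $\mathcal{P}$ yields $\mathcal{N}_1$ and the two upwind density-difference parts yield $\mathcal{N}_2$, nonnegative because $-u^-=|u^-|$, $u^+=|u^+|$ and $B''>0$.

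Finally, multiplying through by $\Delta t$ and summing $k=1,\dots,M$ telescopes the two time derivatives into the differences of kinetic energy and of the internal energy $\tfrac{1}{\gamma-1}p(\vr_i)$ between levels $M$ and $0$, and collects $\mathcal{N}_1,\dots,\mathcal{N}_4$; rearranging produces the stated identity at level $M$, and since every dissipative term is nonnegative the same bound holds with $\max_m$ in place of the level-$M$ energy. I expect the main obstacle to be the bookkeeping on the staggered grid: the accumulation term lives on the velocity nodes while both energies live on the density nodes, so the reindexing and the exact cancellation of the $\tfrac12|\widehat u^k|^2\partial_t^k\vr_i$ cross term against Lemma \ref{lem:conv} must be executed carefully, and one must verify that all summation-by-parts boundary contributions vanish through the no-slip condition. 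The positivity of the four $\mathcal{N}_i$ is then a routine consequence of the convexity of $z\mapsto z^2$ and of $B$.
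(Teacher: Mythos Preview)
Your proposal is correct and follows essentially the same route as the paper's proof: multiply \eqref{num:moment} by $u_{i+1/2}^k\Delta x$, sum, invoke Lemma~\ref{lem:conv} for the convection term and Lemma~\ref{lem:renorm} with $B(z)=\tfrac{1}{\gamma-1}p(z)$ for the pressure term, and use the two identities you state (the reindexing and the discrete product rule) to convert the accumulation term into $\partial_t^k(\tfrac12\vr_i|\widehat u_i|^2)$ plus $\mathcal{N}_3$ plus the cross term that cancels against Lemma~\ref{lem:conv}. The paper records exactly these two identities as \eqref{en:1}--\eqref{en:2} and then telescopes in $k$, so your outline matches the paper step for step.
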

\begin{proof}
We begin by multiplying \eqref{num:moment} by $u_{i+1/2}^k\, \Delta x$ 
and sum over all $i$ to obtain
\begin{equation*}
	\begin{split}
		&\Delta x \sum_i \partial_t^k\left(\frac{\vr_i \widehat u_i + \vr_{i+1} \widehat u_{i+1}}{2}\right)u^k_{i+1/2} \\
		&\qquad = - \Delta x \sum_i\left(\frac{\up(\vr^k \widehat u^k u^{k})_{i+3/2} -\up(\vr^k \widehat u^k u^{k})_{i-1/2}}{2\Delta x}\right)u_{i+1/2}^k \\
		&\qquad \qquad -\Delta x \sum_i \left|\partial_{i+1/2}u_h^k\right|^2  - p(\vr_i^k)\partial_{i}u^k_h,
	\end{split}
\end{equation*}
where we have also applied summation by parts together with the boundary condition $u^k_{-1/2} = u^k_{N-1/2} = 0$. 
Next, we apply Lemma \ref{lem:conv} and Lemma \ref{lem:renorm} (with $B(z) = \frac{1}{\gamma-1}p(z)$)
to deduce
\begin{equation}\label{eq:prev}
	\begin{split}
		&\Delta x \sum_i \partial_t^k\left(\frac{\vr_i \widehat u_i + \vr_{i+1} \widehat u_{i+1}}{2}\right)u^k_{i+1/2}
		-\partial_t^k \vr_i\left(\frac{|\widehat u^k|^2}{2}\right) \\
		&\qquad = - \mathcal{N}_2
		 -\Delta x \sum_i \left|\partial_{i+1/2}u_h^k\right|^2  - \frac{1}{\gamma-1}\partial_t^k p(\vr_i)  - \mathcal{P}[p(\vr), u].
	\end{split}
\end{equation}
To proceed, we observe the following identities 
\begin{equation}\label{en:1}
	\Delta x \sum_i \partial_t^k\left(\frac{\vr_i \widehat u_i + \vr_{i+1} \widehat u_{i+1}}{2}\right)u^k_{i+1/2}
	= \Delta x \sum_i \partial_t^k\left(\vr_i \widehat u_i\right)\widehat u^k_i,
\end{equation}
and
\begin{equation}\label{en:2}
	\begin{split}
		&\partial_t^k\left(\vr_i \widehat u_i\right)\widehat u^k_i- \partial_t^k \vr_i\left(\frac{|\widehat u^k|^2}{2}\right)  \\
		&\qquad = \partial_t^k \left(\vr_i \frac{|\widehat u_i|^2}{2}\right) + \frac{\vr_{i}^{k-1}}{2\Delta t}\left|\widehat u_i^k -\widehat u^{k-1}_i \right|^2.
	\end{split}
\end{equation}
By applying \eqref{en:1}-\eqref{en:2} in  \eqref{eq:prev}, we deduce
\begin{equation*}
	\begin{split}
		&\Delta x \sum_i \partial_t^k \left(\vr_i \frac{|\widehat u_i|^2}{2}\right) + \frac{\vr_{i}^{k-1}}{2\Delta t}\left|\widehat u_i^k -\widehat u^{k-1}_i \right|^2
		+ \frac{1}{\gamma-1}\partial_t^k p(\vr_i) \\
		&\qquad + \Delta x \sum_i \left|\partial_{i+1/2}u_h^k\right|^2 + \mathcal{N}_2 + \mathcal{P}[p(\vr), u] =0.
	\end{split}
\end{equation*}
We conclude by multiplying with $\Delta t$, summing over $k=1, \ldots, M$,
and recalling the definition of $\mathcal{P}[\cdot]$.
\end{proof}

The previous stability estimate also provides 
some uniform integrability estimates on various quantities.
To state these, let us introduce the notation
$$
f_h \inb L^p(Q),
$$
to denote the case when $f_h$ is bounded in $L^p$ uniformly 
with respect to both $\Delta t$ and $\Delta x$.

\begin{corollary}\label{cor:energy}
Let $(\vr_h, u_h)$ be the numerical solution constructed through Definition \ref{def:scheme}
and \eqref{def:ext1}-\eqref{def:ext2}. Then, 
\begin{equation*}
	\vr_h \inb L^\infty(0,T;L^\gamma(\Om)), 
	\quad p(\vr_h) \inb L^\infty(0,T;L^1(\Om)),
\end{equation*}
\begin{equation*}
	u_h \in L^2(0,T;W^{1,2}_0(\Om))\cap L^2(0,T;L^\infty(\Om)).
\end{equation*}

As a consequence, 
\begin{equation*}
	\vr_h \widehat u_h \inb L^\infty(0,T;L^\frac{2\gamma}{\gamma+1}(\Om)),
	\quad 
	\vr_h  |\widehat u_h|^2 \inb L^\infty(0,T;L^1(\Om)),
\end{equation*}
\begin{equation*}
	\begin{split}
		\vr_h  u_h \inb L^2(0,T;L^\gamma(\Om), \quad 
		\vr_h  \left| u_h\right|^2 \inb L^2(0,T;L^\frac{2\gamma}{\gamma+1}(\Om)).
	\end{split}
\end{equation*}
\end{corollary}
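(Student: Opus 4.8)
The plan is to extract every claimed bound from the single energy identity \eqref{eq:energy}, once its right-hand side has been shown to be bounded uniformly in $h$ and all the numerical diffusion terms have been discarded. For the right-hand side I first note that $\widehat u_i^0$ is an average of nodal values of $u_0$, so $|\widehat u_i^0|\le\norm{u_0}_{L^\infty(\Om)}$, while Jensen's inequality applied to the cell averages $\vr_i^0$ gives $\Delta x\sum_i(\vr_i^0)^\gamma\le\norm{\vr_0}_{L^\gamma(\Om)}^\gamma$ and $\Delta x\sum_i\vr_i^0=\norm{\vr_0}_{L^1(\Om)}$; hence the discrete initial energy is dominated by a constant $E_0$ depending only on the data. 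Since $p(z)=az^\gamma$ satisfies $p''\ge0$ on $[0,\infty)$ for $\gamma>1$ and the scheme preserves strict positivity $\vr_i^{k-1}>0$, each of $\mathcal{N}_1,\dots,\mathcal{N}_4$ is nonnegative and may be dropped, turning \eqref{eq:energy} into a uniform bound on the stored energy and the dissipation.

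The four primary bounds then follow by rewriting the surviving discrete sums as integrals of the extensions \eqref{def:ext1}-\eqref{def:ext2}. As $\vr_h$ is piecewise constant, $\Delta x\sum_i p(\vr_i^m)=\norm{p(\vr_h(t^m))}_{L^1(\Om)}$, so taking the maximum over $m$ yields $p(\vr_h)\inb L^\infty(0,T;L^1(\Om))$ and therefore $\vr_h\inb L^\infty(0,T;L^\gamma(\Om))$, because $\norm{\vr_h}_{L^\gamma}^\gamma=a^{-1}\norm{p(\vr_h)}_{L^1}$. Since $\widehat u_h$ is precisely the cell average of the piecewise linear $u_h$, the kinetic term gives $\vr_h|\widehat u_h|^2\inb L^\infty(0,T;L^1(\Om))$. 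The dissipation term equals $\norm{\partial_x u_h}_{L^2(\Dom)}^2$, which together with the homogeneous nodal values $u_h(t,0)=u_h(t,L)=0$ gives $u_h\in L^2(0,T;W^{1,2}_0(\Om))$. Finally, in one space dimension $|u_h(t,x)|=\left|\int_0^x\partial_y u_h(t,y)\,dy\right|\le\sqrt{L}\,\norm{\partial_x u_h(t,\cdot)}_{L^2(\Om)}$ by Cauchy-Schwarz and $u_h(t,0)=0$; squaring and integrating in time gives $u_h\in L^2(0,T;L^\infty(\Om))$.

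The two momentum bounds not involving $|u_h|^2$ are immediate from Hölder's inequality. Writing $\vr_h\widehat u_h=\vr_h^{1/2}\cdot(\vr_h^{1/2}\widehat u_h)$ and using $\vr_h^{1/2}\inb L^\infty(0,T;L^{2\gamma}(\Om))$ together with $\vr_h^{1/2}\widehat u_h\inb L^\infty(0,T;L^2(\Om))$ gives, since $\tfrac{1}{2\gamma}+\tfrac12=\tfrac{\gamma+1}{2\gamma}$, that $\vr_h\widehat u_h\inb L^\infty(0,T;L^{2\gamma/(\gamma+1)}(\Om))$. Likewise $\norm{\vr_h u_h}_{L^\gamma(\Om)}\le\norm{\vr_h}_{L^\gamma(\Om)}\norm{u_h}_{L^\infty(\Om)}$, and integrating the square in time against $\vr_h\in L^\infty(0,T;L^\gamma(\Om))$ and $u_h\in L^2(0,T;L^\infty(\Om))$ produces $\vr_h u_h\inb L^2(0,T;L^\gamma(\Om))$.

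The last assertion, $\vr_h|u_h|^2\inb L^2(0,T;L^{2\gamma/(\gamma+1)}(\Om))$, is where I expect the real work. A direct estimate $\norm{\vr_h|u_h|^2}_{L^{2\gamma/(\gamma+1)}}\le\norm{\vr_h u_h}_{L^\gamma}\norm{u_h}_{L^\infty}$ only places the quantity in $L^1(0,T)$, one power short of the claimed $L^2(0,T)$. To recover it I would interpolate, writing
\begin{equation*}
	\norm{\vr_h|u_h|^2}_{L^{2\gamma/(\gamma+1)}(\Om)}=\norm{\vr_h^{1/2}u_h}_{L^{4\gamma/(\gamma+1)}(\Om)}^2\le\norm{\vr_h^{1/2}u_h}_{L^2(\Om)}\,\norm{\vr_h^{1/2}u_h}_{L^{2\gamma}(\Om)},
\end{equation*}
the interpolation exponent being exactly $\tfrac12$. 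The second factor lies in $L^2(0,T)$ as above, so the issue is to bound the first factor, $\int_0^L\vr_h|u_h|^2\,dx$, uniformly in $t$. Here the obstacle is that \eqref{eq:energy} controls only the averaged velocity $\widehat u_h$, whereas a direct computation on each cell gives
\begin{equation*}
	\int_0^L\vr_h|u_h|^2\,dx=\int_0^L\vr_h|\widehat u_h|^2\,dx+\frac{(\Delta x)^2}{12}\int_0^L\vr_h|\partial_x u_h|^2\,dx.
\end{equation*}
The first term is the kinetic energy, bounded in $L^\infty(0,T)$; the correction carries a single-time-level factor $\norm{\partial_x u_h}_{L^2(\Om)}^2$ that \eqref{eq:energy} bounds only after summation over the time levels. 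I would dispose of it with the inverse estimate $\norm{\vr_h}_{L^\infty(\Om)}\le(\Delta x)^{-1/\gamma}\norm{\vr_h}_{L^\gamma(\Om)}$, which bounds the correction by $C(\Delta x)^{2-1/\gamma}(\Delta t)^{-1}E_0\norm{\vr_h}_{L^\gamma(\Om)}$; under the mesh relation $\Delta t=\Delta x$ of Theorem \ref{thm:main} this tends to zero, so $\vr_h^{1/2}u_h\inb L^\infty(0,T;L^2(\Om))$ and the interpolation closes. Reconciling the piecewise linear $u_h$ with its cell average $\widehat u_h$ in this final step is the crux of the corollary.
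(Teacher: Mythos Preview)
Your argument is correct and follows the same route the paper sketches---the paper's own proof is a two-line appeal to Proposition~\ref{lem:energy} followed by ``Sobolev embedding and interpolation estimates'', and what you have written is a careful execution of exactly that. In particular, your interpolation
\[
\norm{\vr_h|u_h|^2}_{L^{2\gamma/(\gamma+1)}(\Om)}\le\norm{\vr_h^{1/2}u_h}_{L^2(\Om)}\,\norm{\vr_h^{1/2}u_h}_{L^{2\gamma}(\Om)}
\]
is the intended mechanism for the final bound; the paper simply does not spell it out.

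You are also right that there is a genuine wrinkle the paper glosses over: the energy identity \eqref{eq:energy} controls $\vr_h|\widehat u_h|^2$, not $\vr_h|u_h|^2$, so the factor $\norm{\vr_h^{1/2}u_h}_{L^2(\Om)}$ is not immediately bounded in $L^\infty(0,T)$. Your cell-by-cell identity and the inverse estimate dispose of the discrepancy under the standing assumption $\Delta t\sim\Delta x$ (stated at the start of Section~2 and used throughout), which is all the paper ever needs. So your ``crux'' is real, your resolution is sound, and the paper's terse proof implicitly relies on exactly the computation you supply.
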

\begin{proof}
From Proposition \ref{lem:energy}, we have that $\vr_h \inb L^\infty(0,T;L^\gamma(\Om))$,
$u_h \inb L^2(0,T;W^{1,2}_0(\Om))$, and $\vr_h \widehat u_h^2 \inb L^\infty(0,T;L^1(\Om))$.
The remaining bounds follows directly from these using Sobolev embedding
and interpolation estimates.
\end{proof}

\subsection{Control in time}
We end this section by establishing some 
weak control on the time-derivatives. 
For this purpose, let 
\begin{equation*}
	\partial_t^h f_h = \frac{f_h(\cdot) - f_h(\cdot -\Delta t)}{\Delta t}
	= \frac{f^k - f^{k-1}}{\Delta t}, \quad t \in [k \Delta t , (k+1)\Delta t),
\end{equation*}
for all $k = 1, \ldots, M$.

\begin{lemma}\label{lem:timecont}
Let $(\vr_h, u_h)$ be the numerical solution 
constructed through Definition \ref{def:scheme} 
and \eqref{def:ext1}-\eqref{def:ext2}. Then,
\begin{align}
	\partial_t^h\vr_h &\inb L^2\left(0,T;W^{-1, \gamma}(\Om)\right), \label{time:cont}\\
	\partial_t^h(\vr_h \widehat u_h) &\inb L^1\left(0,T;W^{-1, \gamma}(\Om)\right). \label{time:moment}
\end{align}
\end{lemma}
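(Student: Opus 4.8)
The plan is to use that $W^{-1,\gamma}(\Om)$ is the dual of $W^{1,\gamma'}_0(\Om)$ with $\gamma'=\gamma/(\gamma-1)\in(2,3)$, so that both \eqref{time:cont} and \eqref{time:moment} reduce to a pointwise-in-time bound $\left|\int_0^L\partial_t^h f_h\,\phi\,\dx\right|\le c(t)\,\|\phi\|_{W^{1,\gamma'}_0(\Om)}$, with $c\in L^2(0,T)$ in the first case and $c\in L^1(0,T)$ in the second. Because the extensions are piecewise constant in space, $\int_0^L\partial_t^h f_h\,\phi\,\dx=\Delta x\sum_i\partial_t^k f_i\,\bar\phi_i$, where $\bar\phi_i$ is the cell average of $\phi$, and the discrete gradient $\partial_{i+1/2}\bar\phi$ is controlled by $\|\phi\|_{W^{1,\gamma'}_0(\Om)}$ in the discrete $L^{\gamma'}$ norm. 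The whole strategy is to move the discrete time difference onto the spatial fluxes of the scheme by summation by parts and then apply a discrete H\"older inequality.

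For \eqref{time:cont} I would substitute the continuity scheme \eqref{num:cont} as $\partial_t^k\vr_i=-\partial_i\up(\vr^k u^k)$ and sum by parts, which yields $\Delta x\sum_i\up(\vr^k u^k)_{i+1/2}\,\partial_{i+1/2}\bar\phi$. Estimating this by $\|\up(\vr^k u^k)\|_{L^\gamma(\Om)}\,\|\partial_{i+1/2}\bar\phi\|_{L^{\gamma'}}$ and using $|\up(\vr u)_{i+1/2}|\le(\vr_i+\vr_{i+1})\,|u_{i+1/2}|$ together with $\vr_h\inb L^\infty(0,T;L^\gamma(\Om))$ and $u_h\in L^2(0,T;L^\infty(\Om))$ from Corollary \ref{cor:energy}, the coefficient is seen to lie in $L^2(0,T)$. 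This settles the density, the easier of the two.

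For \eqref{time:moment} there is first a staggering subtlety: \eqref{num:moment} evolves the node-centered momentum $\tfrac12(\vr_i\widehat u_i+\vr_{i+1}\widehat u_{i+1})$, whereas the claim concerns the cell-centered $\vr_h\widehat u_h$. The re-indexing identity \eqref{en:1} is exactly the bridge: the algebra behind it shows that testing the scheme against the nodal samples of $\phi$ turns the node-centered momentum into $\Delta x\sum_i\partial_t^k(\vr_i\widehat u_i)\,\widehat v_i$, and $\widehat v_i$ agrees with $\bar\phi_i$ up to a quadrature error controlled by the same bounds. With the scheme inserted, the convective term becomes, through the summation by parts of Lemma \ref{lem:conv}, $\Delta x\sum_i\up(\vr^k\widehat u^k u^k)_{i+1/2}\,\partial_{i+1/2}v$; from $|\up(\vr\widehat u u)_{i+1/2}|\le\|u_h\|_{L^\infty(\Om)}^2(\vr_i+\vr_{i+1})$ this is bounded by $\|u_h\|_{L^\infty(\Om)}^2\,\|\vr_h\|_{L^\gamma(\Om)}\,\|\partial v\|_{L^{\gamma'}}$, whose coefficient lies in $L^1(0,T)$. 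The diffusion term reduces to a discrete Dirichlet pairing $\mu\Delta x\sum_i\partial_i u^k\,\partial_i v$, bounded by $\mu\|\partial_x u_h\|_{L^2(\Om)}\,\|\partial v\|_{L^2}$; since $u_h\in L^2(0,T;W^{1,2}_0(\Om))$ and $\gamma'\ge2$ gives $\|\partial v\|_{L^2}\lesssim\|\phi\|_{W^{1,\gamma'}_0(\Om)}$ on the bounded interval, its coefficient even lies in $L^2(0,T)$.

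The genuine obstacle is the pressure. After summation by parts it reads $\Delta x\sum_i p(\vr_i^k)\,\partial_i v$, which H\"older bounds by $\|p(\vr_h)\|_{L^\gamma(\Om)}\,\|\partial v\|_{L^{\gamma'}}$ and therefore demands $p(\vr_h)=a\vr_h^\gamma\inb L^1(0,T;L^\gamma(\Om))$, i.e.\ $\vr_h$ bounded in $L^{\gamma^2}$ in space. This is strictly stronger than what Proposition \ref{lem:energy} provides, whose only pressure control is $p(\vr_h)\inb L^\infty(0,T;L^1(\Om))$: the energy estimate alone puts the pressure gradient merely in $W^{-1,1}$, not in the smaller space $W^{-1,\gamma}$. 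This single term cannot be closed by the stability bounds, and it is precisely the point at which the improved higher integrability of the density (and, with it, the restriction $\tfrac32<\gamma<2$) must be invoked. Granting $p(\vr_h)\inb L^1(0,T;L^\gamma(\Om))$, the pressure contributes an $L^1(0,T)$ coefficient, and collecting the four terms gives \eqref{time:moment}.
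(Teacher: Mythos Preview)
Your argument for \eqref{time:cont} matches the paper's proof exactly: test the continuity scheme against cell averages of $\phi$, sum by parts, and bound the upwind flux via $\vr_h\inb L^\infty_tL^\gamma_x$ and $u_h\inb L^2_tL^\infty_x$.

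For \eqref{time:moment} your diagnosis of the pressure obstruction is sharp, but your proposed resolution does not close. You ask for $p(\vr_h)\inb L^1(0,T;L^\gamma(\Om))$, i.e.\ $\vr_h$ bounded in $L^{\gamma^2}$ in space. The higher-integrability lemma in Section~4 only yields $\vr_h\inb L^{\gamma+1}$, equivalently $p(\vr_h)\inb L^{(\gamma+1)/\gamma}$; since $\gamma^2>\gamma+1$ for every $\gamma>\tfrac{1+\sqrt5}{2}$, and in particular throughout $(\tfrac32,2)$, the bound you need is strictly stronger than anything the paper supplies. There is also a structural issue: this time-control lemma is placed in Section~3, before and independent of the $\gamma$-restricted higher-integrability argument, and it is used in Lemma~\ref{lem:product} without any appeal to higher integrability.

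The paper resolves the pressure term differently: it tests the momentum scheme against $v\in W^{1,\infty}_0(\Om)$ rather than $W^{1,\gamma'}_0$, so that after summation by parts the pressure contribution is estimated by $\|p(\vr_h^k)\|_{L^1(\Om)}\|v_x\|_{L^\infty(\Om)}$, which \emph{is} controlled by the energy estimate alone. The convective and diffusive terms are handled essentially as you describe (and the staggering is absorbed exactly through the re-indexing you point to). The upshot is a bound of the form
\[
\int_0^T\Bigl|\int_0^L\partial_t^h(\vr_h\widehat u_h)\,v\,\dx\Bigr|\,\dt\ \le\ C\,\|v_x\|_{L^\infty(\Om)},
\]
i.e.\ control against Lipschitz test functions. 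The target space $W^{-1,\gamma}$ written in the statement is thus optimistic; what the proof actually delivers---and what the only consumer of this bound, the Aubin--Lions-type Lemma~\ref{lemma:aubin}, requires---is merely $\partial_t^h(\vr_h\widehat u_h)\inb L^1(0,T;W^{-1,1}(\Om))$, which is exactly how the result is quoted in the proof of Lemma~\ref{lem:product}.
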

\begin{proof}

1. We first prove \eqref{time:cont}.
Let $\phi \in C^\infty(\Om)$ be arbitrary 
and define 
\begin{equation*}
	\phi_i = \frac{1}{\Delta x}\int_{x_{i-1/2}}^{x_{i+1/2}}
	\phi(y)~dy, \quad i=0, \ldots, N-1.
\end{equation*}
Now, multiply the continuity scheme \eqref{num:cont} with $\Delta x \phi_i$, 
sum over all $i$, and apply summation by parts, to deduce
\begin{equation}\label{time:eq}
	\begin{split}
		\Delta x\sum_i 
		\left(\partial_t^k {\vr_i^k}\right)\phi_i 
		&=   \Delta x\sum_i 
		\up(\vr^k u^k)_{i+1/2}\left(\frac{\phi_{i+1} - \phi_i}{\Delta x}\right) \\
		&\leq C
		\|\vr_h\|_{L^\gamma(\Om)}
		\|u_h\|_{L^\infty(\Om)}
		\|\phi_x\|_{L^\frac{\gamma}{\gamma-1}(\Om)}.
	\end{split}
\end{equation}
The last inequality is an application of the H\"older inequality.
Now, since \eqref{time:eq} holds for all $\phi$, we can conclude 
that it continues to hold for all $\phi \in W^{1,\frac{\gamma}{\gamma-1}(\Om)}$
By multiplying with $\Delta t$ and summing over all k,
we obtain 
\begin{equation*}
	\begin{split}
		&\left\|\partial_t^k\vr_h\right\|_{L^2(0,T;W^{-1, \gamma}(\Om))} 
		 \leq \|\vr_h\|_{L^\infty(0,T;L^\gamma(\Om))}
		\|u_h\|_{L^2(0,T;L^\infty(\Om))},
	\end{split}
\end{equation*} 
which is \eqref{time:cont}.

2. Let $\phi \in W^{1,\infty}_0(\Om)$ be arbitrary and define
$$
v_{i-1/2} = v(x_{i-1/2}), \quad i=0, \ldots, N.
$$
By multiplying the momentum scheme \eqref{num:moment}
with $\Delta x v_{i+1/2}$, summing over $i$, and 
applying summation by parts, we obtain
\begin{equation*}
	\begin{split}
		&\Delta x\sum_i \partial_t^k(\vr_i\widehat u_i
		+\vr_{i+1}\widehat u_{i+1})v_{i+1/2} \\
		&\quad = \Delta x\sum_i \up(\vr^k \widehat u^k u^k)
		\left(\frac{v_{i+3/2} -v_{i-1/2}}{2\Delta x}\right) \\
		&\qquad +\Delta x\sum_i \mu\partial_i u^k
		\left(\frac{v_{i+1/2}-v_{i-1/2}}{\Delta x}\right)
		- p(\vr_i^k)\left(\frac{v_{i+1/2}-v_{i-1/2}}{\Delta x}\right) \\
		&\quad\leq C\|\vr^k_h\|_{L^\gamma(\Om)}
		\|u^k_h\|_{L^\infty(\Om)}^2\left\|v_x\right\|_{L^\frac{\gamma}{\gamma-1}(\Om)} \\
		&\qquad \quad + \mu \|(u^k_h)_x\|_{L^2(\Om)}
		\left\|v_x\right\|_{L^2(\Om)} + \|p(\vr^k_h)\|_{L^\infty(\Om)}
		\left\|v_x\right\|_{L^\infty(\Om)}.
	\end{split}
\end{equation*}
Next, we multiply with $\Delta t$ and sum over all $k$ to conclude
\begin{equation*}
	\begin{split}
		&\Delta t\sum_k \left|\Delta x\sum_i \partial_t^k(\vr_i\widehat u_i
		+\vr_{i+1}\widehat u_{i+1})v_{i+1/2}\right| \\
		& \leq C\|v_x\|_{L^\infty(\Om)}
		\left(\|u_h\|_{L^2(0,T;L^\infty(\Om))}^2\|\vr_h\|_{L^\infty(0,T;L^\gamma(\Om))}\right. \\
		&\qquad \left. \|(u_h)_x\|_{L^2(0,T;L^2(\Om))}
		+ \|p(\vr_h)\|_{L^\infty(0,T;L^1(\Om))} \right) 
		 \leq C\|v_x\|_{L^\infty(\Om)},
	\end{split}
\end{equation*}
where the last inequality is Corollary \ref{cor:energy}.
Finally, we calculate
\begin{align}
		&\int_0^T\left|\int_0^L \partial_t^k(\vr_h \widehat u_h)v~dx\right|dt \nonumber\\
		&\qquad \leq
		\Delta t\Delta x\sum_k\left|\sum_i \partial_t^k(\vr_i\widehat u_i)
		\left(\frac{v_{i+1/2}+v_{i-1/2}}{2}\right)\right| \nonumber\\
		&\qquad \quad
		+\Delta t\sum_k\left|\sum_i \partial_t^k(\vr_i\widehat u_i)
		\int_{x_{i-1/2}}^{x_{i+1/2}}\left(\frac{v_{i+1/2}+v_{i-1/2}}{2}- v(x)\right)~ dx\right|\nonumber \\
		&\qquad =
		\Delta t\Delta x\sum_k\sum_i \partial_t^k(\vr_i\widehat u_i
		+\vr_{i+1}\widehat u_{i+1})v_{i+1/2} \nonumber\\
		&\qquad \quad
		+\Delta t\sum_k\left|\sum_i \partial_t^k(\vr_i\widehat u_i)
		\int_{x_{i-1/2}}^{x_{i+1/2}}\left(\frac{v_{i+1/2}+v_{i-1/2}}{2}- v(x)\right)~ dx\right|\nonumber \\
		&\qquad \leq C\left(1 + \|\vr_h\widehat u_h\|_{L^1(0,T;L^1(\Om))}
		\right)\|v_x\|_{L^\infty(\Om)}, \nonumber
\end{align}
form which \eqref{time:moment} follows.

\end{proof}

\section{The effective viscous flux}
The purpose of this section is to derive an 
equation for the quantity
$$
	\mathcal{F}_h = \mu(u_h)_x - p(\vr_h).
$$
This quantity is often termed the \emph{effective viscous flux}
and stands at the center of both the available existence results (in more than 1D) \cite{Lions:1998ga, Feireisl}
and the available numerical results (cf.~\cite{Karper}). 
Specifically, both higher (than $L^1$) integrability 
on the pressure and strong convergence of the density 
is proved using the upcoming equation (see Proposition \ref{pro:eff} for details).

To derive the desired equation, we shall need the following discrete Neumann Laplace operator 
\begin{equation*}
	\begin{split}
			-\Delta_{i}q_h &= -\partial_{i}\partial_{i+1/2} q_h = f_i, \\
			q_{N} &= q_{N-1}, \\
			q_{-1} &= q_1.
	\end{split}
\end{equation*}
and we observe that $-\Delta_i$ is nothing but the standard 3-point 
Laplacian on the density grid. Moreover, the Neumann condition 
is realized by adding a shadow cell on both sides of the domain $(0,L)$.
Due to the Neumann condition, $\Delta_i$ is only well-defined
for sources $f_h$ of zero mean.

In the upcoming analysis, we shall not need $\Delta_i$ directly, 
but  its discrete derivative:
\begin{equation*}
	\partial_{i+1/2}\Delta_i^{-1}\left[f_h^k\right] = -\partial_{i+1/2}q_h^k,
\end{equation*}
where we observe that the Neumann condition renders $\partial_{N-1/2}\Delta_i^{-1}\left[f_h^k\right] = 0$
and $\partial_{-1/2}\Delta_i^{-1}\left[f_h^k\right] = 0$.
We shall also need the inverse of the $\Delta_{i+1/2}$ operator
occurring in the momentum scheme \eqref{num:moment}:
\begin{equation*}
	-\Delta^{-1}_{i+1/2}[v_h] = w_{i+1/2}, 
\end{equation*}
where $w_h$ solves the linear Dirichlet  system
\begin{equation*}
	\begin{split}
		-\Delta_{i+1/2}w_h =-\partial_{i+1/2}\partial_i w_h  = v_{i+1/2}, \qquad w_{-1/2} = w_{N-1/2} = 0.
	\end{split}
\end{equation*}
We will mostly be interested in the discrete derivative
\begin{equation*}
	\partial_i \Delta_{i+1/2}^{-1}[v_h] = - \partial_i w_h,
\end{equation*}

The following result follows from standard summation by parts.
\begin{lemma}\label{lem:Aop}
The following duality holds,
	\begin{equation*}
		\begin{split}
				&\Delta x\sum_i v_{i+1/2}	\partial_{i+1/2}\Delta_i^{-1}\left[f_h\right] =
				 -\Delta x \sum_i \partial_i \Delta^{-1}_{i+1/2}[v_h] f_i.
		\end{split}
	\end{equation*}
\end{lemma}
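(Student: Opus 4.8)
The plan is to prove the duality identity by writing out both sides in terms of the underlying potentials and applying summation by parts twice, being careful to track the boundary contributions that arise from the Dirichlet data on one side and the Neumann data on the other. Let me set up notation: write $q_h = -\Delta_i^{-1}[f_h]$, so that $-\partial_i\partial_{i+1/2}q_h = f_i$ with the Neumann conditions $q_N = q_{N-1}$ and $q_{-1} = q_1$, and correspondingly $\partial_{i+1/2}\Delta_i^{-1}[f_h] = -\partial_{i+1/2}q_h$. On the other side, write $w_h = -\Delta_{i+1/2}^{-1}[v_h]$, so that $-\partial_{i+1/2}\partial_i w_h = v_{i+1/2}$ with the Dirichlet conditions $w_{-1/2} = w_{N-1/2} = 0$, and $\partial_i\Delta_{i+1/2}^{-1}[v_h] = -\partial_i w_h$.

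First I would rewrite the left-hand side. Since $v_{i+1/2} = -\partial_{i+1/2}\partial_i w_h$, substitute this in and perform one summation by parts to move the $\partial_{i+1/2}$ off $w_h$ and onto the factor $\partial_{i+1/2}q_h$; this turns the sum into $\Delta x \sum_i \partial_i w_h\,\partial_i(\partial_{i+1/2}q_h)$ up to boundary terms. Then I would recognize $\partial_i\partial_{i+1/2}q_h = -f_i$ and perform a second summation by parts to land on $-\Delta x\sum_i \partial_i\Delta_{i+1/2}^{-1}[v_h]\,f_i$, which is exactly the right-hand side. The clean way to organize this is to note that both sides equal the symmetric bilinear expression $\Delta x\sum_i \partial_i w_h\,\partial_{i+1/2}q_h$ (an interior ``energy'' pairing), so the identity amounts to showing that summing by parts from this symmetric form in the two possible directions reproduces the two given expressions.

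The main obstacle will be the boundary terms. Summation by parts on a grid of this staggered type produces endpoint contributions at $i = -1/2, N-1/2$ (for sums involving $w$ and $v$) and at $i = -1, N$ (for sums involving $q$ and $f$), and one must verify these all vanish. The Dirichlet conditions $w_{-1/2} = w_{N-1/2} = 0$ kill the boundary terms on the velocity side, while the Neumann conditions, which the excerpt records as $\partial_{N-1/2}\Delta_i^{-1}[f_h] = 0$ and $\partial_{-1/2}\Delta_i^{-1}[f_h] = 0$, i.e. $\partial_{N-1/2}q_h = \partial_{-1/2}q_h = 0$, kill the boundary terms on the density side. I would check each endpoint pairing explicitly and confirm that every surviving boundary product contains either a factor $w$ evaluated at $\pm 1/2$-type Dirichlet node or a factor $\partial q$ evaluated at a Neumann endpoint, so that the discrete integration by parts is genuinely free of boundary error. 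Once the boundary terms are dispatched, the interior computation is the routine shifting of indices described above, and the two expressions match.
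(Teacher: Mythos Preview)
Your proposal is correct and follows essentially the same route as the paper: substitute $v_{i+1/2}=\partial_{i+1/2}\partial_i\Delta_{i+1/2}^{-1}[v_h]$, perform summation by parts, and use $\partial_i\partial_{i+1/2}\Delta_i^{-1}[f_h]=f_i$, with the Dirichlet and Neumann data killing the boundary terms. One small over-statement: only a \emph{single} summation by parts is required, not two---after the first SBP and the substitution $\partial_i\partial_{i+1/2}q_h=-f_i$ you are already at the right-hand side (your ``symmetric bilinear'' intermediate form is also slightly off, since $\partial_i w_h$ and $\partial_{i+1/2}q_h$ live on different grids and cannot be paired directly).
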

\begin{proof}
By direct calculation, using the definition of $\Delta_i$ and $\Delta_{i+1/2}$, 
and the respective boundary conditions, we deduce
\begin{equation*}
	\begin{split}
		&\Delta x\sum_i v_{i+1/2}	\partial_{i+1/2}\Delta_i^{-1}\left[f_h\right]  \\
		&\qquad =\Delta x\sum_i \left(\partial_{i+1/2}\partial_i \Delta^{-1}_{i+1/2}[v_{h}]\right)\left(\partial_{i+1/2}\Delta_i^{-1}\left[f_h\right]\right) \\
		&\qquad=-\Delta x\sum_i \left(\partial_i\Delta^{-1}_{i+1/2}[v_{h}]\right)\left(\partial_i \partial_{i+1/2}\Delta^{-1}\left[f_h\right]\right) \\
		&\qquad=-\Delta x \sum_i \left(\partial_i\Delta^{-1}_{i+1/2}[v_{h}]\right)f_i,
	\end{split}
\end{equation*}
which concludes the proof.
\end{proof}

The following proposition gives
the effective viscous flux equation we shall need in the convergence analysis. 
The two error terms appearing in \eqref{eq:eff} will 
be bounded below.

\begin{proposition}\label{pro:eff}
Let $(\vr_h, u_h)$ be the numerical solution 
constructed through Definition  \ref{def:scheme} and \eqref{def:ext1}-\eqref{def:ext2}.
For any $m=1, \ldots, M$, we have that 
\begin{equation}\label{eq:eff}
	\begin{split}
		&-\Delta t \Delta x \sum_{k=0}^m\sum_i \left(\mu \partial_i u^k_h - p\left(\vr_i^k\right)\right)\left(\vr_i^k - \frac{\mathcal{M}}{L}\right) \\
		&\qquad =  \Delta t\Delta x \sum_{k=0}^m\sum_i \up(\vr^k \widehat u^k u^k)_{i+1/2}\left(\frac{\mathcal{M}}{L}\right)\\
		&\qquad \quad- \Delta x \sum_i\partial_{i}\Delta^{-1}_{i+1/2}\left[\frac{\vr^{m}_i \widehat u^{m}_i + \vr^{m}_{i+1} \widehat u^{m}_{i+1}}{2}\right]
		\vr^M_i\\
		&\qquad \quad + \Delta x \sum_i\partial_{i}\Delta^{-1}_{i+1/2}\left[\frac{\vr^{0}_i \widehat u^{0}_i + \vr^{0}_{i+1} \widehat u^{0}_{i+1}}{2}\right]
		\vr^1_i + E_1^m + E_2^m,
	\end{split}
\end{equation}	
where the numerical error terms $E_1^m$ and $E_2^m$  are given by \eqref{eff:E1} and \eqref{eff:E2}, respectively.
\end{proposition}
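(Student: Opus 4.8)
The plan is to reproduce, at the discrete level, the classical Lions--Feireisl derivation of the effective viscous flux identity, namely to test the momentum balance against the discrete analogue of $\nabla\Delta^{-1}(\vr-\overline{\vr})$. Concretely, I would multiply the momentum scheme \eqref{num:moment} by $\Delta x\,v_{i+1/2}^k$ and sum over $i$ and $k=0,\dots,m$, where
\[
    v_{i+1/2}^k := \partial_{i+1/2}\Delta_i^{-1}\Big[\vr_i^k-\tfrac{\M}{L}\Big].
\]
This is admissible because mass is conserved by \eqref{num:cont}, so $\Delta x\sum_i(\vr_i^k-\tfrac{\M}{L})=0$ and $\Delta_i^{-1}$ applies; moreover the Neumann conditions built into $\Delta_i^{-1}$ force $v_{-1/2}^k=v_{N-1/2}^k=0$, so $v^k$ is a legitimate homogeneous-Dirichlet field on the velocity grid.

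For the right-hand side of \eqref{num:moment}, summation by parts together with the defining relation $\partial_i\partial_{i+1/2}\Delta_i^{-1}[f]=-f$ turns the diffusion term into $\mu\,\Delta x\sum_i\partial_i u_h^k\,(\vr_i^k-\tfrac{\M}{L})$ and the pressure term into $-\Delta x\sum_i p(\vr_i^k)(\vr_i^k-\tfrac{\M}{L})$; added, these reproduce exactly the flux pairing $\Delta x\sum_i(\mu\partial_i u_h^k-p(\vr_i^k))(\vr_i^k-\tfrac{\M}{L})$ that sits on the left of \eqref{eq:eff}. The convective flux term is handled by summation by parts, which pairs $\up(\vr^k\widehat u^k u^k)_{i+1/2}$ with the centered difference of $v^k$; using $\partial_i v^k=-(\vr_i^k-\tfrac{\M}{L})$ this centered difference evaluates to $-\tfrac12(\vr_i^k+\vr_{i+1}^k)+\tfrac{\M}{L}$, so the convective contribution splits into the \emph{clean} term $\Delta t\Delta x\sum_k\sum_i\up(\vr^k\widehat u^k u^k)_{i+1/2}\tfrac{\M}{L}$ (the first term on the right of \eqref{eq:eff}) plus a momentum-moment term $\Delta x\sum_i\up(\vr^k\widehat u^k u^k)_{i+1/2}\tfrac{\vr_i^k+\vr_{i+1}^k}{2}$.

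The time-derivative term is treated by discrete integration by parts in time, $\sum_{k=1}^m(G^k-G^{k-1})v^k=G^m v^m-G^0 v^1-\sum_{k=1}^{m-1}G^k(v^{k+1}-v^k)$, with $G_i^k=\tfrac12(\vr_i^k\widehat u_i^k+\vr_{i+1}^k\widehat u_{i+1}^k)$. The two boundary contributions $G^m v^m$ and $G^0 v^1$ are rewritten with the duality of Lemma \ref{lem:Aop}, which swaps $\partial_{i+1/2}\Delta_i^{-1}$ for $-\partial_i\Delta_{i+1/2}^{-1}$ and produces the two remaining explicit terms of \eqref{eq:eff}, namely $\partial_i\Delta_{i+1/2}^{-1}[G^m]$ and $\partial_i\Delta_{i+1/2}^{-1}[G^0]$ paired with the endpoint densities. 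For the interior sum I would substitute $v^{k+1}-v^k=\Delta t\,\partial_{i+1/2}\Delta_i^{-1}[\partial_t^{k+1}\vr]$ and eliminate $\partial_t^{k+1}\vr$ through the continuity scheme \eqref{num:cont}, so that this term becomes, to leading order, $\up(\vr^{k+1}u^{k+1})$ reconstructed through $\partial_{i+1/2}\Delta_i^{-1}\partial_i$ and paired with $G^k$.

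The crux, and the main obstacle, is the final bookkeeping of the two quadratic pieces. In the continuous theory the convective moment term and the interior time term cancel exactly, because $\partial_x\Delta^{-1}\partial_x=\mathrm{Id}$ on mean-zero fields; discretely, however, $\partial_{i+1/2}\Delta_i^{-1}$ and $\Delta_{i+1/2}^{-1}\partial_i$ live on the two dual grids and do not commute, and the upwind reconstruction $\up(\vr u)$ differs from the centered momentum $\vr\widehat u$. I would therefore isolate this mismatch as a commutator: the cancellation is exact up to terms measuring the failure of the two staggered inverse-Laplacians to coincide, the upwind-versus-centered discrepancy in the fluxes, and the $O(\Delta t)$ defect from implicit time stepping. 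Collecting these residuals is exactly what defines $E_1^m$ and $E_2^m$. The proof then concludes by multiplying through by $\Delta t$, summing over $k$, and negating, so the surviving exact terms assemble into \eqref{eq:eff}. The delicate point is verifying that every leftover is genuinely of commutator or higher order, rather than an uncontrolled $O(1)$ quantity, since these errors must later be shown to vanish in the limit.
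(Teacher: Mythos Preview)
Your overall strategy matches the paper's exactly: test \eqref{num:moment} against $v_{i+1/2}^k=\partial_{i+1/2}\Delta_i^{-1}[\vr^k-\mathcal{M}/L]$, use summation by parts on the diffusion and pressure terms, split the convective term via the centered difference of $v^k$, and handle the time derivative by discrete integration by parts in $k$ followed by the continuity scheme. The boundary-in-time contributions are indeed converted through the duality of Lemma~\ref{lem:Aop}.

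Where your account goes astray is in the diagnosis of the residuals. There is \emph{no} commutator error from the two staggered inverse Laplacians. After you invoke Lemma~\ref{lem:Aop} to pass from $\partial_{i+1/2}\Delta_i^{-1}$ to $\partial_i\Delta_{i+1/2}^{-1}$, insert $\partial_t^k\vr_i=-\partial_i\up(\vr^k u^k)$ from \eqref{num:cont}, and sum by parts once more in $i$, you reconstruct $\partial_{i+1/2}\partial_i\Delta_{i+1/2}^{-1}$, which is \emph{exactly} the identity on the Dirichlet velocity grid (this is the very definition of $\Delta_{i+1/2}$). The interior time sum is therefore precisely
\[
\Delta t\,\Delta x\sum_{k,i}\up(\vr^k u^k)_{i+1/2}\,\frac{\vr_i^{k-1}\widehat u_i^{k-1}+\vr_{i+1}^{k-1}\widehat u_{i+1}^{k-1}}{2},
\]
with no leftover. (Your own direct route, computing $\partial_{i+1/2}\Delta_i^{-1}\partial_i$ on $\up(\vr u)$, is likewise exact since $\up(\vr u)$ vanishes at the boundary nodes; your ``to leading order'' is too cautious.)

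The only two residuals are then: (i) the time shift replacing $k-1$ by $k$ in the momentum average above, which is exactly $E_1^m$ of \eqref{eff:E1}; and (ii) the purely algebraic mismatch between $\up(\vr^k u^k)_{i+1/2}\cdot\tfrac12(\vr_i^k\widehat u_i^k+\vr_{i+1}^k\widehat u_{i+1}^k)$ and the convective remnant $\up(\vr^k\widehat u^k u^k)_{i+1/2}\cdot\tfrac12(\vr_i^k+\vr_{i+1}^k)$. This is closed not by a commutator estimate but by the pointwise identity
\[
\up(\vr u)_{i+1/2}\,\frac{\vr_i\widehat u_i+\vr_{i+1}\widehat u_{i+1}}{2}
=\up(\vr\widehat u u)_{i+1/2}\,\frac{\vr_i+\vr_{i+1}}{2}
+\vr_i\vr_{i+1}\,|u_{i+1/2}|\,\frac{\widehat u_{i+1}-\widehat u_i}{2},
\]
obtained by expanding both upwind fluxes and redistributing the $\widehat u$ factor between the two slots of the product; the last term is $E_2^m$ of \eqref{eff:E2}. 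This identity is the concrete content of what you flagged as ``the crux''; once it is written down, the cancellation with the convective piece is exact and the proof closes.
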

\begin{proof}
Let $m = 1, \ldots, M$ be arbitrary.
By multiplying the momentum scheme \eqref{num:moment} with $v_{i+1/2}(\Delta t \Delta x)$, where
 $$
	v_{i+1/2}^k = \partial_{i+1/2}\Delta^{-1}_i\left[\vr^k_i - \frac{\mathcal{M}}{L}\right], \qquad \mathcal{M} = \int_0^L \vr^0~dx,
$$
and summing over all $i$ and $k=0, \ldots, m$, we obtain the starting point
\begin{equation}\label{eff:start}
	\begin{split}
		&\Delta t\Delta x\sum_{k=0}^m\sum_i \left(\mu \Delta_{i+1/2}u^{k}_h - \, 
		\partial_{i+1/2}p(\vr^{k}_h)\right)v_{i+1/2}^k \\
		&\qquad =\Delta t\Delta x\sum_{k=0}^m\sum_i\partial_t^k\left(\frac{\vr_i \widehat u_i + \vr_{i+1} \widehat u_{i+1}}{2}\right)v_{i+1/2}^k\\
		& \qquad
		\quad +\Delta t\Delta x\sum_{k=0}^m\sum_i\left(\frac{\up(\vr^k \widehat u^k u^{k})_{i+3/2} -\up(\vr^k \widehat u^k u^{k})_{i-1/2}}{2\Delta x}\right)
		v_{i+1/2}^k \\
		&\qquad= S_1 + S_2.
	\end{split}
\end{equation}
Here, we have introduced the quantities
\begin{equation*}
	\begin{split}
		S_1 &= \Delta t\Delta x\sum_{k=0}^m\sum_i\partial_t^k\left(\frac{\vr_i \widehat u_i + \vr_{i+1} \widehat u_{i+1}}{2}\right)v_{i+1/2}^k \\
		S_2 &= \Delta t\Delta x\sum_{k=0}^m\sum_i\left(\frac{\up(\vr^k \widehat u^k u^{k})_{i+3/2} -\up(\vr^k \widehat u^k u^{k})_{i-1/2}}{2\Delta x}\right)v_{i+1/2}^k
	\end{split}
\end{equation*}

We will now rewrite the sums $S_1$ and $S_2$ using the definition of $v_{i+1/2}^k$
and the continuity scheme. We will treat  the least involved term, namely $S_2$, first.
For this purpose, we apply summation by parts and the definition of $v_{i+1/2}^k$ 
to calculate
\begin{equation}\label{eff:S2F}
	\begin{split}
		S_2 &= - \Delta t\Delta x \sum_{k=0}^m\sum_i \up(\vr^k \widehat u^k u^k)_{i+1/2}\left(\frac{v_{i+3/2} - v_{i-1/2}}{2\Delta x}\right) \\
			&= -\Delta t\Delta x \sum_{k=0}^m \sum_i \up(\vr^k \widehat u^k u^k)_{i+1/2}\left(\frac{\vr_{i+1}^k + \vr_i^k}{2}\right) \\
			&\qquad +\Delta t\Delta x \sum_{k=0}^m\sum_i \up(\vr^k \widehat u^k u^k)_{i+1/2}\left(\frac{\mathcal{M}}{L}\right).
	\end{split}
\end{equation}

For the $S_1$ term, we first apply summation by parts in time 
followed by an application of Lemma \ref{lem:Aop} to deduce
\begin{equation}\label{eff:S1}
	\begin{split}
		S_1 &= \Delta t\Delta x\sum_{k=0}^m\sum_i\partial_t^k\left(\frac{\vr_i \widehat u_i + \vr_{i+1} \widehat u_{i+1}}{2}\right)
		 \left(\partial_{i+1/2}\Delta^{-1}_i\left[\vr^k_i - \frac{\mathcal{M}}{L}\right]\right)\\
		&= -\Delta t \Delta x \sum_{k=0}^m \sum_i\left(\frac{\vr^{k-1}_i \widehat u^{k-1}_i + \vr^{k-1}_{i+1} \widehat u^{k-1}_{i+1}}{2}\right)
		\left(\partial_{i+1/2}\Delta^{-1}_i\left[\partial_t^k \vr_i \right]\right) \\
		&= \Delta t \Delta x \sum_{k=0}^m \sum_{i} \partial_{i}\Delta^{-1}_{i+1/2}\left[\frac{\vr^{k-1}_i \widehat u^{k-1}_i + \vr^{k-1}_{i+1} \widehat u^{k-1}_{i+1}}{2}\right]
		\partial_t^k\vr_i \\
		&\qquad\qquad - \Delta x \sum_i\partial_{i}\Delta^{-1}_{i+1/2}\left[\frac{\vr^{m}_i \widehat u^{m}_i + \vr^{m}_{i+1} \widehat u^{m}_{i+1}}{2}\right]
		\vr^M_i\\
		&\qquad\qquad + \Delta x \sum_i\partial_{i}\Delta^{-1}_{i+1/2}\left[\frac{\vr^{0}_i \widehat u^{0}_i + \vr^{0}_{i+1} \widehat u^{0}_{i+1}}{2}\right]
		\vr^1_i
	\end{split}
\end{equation}

Next, we multiply the continuity scheme \eqref{num:cont} by $\Delta t \Delta x q_i^k$, where
$$
 q_i^k = \partial_{i}\Delta^{-1}_{i+1/2}\left[\frac{\vr^{k-1}_i \widehat u^{k-1}_i 
												  + \vr^{k-1}_{i+1} \widehat u^{k-1}_{i+1}}{2}\right],
$$
and sum over all $i$ and $k=0, \ldots, m$ to obtain
\begin{equation}\label{eff:time}
	\begin{split}
		&\Delta t \Delta x \sum_{k=0}^m\sum_{i} \partial_{i}\Delta^{-1}_{i+1/2}\left[\frac{\vr^{k-1}_i \widehat u^{k-1}_i 
										+ \vr^{k-1}_{i+1} \widehat u^{k-1}_{i+1}}{2}\right]\partial_t^k\vr_i \\
		&\quad = -\Delta t \Delta x\sum_{k=0}^m\sum_{i} \partial_{i}\up(\vr^k u^k)\left(\partial_{i}\Delta^{-1}_{i+1/2}
							\left[\frac{\vr^{k-1}_i \widehat u^{k-1}_i + \vr^{k-1}_{i+1} \widehat u^{k-1}_{i+1}}{2}\right]\right) \\
		&\quad = \Delta t \Delta x\sum_{k=0}^m\sum_{i} \up(\vr^k u^k)_{i+1/2}\left(\frac{\vr^{k-1}_i \widehat u^{k-1}_i + \vr^{k-1}_{i+1} \widehat u^{k-1}_{i+1}}{2}\right) \\
		&\quad = \Delta t \Delta x \sum_{k=0}^m\sum_{i} \up(\vr^k u^k)_{i+1/2}\left(\frac{\vr^{k}_i \widehat u^{k}_i + \vr^{k}_{i+1} \widehat u^{k}_{i+1}}{2}\right) + E_1^m,
	\end{split}
\end{equation}
where the last identity follows from summation by parts, the definition of $\Delta^{-1}_{i+1/2}$, 
and where we have introduced the error term 
\begin{equation}\label{eff:E1}
	\begin{split}
		E_1^m&= 
		 -\Delta t \Delta x \sum_{k=0}^m\sum_{i} \up(\vr^k u^k)_{i+1/2}
			\left(\frac{\vr^{k}_i \widehat u^{k}_i + \vr^{k}_{i+1} \widehat u^{k}_{i+1}}{2}\right. \\
			&\qquad \qquad\qquad \qquad\qquad \qquad\qquad \qquad-\left. \frac{\vr^{k-1}_i \widehat u^{k-1}_i+ \vr^{k-1}_{i+1} \widehat u^{k-1}_{i+1}}{2}\right)		
	\end{split}
\end{equation}

To conclude the proof, we shall need to further rewrite \eqref{eff:time}.
By adding and subtracting, we obtain the identity
\begin{equation*}
	\begin{split}
		&\up(\vr^k u^k)_{i+1/2}\left(\frac{\vr^{k}_i \widehat u^{k}_i + \vr^{k}_{i+1} \widehat u^{k}_{i+1}}{2}\right) \\
		&= \left(\vr_i^k  u_{i+1/2}^{+,k} + \vr_{i+1}^k  u_{i+1/2}^{-,k}\right)\left(\frac{\vr^{k}_i \widehat u^{k}_i + \vr^{k}_{i+1} \widehat u^{k}_{i+1}}{2}\right) \\
		&= \vr^k_i u_{i+1/2}^{+, k}\widehat u_i^k \left(\frac{\vr^k_i + \vr^k_{i+1}}{2}\right)
		+\vr^k_i u_{i+1/2}^{+,k}\vr_{i+1}^k\left(\frac{\widehat u_{i+1}^k -\widehat u_{i}^k }{2}\right) \\
		&\quad +\vr^k_{i+1} u_{i+1/2}^{-, k}\widehat u_{i+1}^k\left(\frac{\vr^k_i + \vr^k_{i+1}}{2}\right)
		-\vr^k_{i+1} u_{i+1/2}^{-, k}\vr_i^k\left(\frac{\widehat u_{i+1}^k -\widehat u_{i}^k }{2}\right) \\
		&= \up(\vr^k \widehat u^k u^k)\left(\frac{\vr^k_i + \vr^k_{i+1}}{2}\right)
		+ \vr_i^k\vr_{i+1}^k \left|u_{i+1/2}^k\right|\left(\frac{\widehat u_{i+1}^k -\widehat u_{i}^k }{2}\right).
	\end{split}
\end{equation*}
Consequently, by combining this identity \eqref{eff:time} and \eqref{eff:S1}, we conclude
\begin{equation}\label{eff:S1F}
	\begin{split}
		S_1 &= \Delta t \Delta x \sum_{k=0}^m\sum_{i}\up(\vr^k \widehat u^k u^k)\left(\frac{\vr^k_i + \vr^k_{i+1}}{2}\right)\\
		&\qquad\qquad - \Delta x \sum_i\partial_{i}\Delta^{-1}_{i+1/2}\left[\frac{\vr^{m}_i \widehat u^{m}_i + \vr^{m}_{i+1} \widehat u^{m}_{i+1}}{2}\right]
		\vr^M_i\\
		&\qquad\qquad + \Delta x \sum_i\partial_{i}\Delta^{-1}_{i+1/2}\left[\frac{\vr^{0}_i \widehat u^{0}_i + \vr^{0}_{i+1} \widehat u^{0}_{i+1}}{2}\right]
		\vr^1_i + E_1^m + E_2^m,
	\end{split}
\end{equation}
where 
\begin{equation}\label{eff:E2}
	E_2^m = \Delta t \Delta x \sum_{k=0}^m\sum_{i}\vr_i^k\vr_{i+1}^k \left|u_{i+1/2}^k\right|\left(\frac{\widehat u_{i+1}^k -\widehat u_{i}^k }{2}\right).
\end{equation}

Finally, we apply \eqref{eff:S2F} and \eqref{eff:S1F} to \eqref{eff:start} to discover
\begin{equation*}
	\begin{split}
		&\Delta t\Delta x\sum_{k=0}^m\sum_i \left(\mu \Delta_{i+1/2}u^{k}_h - \, 
		\partial_{i+1/2}p(\vr^{k}_h)\right)v_{i+1/2}^k \\
		&=  \Delta t\Delta x \sum_{k=0}^m\sum_i \up(\vr^k \widehat u^k u^k)_{i+1/2}\left(\frac{\mathcal{M}}{L}\right)\\
		&\quad- \Delta x \sum_i\partial_{i}\Delta^{-1}_{i+1/2}\left[\frac{\vr^{m}_i \widehat u^{m}_i + \vr^{m}_{i+1} \widehat u^{m}_{i+1}}{2}\right]
		\vr^m_i\\
		&\qquad\qquad + \Delta x \sum_i\partial_{i}\Delta^{-1}_{i+1/2}\left[\frac{\vr^{0}_i \widehat u^{0}_i + \vr^{0}_{i+1} \widehat u^{0}_{i+1}}{2}\right]
		\vr^1_i + E_1^m + E_2^m.
	\end{split}
\end{equation*}
This and a final summation by parts, 
\begin{equation*}
	\begin{split}
		&\Delta t\Delta x\sum_{k=0}^m\sum_i \left(\mu \Delta_{i+1/2}u^{k}_h - \, 
		\partial_{i+1/2}p(\vr^{k}_h)\right)v_{i+1/2}^k \\
		&=-\Delta t \Delta x \sum_{k=0}^m\sum_{i} \left(\mu \partial_i u^k_h - p\left(\vr_i^k\right)\right)\left(\vr_i^k - \frac{\mathcal{M}}{L}\right),
	\end{split}
\end{equation*}
concludes the proof.
\end{proof}

\subsection{Bound on the error terms in (\ref{eq:eff})}
In order for the previous proposition to be useful, 
we will need to prove suitable bounds on the 
error terms $E^h_1$ and $E^h_2$. It is at this stage
we will need to impose some requirements 
on the value of $\gamma$. No other results 
in the paper imposes any unphysical restrictions on $\gamma$.

We begin by deriving a bound on $E_2^h$.

\begin{lemma}\label{lem:E2}
Assume that the adiabatic exponent satisfies
\begin{equation*}
	 \gamma > \frac{4}{3},
\end{equation*}
and let $E_2^h$ be given by \eqref{eff:E2}. There is a constant $C> 0$, 
independent of $\Delta t$ and $\Delta x$, such that for any $m=1, \ldots, M$,
\begin{equation*}
	|E^m_2| \leq (\Delta x)^\frac{3\gamma-4}{2\gamma},
\end{equation*}
\end{lemma}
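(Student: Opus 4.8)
The plan is to exploit the hidden factor of $\Delta x$ sitting inside the averaged--velocity difference, together with the $L^2$-in-time, $H^1$-in-space control on $u_h$ coming from the energy estimate, paying an inverse inequality to compensate for the fact that the density is only controlled in $L^\gamma$ with $\gamma<4$. First I would rewrite the difference as a scaled discrete gradient,
\begin{equation*}
\frac{\widehat u_{i+1}^k-\widehat u_i^k}{2}=\frac{\Delta x}{2}\,\partial_{i+1/2}\widehat u^k,
\end{equation*}
so that, taking absolute values in \eqref{eff:E2},
\begin{equation*}
|E_2^m|\le \frac{\Delta t(\Delta x)^2}{2}\sum_{k=0}^m\sum_i \vr_i^k\vr_{i+1}^k\,\abs{u_{i+1/2}^k}\,\abs{\partial_{i+1/2}\widehat u^k}.
\end{equation*}
I would then apply the Cauchy--Schwarz inequality over the pair of indices $(k,i)$ with respect to the discrete measure $\Delta t\Delta x\sum_{k,i}$, pairing $\vr_i\vr_{i+1}\abs{u_{i+1/2}}$ against $\abs{\partial_{i+1/2}\widehat u}$ and keeping the prefactor $\Delta x/2$ outside:
\begin{equation*}
|E_2^m|\le \frac{\Delta x}{2}\left(\Delta t\Delta x\sum_{k,i}\left(\vr_i^k\vr_{i+1}^k\right)^2\abs{u_{i+1/2}^k}^2\right)^{1/2}\left(\Delta t\Delta x\sum_{k,i}\abs{\partial_{i+1/2}\widehat u^k}^2\right)^{1/2}.
\end{equation*}

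The second factor is harmless: since $\partial_{i+1/2}\widehat u^k=\tfrac12(\partial_i u^k+\partial_{i+1}u^k)$ is an average of two adjacent cell-gradients of the piecewise linear $u_h$, averaging cannot increase the discrete gradient norm, so $\Delta x\sum_i\abs{\partial_{i+1/2}\widehat u^k}^2\le \Delta x\sum_i\abs{\partial_{i+1/2}u_h^k}^2$ and the factor is bounded by $\norm{(u_h)_x}_{L^2(0,T;L^2(\Om))}\le C$ via Proposition \ref{lem:energy} and Corollary \ref{cor:energy}. For the first (density) factor I would bound $\abs{u_{i+1/2}^k}\le\norm{u_h^k}_{L^\infty(\Om)}$ and $(\vr_i\vr_{i+1})^2\le\tfrac12(\vr_i^4+\vr_{i+1}^4)$, giving $\Delta x\sum_i(\vr_i^k\vr_{i+1}^k)^2\abs{u_{i+1/2}^k}^2\le \norm{u_h^k}_{L^\infty(\Om)}^2\norm{\vr_h^k}_{L^4(\Om)}^4$. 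This is where the restriction on $\gamma$ enters: since $4>\gamma$, the inverse inequality for piecewise constants gives $\norm{\vr_h^k}_{L^4(\Om)}^4\le (\Delta x)^{1-4/\gamma}\norm{\vr_h^k}_{L^\gamma(\Om)}^4$. Summing in time and invoking $\vr_h\inb L^\infty(0,T;L^\gamma(\Om))$ and $u_h\inb L^2(0,T;L^\infty(\Om))$ from Corollary \ref{cor:energy},
\begin{equation*}
\Delta t\Delta x\sum_{k,i}\left(\vr_i^k\vr_{i+1}^k\right)^2\abs{u_{i+1/2}^k}^2\le (\Delta x)^{1-4/\gamma}\norm{\vr_h}_{L^\infty(0,T;L^\gamma(\Om))}^4\norm{u_h}_{L^2(0,T;L^\infty(\Om))}^2\le C(\Delta x)^{1-4/\gamma}.
\end{equation*}

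Combining the two factors yields $|E_2^m|\le C\,\tfrac{\Delta x}{2}(\Delta x)^{(1-4/\gamma)/2}=C(\Delta x)^{3/2-2/\gamma}=C(\Delta x)^{\frac{3\gamma-4}{2\gamma}}$, which is the assertion (up to the multiplicative constant $C$ that should appear on the right-hand side); note the exponent is positive exactly when $\gamma>4/3$. The main obstacle is choosing the Cauchy--Schwarz split correctly. One is tempted to feed the cross term into the numerical-diffusion quantity $\mathcal{N}_4$ from \eqref{eq:energy}, but that route only produces the weight $\vr^3$ and the weaker exponent $\frac{2\gamma-3}{2\gamma}$, positive only for $\gamma>3/2$. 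Pairing $\abs{u_{i+1/2}}$ with itself, so that $\abs{u_{i+1/2}}^2\le\norm{u_h^k}_{L^\infty(\Om)}^2$, instead forces the power $\vr^4$, and the careful bookkeeping of the $\Delta x$ powers---so that the leading $\Delta x$ from the averaging precisely offsets the loss $(\Delta x)^{(1-4/\gamma)/2}$ from the inverse inequality---is what delivers the sharp threshold $\gamma>4/3$.
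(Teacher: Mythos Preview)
Your proof is correct and follows essentially the same route as the paper: extract the factor $\Delta x$ from $\widehat u_{i+1}-\widehat u_i$, apply Cauchy--Schwarz to separate the discrete gradient factor (bounded by the energy), and control the remaining density--velocity factor via $\|u_h\|_{L^2(0,T;L^\infty)}\|\vr_h\|_{L^\infty(0,T;L^4)}^2$ together with the inverse inequality $\|\vr_h\|_{L^4}\le(\Delta x)^{1/4-1/\gamma}\|\vr_h\|_{L^\gamma}$. Your added remark comparing this split with the weaker $\mathcal N_4$-based alternative is not in the paper but is a helpful clarification of why this particular pairing yields the threshold $\gamma>4/3$.
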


\begin{proof}
By two applications of the Cauchy-Schwartz inequality and a 
standard inverse estimate, we deduce
	\begin{equation*}\label{E2:start}
		\begin{split}
			|E_2^m|\leq |E_2^M| &\leq \Delta t \Delta x \sum_k\sum_{i}\vr_i^k\vr_{i+1}^k \left|u_{i+1/2}^k\right|\left|\frac{\widehat u_{i+1}^k -\widehat u_{i}^k }{2}\right| \\
			&\leq (\Delta x)C\left(\Delta t \Delta x \sum_k\sum_{i}\left|\partial_{i}u_{i+1/2}^k\right|^2\right)^\frac{1}{2} \\
			&\qquad \qquad \times \|u_h\|_{L^2(0,T;L^\infty(\Om))}\|\vr_h\|_{L^\infty(0,T;L^4(\Om))}^2 \\
			&\leq (\Delta x)^1\left(\Delta x\right)^{2\left(\frac{1}{4} - \frac{1}{\gamma}\right)}C \leq (\Delta x)^{\frac{3\gamma-4}{2\gamma}}C,
		\end{split}
	\end{equation*}
	where the norms are bounded due to Corollary \ref{cor:energy}.
\end{proof}

We are now ready to bound the other error term in \eqref{eq:eff}.
The following lemma is the sole reason for the requirement 
$\gamma > \frac{3}{2}$.

\begin{lemma}\label{lem:E1}
Assume that the adiabatic coefficient satisfies 
$$
	\gamma > \frac{3}{2},
$$
and let $E_1^h$  be given by \eqref{eff:E1}. 
There exists a constant $C>0$, independent of discretization 
parameters, such that for any $m=0, \ldots, M$,
\begin{equation*}
	\begin{split}
		|E^m_1| &\leq h^\frac{2\gamma - 3}{2\gamma}C.
	\end{split} 
\end{equation*}	
\end{lemma}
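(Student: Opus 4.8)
The plan is to read $E_1^m$ as a time-discretization defect. Writing $\pi_i^k := \tfrac12\big(\vr_i^k\widehat u_i^k+\vr_{i+1}^k\widehat u_{i+1}^k\big)$, the parenthesis in \eqref{eff:E1} is exactly $\pi_i^k-\pi_i^{k-1}$, so that $E_1^m = -\Delta t\Delta x\sum_{k=0}^m\sum_i\up(\vr^k u^k)_{i+1/2}\big(\pi_i^k-\pi_i^{k-1}\big)$. The idea is to absorb these discrete time increments into the numerical dissipation terms $\mathcal{N}_1$ and $\mathcal{N}_3$ furnished by Proposition \ref{lem:energy}, exactly in the spirit of Lemma \ref{lem:E2}: peel off the increment with Cauchy--Schwarz, and pay for the excess density integrability with an inverse inequality and powers of $h$.

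The decisive algebraic choice is the \emph{backward} discrete Leibniz rule $\vr_i^k\widehat u_i^k-\vr_i^{k-1}\widehat u_i^{k-1}=\vr_i^{k-1}(\widehat u_i^k-\widehat u_i^{k-1})+\widehat u_i^k(\vr_i^k-\vr_i^{k-1})$, so that the velocity increment is weighted by $\vr_i^{k-1}$, precisely the weight in $\mathcal{N}_3=\tfrac{\Delta x}{2}\sum_{k,i}\vr_i^{k-1}|\widehat u_i^k-\widehat u_i^{k-1}|^2$. For this velocity-increment part I would bound $|\up(\vr^k u^k)_{i+1/2}|\le(\vr_i^k+\vr_{i+1}^k)\|u_h^k\|_{L^\infty}$, apply Cauchy--Schwarz in $i$ against $(\vr_i^{k-1})^{1/2}|\widehat u_i^k-\widehat u_i^{k-1}|$, and then in $k$ using $\Delta t\sum_k\|u_h^k\|_{L^\infty}^2\le C$ and $\Delta t\,\mathcal{N}_3\le Ch$. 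The leftover density factor $\Delta x\sum_i(\vr_i^k+\vr_{i+1}^k)^2\vr_i^{k-1}$ becomes, after Young's inequality, an $L^3$ mass, and the inverse inequality $\|\vr_h\|_{L^3}\le Ch^{1/3-1/\gamma}\|\vr_h\|_{L^\gamma}$ turns the bounded $L^\gamma$ norm into $h^{(1-3/\gamma)/2}$; combined with the $h^{1/2}$ from $\mathcal{N}_3$ this gives exactly $h^{(2\gamma-3)/(2\gamma)}$.

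For the density-increment part $\Delta t\Delta x\sum_{k,i}|\up(\vr^k u^k)_{i+1/2}|\,|\widehat u_i^k|\,|\vr_i^k-\vr_i^{k-1}|$ I would pair $|\vr_i^k-\vr_i^{k-1}|$ against $\mathcal{N}_1=\Delta x\sum_{k,i}p''(\vr_i^\ddagger)|\vr_i^k-\vr_i^{k-1}|^2$, gaining another $h^{1/2}$ from $\Delta t\,\mathcal{N}_1\le Ch$. The trick is not to waste integrability: keep one velocity factor as $\|u_h^k\|_{L^\infty}$ but group the other with a half power of density as $(\vr_i^k)^{1/2}|\widehat u_i^k|$, which is controlled by the kinetic-energy bound $\vr_h|\widehat u_h|^2\inb L^\infty(L^1)$. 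The weight left to absorb is then $\vr_i^k(\vr_i^\ddagger)^{2-\gamma}\sim\vr^{3-\gamma}$, and the $L^\infty$ inverse inequality $\|\vr_h\|_{L^\infty}\le Ch^{-1/\gamma}\|\vr_h\|_{L^\gamma}$ costs $h^{-(3-\gamma)/(2\gamma)}$; once more the total is $h^{(2\gamma-3)/(2\gamma)}$. Both parts are bounded independently of $m$ (extending the sums to $k=M$ as in Lemma \ref{lem:E2}), so adding them proves the claim.

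The main obstacle is the mismatch between the density weights carried by the error term and by the dissipation terms: $\mathcal{N}_3$ is weighted by $\vr^{k-1}$ and $\mathcal{N}_1$ by $p''(\vr^\ddagger)\sim\vr^{\gamma-2}$, whereas the upwind flux carries $\vr^k$. The backward Leibniz split is what aligns the weights in the velocity part and prevents a factor $1/\vr^{k-1}$ from ever appearing, which matters because the pointwise lower bound on the density is not uniform in $h$; in the density part the same safeguarding role is played by the energy bound $\vr_h|\widehat u_h|^2\inb L^\infty(L^1)$. Arranging the two inverse-inequality exponents and the two factors of $h^{1/2}$ to conspire into precisely $\tfrac{2\gamma-3}{2\gamma}$ — hence the threshold $\gamma>\tfrac32$ — is the delicate bookkeeping, and is exactly where the hypothesis enters.
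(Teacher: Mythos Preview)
Your approach is correct and is essentially the same as the paper's: both start from the backward discrete Leibniz rule to split into a velocity-increment piece and a density-increment piece, pair each against the matching numerical dissipation term ($\mathcal{N}_3$ and $\mathcal{N}_1$ respectively) via Cauchy--Schwarz to extract a factor $h^{1/2}$, and then pay for the residual density factors with inverse inequalities. The only differences are cosmetic H\"older choices in the residual step --- you route through $\|\vr_h\|_{L^\infty(L^3)}$ and the kinetic-energy bound $\vr_h|\widehat u_h|^2\inb L^\infty(L^1)$, while the paper routes through $\|\vr_h u_h\|_{L^\infty(L^{2\gamma/(\gamma+1)})}$, $\|\vr_h u_h\|_{L^2(L^\gamma)}$ and $\|\vr_h\|_{L^\infty(L^\infty)}^{2-\gamma}$ --- and both produce exactly the exponent $\tfrac{2\gamma-3}{2\gamma}$.
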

\begin{proof}
From \eqref{eff:E1}, we have that
\begin{equation}\label{E1:rstart}
	\begin{split}
		\left|E_1^m\right| &=
		\left|(\Delta t)^2 \Delta x \sum_{k=0}^m\sum_{i} \up(\vr^k u^k)_{i+1/2}
			\left(\frac{ \partial_t^k(\vr_i \widehat u_i) + \partial_t^{k}(\vr_{i+1} \widehat u_{i+1})}{2}\right)\right| \\
	\end{split}
\end{equation}
Let us now examine one of the terms in $E_1^h$. By adding and subtracting, 
we write
\begin{equation}\label{E1:start}
	\begin{split}
		&\left|(\Delta t)^2 \Delta x\sum_{k=0}^m\sum_{i}\up(\vr^k u^k)_{i+1/2} \partial_t^k(\vr_i \widehat u_i)\right| \\
		&\quad\leq (\Delta t)^2 \Delta x\sum_{k=0}^m\sum_{i} \left|\up(\vr^k u^k)_{i+1/2} \widehat u_i^k\right| |\partial_t^k \vr_i| \\
		&\qquad + (\Delta t)^2 \Delta x\sum_{k=0}^m\sum_{i}\left|\up(\vr^k u^k)_{i+1/2}\right|\vr_i^{k-1}\left|\partial_t^k \widehat u_i\right| 
		 =: S_1 + S_2.
	\end{split}
\end{equation}
To bound the $S_1$ term, 
we multiply and divide by $\sqrt{p}''(\vr^\ddagger_i)$ and apply the Cauchy-Schwartz 
inequality 
\begin{equation}\label{eq:E11}
	\begin{split}
		S_1 &= (\Delta t)^2 \Delta x\sum_{k=0}^m\sum_{i} \left|\up(\vr^k u^k)_{i+1/2} \widehat u_i^k\right| |\partial_t^k \vr_i| \\
		& \leq 
		(\Delta t)^\frac{1}{2} C\left(\Delta x \Delta t\sum_{k=0}^m\sum_{i} \left|\up(\vr^k u^k)_{i+1/2} \widehat u_i^k\right|^2 (\vr^\ddagger_i)^{2-\gamma} \right)^\frac{1}{2} \\
		&\qquad\qquad \times \left((\Delta t)^2 \Delta x\sum_{k=0}^m\sum_{i} p''(\vr_i^\ddagger)\left|\partial_t^k \vr_i\right|^2\right)^\frac{1}{2}.
	\end{split}
\end{equation}
Next, we make several applications of the H\"older inequality together
with standard inverse estimates, to deduce
\begin{equation}\label{eq:E12}
	\begin{split}
		&\Delta x \Delta t\sum_{k=0}^m\sum_{i} \left|\up(\vr^k u^k)_{i+1/2} \widehat u_i^k\right|^2 (\vr^\ddagger_i)^{2-\gamma} \\
		&\qquad\leq \|u_h\|_{L^\infty(0,T;L^2(\Om))}^2\|\vr_h\|_{L^\infty(0,T;L^\infty(\Om))}^{2-\gamma}\|\vr_h u_h\|_{L^\infty(0,T;L^2(\Om))}^2 \\
		&\qquad \leq C (\Delta x)^{-\frac{2-\gamma}{\gamma}}\|\vr_h\|_{L^\infty(0,T;L^\gamma(\Om))}^{2-\gamma}\\
		&\qquad  \qquad \times \, (\Delta x)^{2\left(\frac{1}{2} - \frac{\gamma+1}{2\gamma}\right)}\|\vr_h u_h\|_{L^\infty(0,T;L^\frac{2\gamma}{\gamma+1}(\Om))}^2 \\
		&\qquad \leq (\Delta x)^{-1+ \frac{2\gamma-3}{\gamma}}C,
	\end{split}
\end{equation}
where we have used Corollary \ref{cor:energy} to conclude the last inequality.

Combining \eqref{eq:E12}-\eqref{eq:E11} and 
recalling that $\Delta t = \Delta x$, yields
\begin{equation}\label{eq:ES1}
	S_1 \leq (\Delta x)^{\frac{2\gamma-3}{2\gamma}}.
\end{equation}

Next, we turn to the $S_2$ term in \eqref{E1:start}. An application 
of the Cauchy-Schwartz inequality yields
\begin{equation}\label{eq:S21}
	\begin{split}
		S_2 & =(\Delta t)^2 \Delta x\sum_{k=0}^m\sum_{i}\left|\up(\vr^k u^k)_{i+1/2}\right|\vr_i^{k-1}\left|\partial_t^k \widehat u_i\right| \\
		&\leq \left((\Delta t)^2 \Delta x \sum_{k=0}^m\sum_{i}\vr_{i-1}^k \left|\partial_t^k \widehat u_i\right|^2\right)^\frac{1}{2} \\
		&\qquad \times (\Delta t)^\frac{1}{2}\left(\Delta t \Delta x \sum_{k=0}^m\sum_{i}\left|\up(\vr^k u^k)_{i+1/2}\right|^2\vr_i^{k-1}\right)^\frac{1}{2} \\
		&\leq (\Delta t)^\frac{1}{2}C \|\vr_h u_h\|_{L^2(0,T;L^2(\Om))}\|\vr_h\|_{L^\infty(0,T;L^\infty(\Om))}^\frac{1}{2}
	\end{split}
\end{equation}
Next, we proceed as in \eqref{eq:E12} to discover
\begin{equation}\label{eq:S22}
	\begin{split}
		&\Delta t \Delta x \sum_{k=0}^m\sum_{i}\left|\up(\vr^k u^k)_{i+1/2}\right|^2\vr_i^{k-1} \\
		&\qquad \leq \|\vr_h u_h\|_{L^2(0,T;L^2(\Om))}^2\|\vr_h\|_{L^\infty(0,T;L^\infty(\Om))} \\
		&\qquad \leq C(\Delta x)^{2\left(\frac{1}{2} - \frac{1}{\gamma}\right)}\|\vr_h u_h\|_{L^2(0,T;L^\gamma(\Om))}^2
		(\Delta x)^{-\frac{1}{\gamma}}\|\vr_h\|_{L^\infty(0,T^;L^\gamma(\Om))} \\
		&\qquad \leq (\Delta x)^{-1 + \frac{2\gamma - 3}{\gamma}}C,
	\end{split}
\end{equation}
where we have used Corollary \ref{cor:energy} in the last inequality.

By combining \eqref{eq:S21} and\eqref{eq:S22}, and recalling that $\Delta t = \Delta x$, 
we conclude 
\begin{equation}\label{eq:ES2}
	S_2 \leq (\Delta x)^{\frac{2\gamma-3}{2\gamma}}C.
\end{equation}
By setting \eqref{eq:ES1} and \eqref{eq:ES2} in \eqref{E1:start} we obtain
\begin{equation*}
	S_1 + S_2 \leq (\Delta x)^\frac{2\gamma-3}{2\gamma}C,
\end{equation*}
and hence we have the desired bound for the first term in \eqref{E1:rstart}.
The second term in \eqref{E1:rstart} can be bounded by the exact same arguments.
\end{proof}

\subsection{Higher integrability on the density}
From Corollary \ref{cor:energy}, we only know that 
$p(\vr_h)$ is uniformly (in $\Delta t$ and $\Delta x$)
bounded in $L^\infty(0,T;L^1(0,L))$. Hence, it is unclear 
whether $p(\vr_h)$ actually converges to an integrable function. 
In the following lemma, we prove that the pressure has
more integrability than provided by the energy estimate.

\begin{lemma}\label{lem:}
Let $(\vr_h, u_h)$ be the numerical solution constructed
using Definition \ref{def:scheme} 
and \eqref{def:ext1}-\eqref{def:ext2}, 
with 
$$
	\gamma > \frac{3}{2}.
$$
There is a constant $C>0$, independent of discretization parameters, such that
\begin{equation*}
	a\int_0^T\int_0^L \vr_h^{\gamma+1}~ dxdt \leq C.
\end{equation*}
In other words,  $p(\vr_h) \in L^\infty(0,T;L^\frac{\gamma+1}{\gamma}(\Om))$.
\end{lemma}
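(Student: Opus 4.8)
The plan is to use the effective viscous flux identity from Proposition \ref{pro:eff} to extract extra integrability on the pressure. The key observation is that the left-hand side of \eqref{eq:eff}, when expanded, contains the crucial term $\Delta t \Delta x \sum_{k}\sum_i p(\vr_i^k)\vr_i^k$, which is precisely $a\,\Delta t \Delta x \sum_k \sum_i (\vr_i^k)^{\gamma+1}$, the quantity we wish to bound. The strategy, then, is to isolate this term on one side and show that every other term in \eqref{eq:eff} is bounded by a constant independent of the discretization parameters.

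First I would rewrite \eqref{eq:eff} by splitting the left-hand side into the pressure-times-density contribution, the viscosity-times-density contribution, and the terms involving the average $\mathcal{M}/L$. The term $a\,\Delta t \Delta x \sum_k\sum_i (\vr_i^k)^{\gamma+1}$ is the target; I move everything else to the right. The term $-\mu \Delta t \Delta x \sum_k \sum_i (\partial_i u_h^k)\vr_i^k$ is estimated by Cauchy-Schwarz, using that $u_h \inb L^2(0,T;W^{1,2}_0(\Om))$ from Corollary \ref{cor:energy} together with the bound $\vr_h \inb L^\infty(0,T;L^\gamma(\Om))$; since $\gamma \geq 2$ fails in our range, here one pairs the $L^2$ gradient bound against $\vr_h$ measured in $L^2$, which is controlled because $\gamma > 3/2 > \dots$ — more carefully, one uses interpolation so that the product lands in $L^1(\Dom)$ uniformly. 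The terms carrying $\mathcal{M}/L$ are handled using the energy bounds on $\up(\vr^k \widehat u^k u^k)$, exactly as the quantities $\vr_h u_h$ and $\vr_h|u_h|^2$ are controlled in Corollary \ref{cor:energy}.

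Next I would treat the two boundary-in-time terms involving $\partial_i \Delta_{i+1/2}^{-1}[\cdots]\,\vr_i^M$ and $\vr_i^1$. These are bounded using the $H^1$-type stability of the discrete operator $\partial_i \Delta_{i+1/2}^{-1}$ applied to the momentum $\tfrac{\vr_i \widehat u_i + \vr_{i+1}\widehat u_{i+1}}{2}$, paired against $\vr_h$; the momentum is controlled in $L^\infty(0,T;L^{2\gamma/(\gamma+1)}(\Om))$ and the density in $L^\infty(0,T;L^\gamma(\Om))$, and one checks the H\"older exponents close under the condition $\gamma > 3/2$ (this is where the discrete elliptic regularity of the inverse Laplacian enters). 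Finally, the error terms $E_1^m$ and $E_2^m$ are dispatched immediately: Lemma \ref{lem:E1} gives $|E_1^m| \leq h^{(2\gamma-3)/(2\gamma)}C$ and Lemma \ref{lem:E2} gives $|E_2^m| \leq (\Delta x)^{(3\gamma-4)/(2\gamma)}$, both of which tend to zero (and in particular are bounded) precisely because $\gamma > 3/2$. Collecting all the bounds and taking the maximum over $m$ yields $a\,\Delta t \Delta x \sum_k \sum_i (\vr_i^k)^{\gamma+1} \leq C$, which upon passing to the continuous extension \eqref{def:ext1} is exactly $a\int_0^T\int_0^L \vr_h^{\gamma+1}\,dxdt \leq C$.

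The main obstacle I anticipate is the term $-\mu \Delta t \Delta x \sum_k\sum_i (\partial_i u_h^k)\vr_i^k$ together with the elliptic-inverse boundary terms: one must verify that the available integrability of $\vr_h$ (only $L^\gamma$ in space with $\gamma < 2$) is enough to close the estimate against the $L^2$-gradient control on $u_h$, which forces the use of the higher spatial integrability of $\vr_h$ coming from Sobolev embedding of $W^{1,2}_0(\Om) \hookrightarrow L^\infty(\Om)$ in one dimension rather than a naive H\"older split. The favorable one-dimensional embedding $u_h \inb L^2(0,T;L^\infty(\Om))$ is what ultimately makes the pairing bounded, and identifying that this is the right tool is the crux of the argument.
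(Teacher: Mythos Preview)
Your overall strategy is correct and matches the paper: rewrite \eqref{eq:eff} so that $a\int_0^T\int_0^L \vr_h^{\gamma+1}\,dxdt$ sits on the left, then bound every remaining term. The error terms $E_1^m$, $E_2^m$ are handled exactly as you say, via Lemmas \ref{lem:E1} and \ref{lem:E2}.

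However, there is a genuine gap in your treatment of the cross term $\mu\int_0^T\int_0^L (u_h)_x\,\vr_h\,dxdt$. You propose to bound it outright using Cauchy--Schwarz and the a priori information $(u_h)_x\in L^2(L^2)$, $\vr_h\in L^\infty(L^\gamma)$, and when this fails (because $\gamma<2$) you invoke ``interpolation'' or the one-dimensional embedding $u_h\in L^2(0,T;L^\infty)$. Neither works: the embedding controls $u_h$, not $(u_h)_x$, and there is nothing higher than $L^\gamma$ to interpolate against --- that is precisely what you are trying to prove. The paper's resolution is an \emph{absorption} argument: apply Young's inequality to obtain
\[
\mu\int (u_h)_x\,\vr_h \;\le\; \frac{\mu}{\epsilon}\|(u_h)_x\|_{L^2(L^2)}^2 + \epsilon\|\vr_h\|_{L^{\gamma+1}(L^{\gamma+1})}^{\gamma+1},
\]
and then choose $\epsilon$ small enough to absorb the last term into the left-hand side $a\int \vr_h^{\gamma+1}$. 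This bootstrap step is the crux of the argument and is missing from your proposal.

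A secondary point: you overcomplicate the two boundary-in-time terms. In one space dimension the operator $\partial_i\Delta_{i+1/2}^{-1}$ is essentially a primitive, so it maps $L^1\to L^\infty$; the paper simply uses
\[
\left\|\partial_i\Delta_{i+1/2}^{-1}\!\left[\vr_h^k\widehat u_h^k\right]\right\|_{L^\infty}\le C\|\vr_h u_h\|_{L^1},
\]
and pairs this against $\vr_h\in L^1$. No elliptic regularity or delicate H\"older balancing is needed, and no use of $\gamma>3/2$ enters here.
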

\begin{proof}
First, we rewrite the equation \eqref{eq:eff} in the form
\eqref{eq:eff}
\begin{equation*}
	\begin{split}
		&\int_0^T\int_0^L p(\vr_h)\vr_h~dxdt \\
		&\qquad 
		= \int_0^T\int_0^L p(\vr_h)\frac{\mathcal{M}}{L}~dx 
		 + \mu\int_0^T\int_0^L (u_h)_x \vr_h~dxdt \\
		&\qquad\quad +\Delta t\Delta x \sum_{k=0}^m\sum_i \up(\vr^k \widehat u^k u^k)_{i+1/2}\left(\frac{\mathcal{M}}{L}\right)\\
		&\qquad\quad  - \Delta x \sum_i\partial_{i}\Delta^{-1}_{i+1/2}\left[\frac{\vr^{M}_i \widehat u^{M}_i + \vr^{M}_{i+1} \widehat u^{M}_{i+1}}{2}\right]
		\vr^M_i\\
		&\qquad \quad + \Delta x \sum_i\partial_{i}\Delta^{-1}_{i+1/2}\left[\frac{\vr^{0}_i \widehat u^{0}_i + \vr^{0}_{i+1} \widehat u^{0}_{i+1}}{2}\right]
		\vr^1_i 
		 + E_1^M + E_2^M.
	\end{split}
\end{equation*}
To bound the terms involving the discrete inverse Laplacian, 
we shall use the elementary bound
\begin{equation*}
	\left\|\partial_i\Delta^{-1}_{i+1/2}
	\left[\vr^k_h \widehat u_h^k\right]\right\|_{L^\infty(0,T;L^\infty(\Om))}
	\leq C\|\vr_h u_h\|_{L^\infty(0,T;L^1(\Om))}.
\end{equation*}
Together with the H\"older inequality and Lemmas \ref{lem:E1} and 
\ref{lem:E2}, this readily provides the bound
\begin{equation*}
	\begin{split}
	&\int_0^T\int_0^L p(\vr_h)\vr_h~dxdt \\
	&\qquad \leq \frac{\mathcal{M}}{L}
	\|\vr_h\|_{L^\infty(0,T;L^\gamma(\Om))}^\gamma
	+ \frac{\mu}{\eps}\left\|(u_h)_x\right\|_{L^2(0,T;L^2(\Om))}^2 \\
	&\qquad \quad+ \eps \left\|\vr_h\right\|_{L^{\gamma+1}(0,T;L^{\gamma+1}(\Om))}^{\gamma+1}
	 + \frac{\mathcal{M}C}{L}\left\|\vr_h u_h^2\right\|_{L^1(0,T;L^1(\Om))}\\
	&\qquad \quad +\mathcal{M}C\|\vr_h \widehat u_h\|_{L^\infty(0,T;L^1(\Om))} 
	 + (\Delta x)^\frac{2\gamma-3}{2\gamma}C.
	\end{split}
\end{equation*}	
The proof is completed by fixing $\epsilon$ sufficiently small.

\end{proof}

\section{Weak convergence}
In this section, we will pass to the limit
in the numerical method and prove 
that the limit is almost a weak solution 
to the compressible Navier-Stokes equations. 
Our starting point is that Corollary \ref{cor:energy}
allow us to assert the existence of functions
\begin{equation}\label{eq:limit}
	\begin{split}
		u &\in L^2(0,T;W^{1,2}_0(\Om)), \\
		\vr &\in L^\infty(0,T;L^\gamma(\Om)), \\
	\overline{p(\vr)} &\in L^{\frac{\gamma+1}{\gamma}}(0,T;L^{\frac{\gamma+1}{\gamma}}(\Om))
	\end{split}
\end{equation}
and a subsequence $h_j \rightarrow 0$, such that
\begin{equation}\label{eq:conv}
	\begin{split}
			\vr_h &\weakstar \vr \text{ in $L^\infty(0,T;L^\gamma(\Om))$},\\
			u_h &\weak u \text{ in $L^2(0,T;W^{1,2}_0(\Om))$}, \\
			p(\vr_h) &\weak \overline{p(\vr)}
			\text{ in $L^{\frac{\gamma+1}{\gamma}}(0,T;L^{\frac{\gamma+1}{\gamma}}(\Om))$}.
	\end{split}
\end{equation}
Note that we cannot make the identification $\overline{p(\vr)} = p(\vr)$ 
as this would require the density to converge strongly. 
We will prove that this is indeed true in the next section.

To conclude convergence of the product 
terms appearing in the method, we shall need the following 
lemma from \cite{Karlsen2}:
\begin{lemma}\label{lemma:aubin}
Given $T>0$ and a small number $h>0$, write 
$[0,T) = \cup_{k=1}^M[t_{k-1}, t_{k})$ with $t_{k} = hk$ and $Mh = T$. 
Let $\{f_{h}\}_{h>0}^\infty$, $\{g_{h}\}_{h>0}^\infty $ be 
two sequences such that
the mappings $t \mapsto g_{h}(t,x)$ and $t\mapsto f_{h}(t,x)$ 
are constant on each interval $(t_{k-1}, t_{k}]$
and assume that $\{f_{h}\}_{h>0}^\infty$, $\{g_{h}\}_{h>0}^\infty $
converges weakly to
$f$ and $g$ in $L^{p_{1}}(0,T;L^{q_{1}}(\Om))$ and 
$L^{p_{2}}(0,T;L^{q_{2}}(\Om))$, respectively, 
where $1 < p_{1},q_{1}< \infty$ and
$
\frac{1}{p_{1}} + \frac{1}{p_{2}} = \frac{1}{q_{1}} + \frac{1}{q_{2}} = 1.
$
If $\partial_t^k g_h \inb L^1(0,T;W^{-1,1}(\Om))$
and $h^\alpha |f(\cdot, x+h)- f(\cdot, x)| \inb L^{p_2}(0,T;L^{q_2}(\Om))$, for
some $\alpha < 1$, 
then $g_{h}f_{h} \weak gf$ in the sense of distributions on $\Dom$.
\end{lemma}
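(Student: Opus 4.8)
The plan is to prove this compactness lemma by a regularization-in-time argument, which is the standard trick for Aubin--Lions type results adapted to the piecewise-constant-in-time setting. The obstruction to passing to the limit in the product $g_h f_h$ is that weak convergence does not commute with products; we must upgrade one factor to strong convergence in a suitable topology. Since $g_h$ has controlled discrete time derivative, it is $g_h$ that we will render ``almost'' strongly convergent, while $f_h$ only needs the stated equi-continuity in space.

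First I would fix a test function $\psi \in C_0^\infty(\Dom)$ and aim to show $\int_0^T\int_0^L g_h f_h \psi \to \int_0^T\int_0^L g f \psi$. The idea is to mollify $f_h$ in space by introducing $f_h^\delta(t,x) = \frac{1}{\delta}\int_0^\delta f_h(t, x+s)\, ds$ (or a smooth convolution), and to write the difference $g_h f_h - g f$ by inserting and subtracting $g_h f_h^\delta$, $g f^\delta$, and $g^\delta f^\delta$. The spatial equi-continuity hypothesis $h^\alpha|f(\cdot, x+h) - f(\cdot,x)| \inb L^{p_2}(0,T;L^{q_2}(\Om))$ with $\alpha<1$ is precisely what controls $\|f_h - f_h^\delta\|$: because the spatial grid size is $h$ and $\alpha<1$, the mollification error vanishes as $h\to 0$ for each fixed $\delta$, uniformly enough to be absorbed. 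The cross term $\int g_h (f_h - f_h^\delta)\psi$ is then estimated by H\"older using the conjugacy relation $\tfrac{1}{p_1}+\tfrac{1}{p_2} = \tfrac{1}{q_1}+\tfrac{1}{q_2}=1$ and the uniform bound on $g_h$.

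The heart of the matter is the term involving $g_h f_h^\delta$ versus $g f^\delta$: here the smoothed factor $f_h^\delta$ is Lipschitz in $x$ (for fixed $\delta$), so that $f_h^\delta \psi$ can serve as an admissible test object against $g_h$. The bound $\partial_t^k g_h \inb L^1(0,T;W^{-1,1}(\Om))$ together with the weak convergence $g_h \weak g$ gives, via the piecewise-constant-in-time structure, that the time-averaged pairings $\langle g_h, \phi\rangle$ converge for spatially smooth $\phi$; combined with the weak convergence of $f_h^\delta \to f^\delta$, one passes to the limit in $g_h f_h^\delta$ for each fixed $\delta$ by an argument in the spirit of the discrete Aubin--Lions compactness (this is exactly where the discrete time-derivative control substitutes for genuine temporal compactness). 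I would carry out the limits in the order: first send $h\to 0$ with $\delta$ fixed, using the time-derivative bound on $g_h$ and spatial smoothness of $f_h^\delta$; then send $\delta \to 0$, using the spatial equi-continuity of $f$ to kill the mollification error on the limit side.

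The main obstacle I expect is the interchange-of-limits step, namely justifying that the $h\to 0$ limit of $g_h f_h^\delta$ really equals $g f^\delta$ uniformly enough in $\delta$ that the subsequent $\delta\to 0$ passage is legitimate. The delicate point is that $f_h^\delta$ depends on $h$ through both the mollification and the underlying grid; one must verify that for fixed $\delta$ the function $f_h^\delta \psi$ is close in the dual norm $W^{1,\infty}$ (or $C^1$) to a genuinely $h$-independent test function, so that the discrete time-derivative bound can be invoked against a fixed test object. Making the scaling $h^\alpha$ with $\alpha<1$ interact correctly with the mollification width $\delta$ is the crux: since the argument only needs $f^\delta$ and not $f_h^\delta$ to be regular, I would route the regularity through the \emph{limit} $f$ and use the equi-continuity hypothesis only at the end. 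Because the excerpt cites this lemma directly to \cite{Karlsen2}, I would not belabor the routine H\"older estimates and instead emphasize the mollification-and-interchange structure, referring to that reference for the technical details.
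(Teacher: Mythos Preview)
The paper does not prove this lemma at all; it is imported directly from \cite{Karlsen2} and used as a black box, so there is no in-paper argument to compare your sketch against. You correctly noticed this yourself in your final paragraph.

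On its own terms, your outline---spatially regularize one factor, exploit the discrete time-derivative bound on $g_h$ to obtain strong-in-time convergence of the regularized object, and absorb the regularization remainder via the spatial translate hypothesis on $f_h$---is precisely the architecture of the proof in \cite{Karlsen2} and of discrete Aubin--Lions arguments generally. One slip worth flagging: you claim the mollification error $\|f_h-f_h^\delta\|$ ``vanishes as $h\to 0$ for each fixed $\delta$.'' That is the wrong direction. The translate hypothesis bounds shifts at the \emph{grid scale} $h$; iterating it to a fixed macroscopic shift $\delta$ does not produce something small in $h$. The correct sequencing is to pass $h\to 0$ first in the regularized pairing $\int g_h f_h^\delta\psi$ (this is where $f_h^\delta\psi$ being $W^{1,\infty}$ lets the $\partial_t^k g_h \inb L^1(W^{-1,1})$ bound bite), and only then send $\delta\to 0$, controlling the remainder $\int g_h(f_h-f_h^\delta)\psi$ uniformly in $h$ by the translate estimate. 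The exponent condition $\alpha<1$ is calibrated so that when $\delta$ is coupled to $h$ (or taken to zero after $h$), the residual carries a positive power of the small parameter. Apart from that reversed order-of-limits description, your plan matches the reference.
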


\begin{lemma}\label{lem:product}
Given the convergences \eqref{eq:conv},
\begin{equation}\label{eq:conv-2}
	\begin{split}
		\vr_h u_h &\weak \vr u \text{ in $L^2(0,T;L^\gamma(\Om))$}, \\
		\vr_h \widehat u_h &\weakstar \vr u
		 	\text{ in $L^\infty(0,T;L^\frac{2\gamma}{\gamma + 1}(\Om))$}, \\
		\vr_h \widehat u_h u_h,~ \vr_h |\widehat u_h|^2,~  \vr_h |u_h|^2 &\weak \vr u^2
		\text{ in $L^1(0,T;L^{\frac{2\gamma}{\gamma+1}}(\Om))$}.
	\end{split}
\end{equation}
\end{lemma}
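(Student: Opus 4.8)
The plan is to obtain each convergence in two stages: first identify the distributional limit of the relevant product by appealing to the Aubin-type Lemma \ref{lemma:aubin}, and then upgrade this to the asserted weak (or weak-$\star$) convergence by combining it with the uniform bounds of Corollary \ref{cor:energy} and the uniqueness of limits. A preliminary observation I will use throughout is that $\widehat u_h$ and $u_h$ share the weak limit $u$: since on each cell $\widehat u_h$ is the mean of the affine function $u_h$, one computes $\|u_h - \widehat u_h\|_{L^2(0,T;L^2(\Om))} \leq C\,(\Delta x)\,\|(u_h)_x\|_{L^2(0,T;L^2(\Om))} \to 0$, the last factor being bounded by Corollary \ref{cor:energy}.

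For the first two convergences I apply Lemma \ref{lemma:aubin} with the compact-in-time factor $g_h = \vr_h$ and the compact-in-space factor $f_h = u_h$ (respectively $f_h = \widehat u_h$). The hypotheses hold with $q_1 = \gamma$, $q_2 = \gamma/(\gamma-1)$ and any finite conjugate pair $(p_1,p_2)$ with $p_1$ large: the density satisfies $\partial_t^h \vr_h \inb L^1(0,T;W^{-1,1}(\Om))$ by \eqref{time:cont}, while the spatial translation control required of the velocity follows from its uniform $L^2(0,T;W^{1,2}_0(\Om))$ bound. Lemma \ref{lemma:aubin} then yields $\vr_h u_h \weak \vr u$ and $\vr_h \widehat u_h \weak \vr u$ in the sense of distributions. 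Since Corollary \ref{cor:energy} gives $\vr_h u_h \inb L^2(0,T;L^\gamma(\Om))$ and $\vr_h \widehat u_h \inb L^\infty(0,T;L^{2\gamma/(\gamma+1)}(\Om))$, extracting weakly, respectively weak-$\star$, convergent subsequences in these spaces and matching their limits against the distributional one forces the full sequences to converge as claimed.

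The triple products are the delicate part, and the main obstacle is that Lemma \ref{lem:timecont} supplies discrete-time control for $\vr_h \widehat u_h$ but \emph{not} for $\vr_h u_h$. Consequently I must factor each triple product so that the momentum $\vr_h \widehat u_h$ plays the role of the compact-in-time factor $g_h$. For $\vr_h \widehat u_h u_h = (\vr_h \widehat u_h)\,u_h$ and $\vr_h |\widehat u_h|^2 = (\vr_h \widehat u_h)\,\widehat u_h$ I invoke Lemma \ref{lemma:aubin} once more, now with $g_h = \vr_h \widehat u_h$, whose limit $\vr u$ was just identified and whose discrete time derivative is controlled by \eqref{time:moment}, and with $f_h = u_h$ (resp.\ $\widehat u_h$); the conjugate exponents are taken around $q_1 = 2\gamma/(\gamma+1)$, and the spatial translation hypothesis is again furnished by the $W^{1,2}$ bound on the velocity. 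This gives distributional convergence of both products to $\vr u \cdot u = \vr u^2$.

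It remains to treat $\vr_h |u_h|^2$, which admits no such factorization. I write $\vr_h |u_h|^2 = \vr_h |\widehat u_h|^2 + \vr_h (u_h - \widehat u_h)(u_h + \widehat u_h)$ and show the remainder vanishes. Testing against a bounded $\phi$, using Hölder in space with the $L^\gamma$ bound on $\vr_h$ and the $L^\infty$ bound on $u_h + \widehat u_h$, the inverse estimate $\|u_h - \widehat u_h\|_{L^\infty(\Om)} \leq C\,(\Delta x)^{1/2}\|(u_h)_x\|_{L^2(\Om)}$, and finally Cauchy--Schwarz in time, bounds the remainder by $C\,(\Delta x)^{1/2} \to 0$, all relevant norms being controlled by Corollary \ref{cor:energy}. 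Hence $\vr_h |u_h|^2$ shares the limit of $\vr_h |\widehat u_h|^2$, namely $\vr u^2$. Each triple product being bounded in $L^1(0,T;L^{2\gamma/(\gamma+1)}(\Om))$ by Corollary \ref{cor:energy}, the identified distributional limits then yield the stated convergences, completing the proof.
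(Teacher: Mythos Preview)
Your proof is correct and follows essentially the same route as the paper: apply Lemma~\ref{lemma:aubin} with $g_h=\vr_h$, $f_h=u_h$ for the first product, then with $g_h=\vr_h\widehat u_h$, $f_h=u_h$ for the triple product, using the time-derivative controls \eqref{time:cont}--\eqref{time:moment} and the $W^{1,2}$ bound on the velocity; the $\widehat u_h$ versus $u_h$ discrepancy is handled in both proofs via the Poincar\'e-type bound $\|u_h-\widehat u_h\|_{L^2}\leq C\,\Delta x\,\|(u_h)_x\|_{L^2}$. The only cosmetic differences are that for $\vr_h\widehat u_h$ the paper uses the splitting \eqref{eq:quick} rather than a second application of Lemma~\ref{lemma:aubin}, and that you spell out the argument for $\vr_h|u_h|^2$ explicitly whereas the paper dismisses it with ``similarly''.
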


\begin{proof}
From Corollary \ref{cor:energy} and Lemma \ref{lem:timecont}, we have that \\
$u_h \inb L^2(0,T;W^{1,2}_0(\Om))$ and
$\partial_t^h \vr_h \inb L^2(0,T;W^{-1,\gamma}(\Om))$.
We can then apply Lemma \ref{lemma:aubin}, with 
$g_h = \vr_h$ and $f_h = u_h$, to conclude
$$
	\vr_h u_h \weak \vr u\text{ in $L^2(0,T;L^\gamma(\Om))$}.
$$
Next, we notice that
\begin{equation*}
	\begin{split}
		\left\|\widehat u_h - u_h\right\|_{L^2(\Om)}^2
		&= \sum_i \int_{x_{i-1/2}}^{x_{i+1/2}}
		\left|\frac{1}{\Delta x}\int_{x_{i-1/2}}^{x_{i+1/2}}
		u_h(y)~dy - u_h(x) \right|^2~dx	\\
		&\leq (\Delta x)^2 \|(u_h)_x\|_{L^2(\Om)},
	\end{split}
\end{equation*}
where the last inequality is the Poincar\'e inequality. 
Hence, by writing
\begin{equation}\label{eq:quick}
\vr_h \widehat u_h  = \vr_h u_h + (\vr_h \widehat u_h - u_h),	
\end{equation}
and passing to the limit, we conclude the
second convergence of \eqref{eq:conv-2}.

From Corollary \ref{cor:energy} and Lemma \ref{lem:timecont}, 
we have that \\ $\vr_h|u_h|^2 \inb L^1(0,T;L^\frac{2\gamma}{\gamma+1}(\Om))$
and $\partial_t (\vr_h \widehat u_h) \inb
L^1(0,T;W^{-1,1}(\Om))$. 
Lemma \ref{lemma:aubin} is then applicable, 
with $g_h = \vr_h \widehat u_h$ and $f_h = u_h$, yielding
\begin{equation*}
	\vr_h \widehat u_h u_h
	\weak \vr u^2  \text{ in $L^1(0,T;L^{\frac{2\gamma}{\gamma+1}}(\Om))$}.
\end{equation*}

The remaining convergences 
can be obtained  similarly (i.e \eqref{eq:quick}).
\end{proof}

\subsection{Convergence of the density scheme}
We now prove that the limit $(\vr, u)$ is 
a weak solution of the continuity equation.

\begin{lemma}\label{lem:cont}
Let $(\vr_h, u_h)$ be the numerical approximation 
constructed using Definition \ref{def:scheme}
and \eqref{def:ext1}-\eqref{def:ext2}. 
The limit $(\vr, u)$ is a weak solution 
of the continuity equation. That is,
\begin{equation*}
	\vr_t + (\vr u)_x = 0,
\end{equation*}
in the sense of distributions on $[0,T)\times [0,L]$.
\end{lemma}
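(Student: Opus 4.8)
The plan is to pass to the limit in the discrete continuity scheme \eqref{num:cont} tested against a smooth compactly supported function. First I would fix $\phi \in C_0^\infty([0,T)\times[0,L])$ and introduce its cell averages $\phi_i^k$ (consistent with the projection already used in Lemma \ref{lem:timecont}). Multiplying \eqref{num:cont} by $\Delta t\,\Delta x\,\phi_i^k$ and summing over $i$ and $k$, I would perform summation by parts in both time and space to transfer the discrete derivatives onto $\phi$. Because of the implicit time-stepping and the compact support of $\phi$ at $t=T$, the time-summation-by-parts produces the discrete analog
\begin{equation*}
	\Delta t \Delta x \sum_{k}\sum_i \vr_i^k \, \partial_t^k \phi_i^{k+1} + \Delta x \sum_i \vr_i^0 \phi_i^0 + \Delta t\Delta x \sum_k \sum_i \up(\vr^k u^k)_{i+1/2}\,\partial_{i+1/2}\phi^k,
\end{equation*}
which is the discrete version of the weak continuity formulation, including the initial-data term.

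Next I would rewrite each discrete sum as a space-time integral of the piecewise-constant extensions, so that the whole identity becomes an integral statement involving $\vr_h$, $\up(\vr^k u^k)$, and discrete difference quotients of $\phi$. The point is that the difference quotients of the smooth $\phi$ converge \emph{uniformly} to the corresponding derivatives $\phi_t$ and $\phi_x$ as $h\to 0$; this is just Taylor expansion and costs a factor $O(h)$. For the linear density term, the weak-$\star$ convergence $\vr_h \weakstar \vr$ from \eqref{eq:conv} pairs against the strongly (uniformly) convergent test function to give $\int_0^T\int_0^L \vr\,\phi_t$. The initial term converges by the definition of $\vr_i^0$ as cell averages of $\vr_0$, so it tends to $\int_0^L \vr_0\,\phi(0,\cdot)$.

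The only genuinely nontrivial term is the convective flux $\up(\vr^k u^k)_{i+1/2}$, which I must show converges weakly to $\vr u$. Here I would rely on Lemma \ref{lem:product}, which already gives $\vr_h u_h \weak \vr u$ in $L^2(0,T;L^\gamma(\Om))$. The work is to show that the piecewise-constant extension of the upwind flux $\up(\vr^k u^k)$ has the same weak limit as $\vr_h u_h$; the difference between them consists of terms like $[\vr_{i+1}-\vr_i]\,u_{i+1/2}^{-}$ (the upwind ``jump'' correction), and I would estimate that this difference tends to zero in, say, $L^1$ or as a distribution, using the uniform bounds of Corollary \ref{cor:energy} together with the factor $\Delta x$ gained from the spatial difference of $\vr$. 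The main obstacle is precisely this consistency of the upwind flux with the product limit: since $\vr_h$ does not converge strongly at this stage, one cannot identify the limit of a product naively, and care is needed to confirm that the numerical-viscosity correction terms vanish rather than contributing a spurious limit. Once that is established, collecting the three limits yields the weak continuity equation, and I would note that the argument holds for an arbitrary test function, completing the proof.
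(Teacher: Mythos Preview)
Your overall strategy is the paper's: test the scheme against cell averages of a smooth $\phi$, sum by parts in space and time, and isolate the upwind ``jump'' correction as the only nontrivial error. The gap is in how you plan to kill that correction. You say the terms $(\vr_{i+1}^k-\vr_i^k)\,u_{i+1/2}^{k,-}$ vanish thanks to ``the factor $\Delta x$ gained from the spatial difference of $\vr$'' together with the $L^p$ bounds of Corollary~\ref{cor:energy}. But $\vr_h$ carries no spatial regularity beyond $L^\infty_t L^\gamma_x$, so $\vr_{i+1}-\vr_i$ supplies \emph{no} power of $\Delta x$. If you merely bound $|\vr_{i+1}-\vr_i|\leq|\vr_{i+1}|+|\vr_i|$ and use the $\Delta x$ that actually comes from $\phi_i^k-\phi(k\Delta t,x_{i\pm1/2})$, a direct H\"older estimate leaves you with an $O(1)$ quantity; the error does not tend to zero on Corollary~\ref{cor:energy} alone.

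The missing ingredient is the numerical dissipation from the full energy identity, not the $L^p$ corollary. The paper writes the correction as
\[
P_1(\phi)=\Delta t\,\Delta x\sum_{k,i}\bigl[(\partial_{i-1/2}\vr^k)u_{i-1/2}^{k,+}+(\partial_{i+1/2}\vr^k)u_{i+1/2}^{k,-}\bigr]\bigl(\phi_i^k-\phi(k\Delta t,x_{i-1/2})\bigr),
\]
applies Cauchy--Schwarz with weight $p''(\vr^\dagger)$, and recognizes one factor as the artificial-viscosity term $\mathcal{N}_2$ of Proposition~\ref{lem:energy}. Since $\mathcal{N}_2$ is bounded by \eqref{eq:energy}, and the other factor collects the remaining $(\Delta x)^{1/2}$ from the smoothness of $\phi$, one gets $|P_1(\phi)|\leq C h^{1/2}$. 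So the step that would fail in your outline is precisely the consistency of the upwind flux; it requires the weighted density-jump control built into the energy estimate, which you have not invoked.
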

\begin{proof}
	{\bf 1.} We first claim that 
	$(\vr_h, u_h)$ satisfies the equation
	\begin{equation}\label{wnum:cont}
		\begin{split}
			\int_0^T\int_0^L \partial_t^h(\vr_h)\phi 
			- \vr_h u_h\,\phi_x~dxdt
			= P_1(\phi), 
		\end{split}
	\end{equation}
	for all $\phi \in C_0^\infty([0,T)\times [0,L])$, where
	$P_1(\phi) = \Delta t \sum_k P_1^k(\phi)$ is given by \eqref{wnum:cont-err}.
	
	 To prove the claim \eqref{wnum:cont}, let $\phi \in C_0^\infty([0,T)\times [0,L])$ be 
	arbitrary and define 
	\begin{equation*}
		\phi_i^k = \frac{1}{\Delta t \Delta x}\int_{k\Delta t}^{(k+1)\Delta t}\int_{x_{i-1/2}}^{x_{i+1/2}}
		\phi(t,x)~dxdt, \quad i=0, \ldots, N-1.
	\end{equation*}
	Now, multiply \eqref{num:cont} with $\phi_i^k\Delta x$, and sum over all $i$, 
	to discover
	\begin{equation}\label{wnum:cont1}
		\begin{split}
			\Delta x\sum_i (\partial_t^k \vr_i)\phi_i^k
			&= \Delta x\sum_i \left(\partial_i \up(\vr^k u^k)\right)\phi_i^k \\
			&=  \sum_i \up(\vr^k u^k)_{i+1/2}\left(\phi^k_i - \phi(\Delta t k, x_{i+1/2})\right) \\
			&\qquad\qquad -\up(\vr^k u^k)_{i-1/2}\left(\phi^k_i - \phi(\Delta t k, x_{i-1/2})\right).
		\end{split}
	\end{equation}
	To proceed, we add and subtract to derive the identities
	\begin{equation}\label{wnum:cont2}
		\begin{split}
			&\up(\vr^k u^k)_{i+1/2}\left(\phi^k_i - \phi(\Delta t k, x_{i+1/2})\right) \\
			&\quad= \vr^k_i u_{i+1/2}^k\left(\phi^k_i - \phi(\Delta t k, x_{i+1/2})\right)\\
			&\qquad \quad+ (\vr^k_{i+1} -\vr_i^k)u_{i+1/2}^-\left(\phi^k_i - \phi(\Delta t k, x_{i+1/2})\right),
		\end{split}
	\end{equation}
	and
	\begin{equation}\label{wnum:cont3}
		\begin{split}
			&\up(\vr^k u^k)_{i-1/2}\left(\phi^k_i - \phi(\Delta t k, x_{i-1/2})\right) \\
			&\quad= \vr^k_i u_{i-1/2}^k\left(\phi^k_i - \phi(\Delta t k, x_{i-1/2})\right)\\
			&\qquad \quad- (\vr^k_{i} -\vr_{i-1}^k)u_{i-1/2}^+\left(\phi^k_i - \phi(\Delta t k, x_{i-1/2})\right).
		\end{split}
	\end{equation}
	By applying \eqref{wnum:cont2}-\eqref{wnum:cont3} in \eqref{wnum:cont1}, we
	obtain
	\begin{equation*}
		\begin{split}
			\Delta x\sum_i (\partial_t^k \vr_i)\phi_i^k
			&= \sum_i\vr^k_i u_{i+1/2}^k\left(\phi^k_i - \phi(\Delta t k, x_{i+1/2})\right) \\
			&\qquad -\vr^k_i u_{i-1/2}^k\left(\phi^k_i - \phi(\Delta t k, x_{i-1/2})\right) \\
			&\qquad +(\vr^k_{i+1} -\vr_i^k)u_{i+1/2}^-\left(\phi^k_i - \phi(\Delta t k, x_{i+1/2})\right) \\
			&\qquad +(\vr^k_{i} -\vr_{i-1}^k)u_{i-1/2}^+\left(\phi^k_i - \phi(\Delta t k, x_{i-1/2})\right).
		\end{split}
	\end{equation*}
	We then apply Green's theorem to obtain
	\begin{equation}\label{wnum:cont4}
		\begin{split}
			&\Delta x\sum_i (\partial_t^k \vr_i)\phi_i^k \\
			&= \sum_i \int_{x_{i-1/2}}^{x_{i+1/2}} \vr_i^k \frac{d}{dx}
			\left(u_h^k(\phi^k_i - \phi(\Delta t k, x))\right)~ dx + P^k_1(\phi),
		\end{split}
	\end{equation}
	where we have defined
	\begin{equation}\label{wnum:cont-err}
		\begin{split}
			P^k_1(\phi) 
			&= \Delta x\sum_i \left[\left(\partial_{i-1/2}\vr^k\right)u_{i-1/2}^++
			\left(\partial_{i+1/2}\vr^k\right)u_{i+1/2}^-\right] \\
			&\qquad \qquad \times\left(\phi^k_i - \phi(\Delta t k, x_{i-1/2})\right).
		\end{split}	
	\end{equation}
	To proceed, we observe the two identities
	\begin{equation}\label{wnum:cont5}
		\Delta t\Delta x\sum_k\sum_i (\partial_t^k \vr_i)\phi_i^k
		= \int_0^T \int_0^L (\partial_t^k \vr_h) \phi~dxdt
	\end{equation}
	and
	\begin{equation}\label{wnum:cont6}
		\begin{split}
			&\Delta t\sum_k\sum_i \int_{x_{i-1/2}}^{x_{i+1/2}} \vr_i^k \frac{d}{dx}
			\left(u_h^k(\phi^k_i - \phi(\Delta t k, x))\right)~ dx \\
			&\quad= \int_0^T\int_0^L \vr_h (u_h)_x (\phi_i - \phi(x)) - \vr_h u_h \phi_x~dx 
			= -\int_0^T\int_0^L \vr_h u_h \phi_x~dx,
		\end{split}
	\end{equation}
	where the last equality follows by definition of $\phi_i^k$
	
	By combining \eqref{wnum:cont4}-\eqref{wnum:cont6}, we obtain
	\eqref{wnum:cont}, which was our claim.
	
	\vspace{0.5cm}
	{\bf2.} Next, we prove that the $P_1(\phi)$ converges 
	to zero as $h\rightarrow 0$. More precisely, we claim that
	\begin{equation}\label{eq:P1}
		\left|P_1(\phi)\right| \leq h^\frac{1}{2}C.
	\end{equation} 
	To prove this, we apply the Cauchy-Schwartz inequality to obtain
	\begin{equation}\label{wnum:P1}
		\begin{split}
			P_1(\phi)
			&=\Delta x\Delta t \sum_k\sum_i \left[\left(\partial_{i-1/2}\vr^k\right)u_{i-1/2}^++
			\left(\partial_{i+1/2}\vr^k\right)u_{i+1/2}^-\right] \\
			&\qquad \qquad \times\left(\phi^k_i - \phi(\Delta t k, x_{i-1/2})\right) \\
			&\leq C\left(\Delta t (\Delta x)^2\sum_k \sum_i
			p''(\vr_\dagger^k)\left|\partial_{i+1/2}\vr_h^k\right|^2|u_{i+1/2}|\right)^\frac{1}{2} \\
			&\qquad \times (\Delta x)^{-1/2}\left(\Delta t \Delta x\sum_k \sum_i
			|u_{i+1/2}|(\vr_\dagger^k)^{2-\gamma} \right.\\
			&\qquad \qquad \qquad \qquad \qquad
			\left.\times \left|\phi^k_i - \phi(\Delta t k, x_{i-1/2})\right|^2\right)^\frac{1}{2}.
		\end{split}
	\end{equation}
	Note that the first term after the inequality is bounded by the energy 
	estimate \eqref{eq:energy}. Now, the H\"older and Poincar\'e inequalities 
	provides the bound
	\begin{equation*}
		\begin{split}
			&(\Delta x)^{-1/2}\left(\Delta t \Delta x\sum_k \sum_i
			|u_{i+1/2}|(\vr_\dagger^k)^{2-\gamma} 
			\left|\phi^k_i - \phi(\Delta t k, x_{i-1/2})\right|^2\right)^\frac{1}{2} \\
			&\leq (\Delta x)^\frac{1}{2}C\|\Grad \phi\|_{L^\infty(0,T;L^\infty(\Om))}
			\|u_h\|_{L^2(0,T;L^\infty(\Om))}^\frac{1}{2}\|\vr_h\|_{L^\infty(0,T;L^\gamma(\Om))} \\
			&\leq (\Delta x)^\frac{1}{2}C\|\Grad \phi\|_{L^\infty(0,T;L^\infty(\Om))},
		\end{split}
	\end{equation*}
	where we have used Corollary \ref{cor:energy} to conclude the last bound. 
	Together with \eqref{wnum:P1}, this proves our claim \eqref{eq:P1}.
	
	\vspace{0.5cm}
	{\bf 3.} Let us now  send $h\rightarrow 0$ in \eqref{wnum:cont} 
	and thereby conclude the proof. For this purpose, 
	we shall need the following elementary identity 
	\begin{equation*}
		\begin{split}
			&\int_0^T\int_0^L \partial_t^k (\vr_h) \phi~dxdt 
			= - \int_0^T\int_0^L \vr_h(t-\Delta t)\partial_t^k \phi~dxdt - \int_0^L \vr_h^0 \phi(\Delta t)~dx,
		\end{split}
	\end{equation*}
	where we have also used that $\phi(T, \cdot) = 0$. With this 
	identity, \eqref{wnum:cont} tell us that
	\begin{equation*}
		\begin{split}
		&- \int_0^T\int_0^L \vr_h(t-\Delta t)\partial_t^k \phi~dxdt
			- \vr_h u_h\,\phi_x~dxdt \\
		&\qquad \qquad	= \int_0^L\vr_h^0 \phi(\Delta t)~dx +   P_1(\phi), \quad \forall \phi \in C_0^\infty([0,T)\times \overline{\Om}).
		\end{split}
	\end{equation*}
	From \eqref{eq:P1}, we have that $P_1(\phi) \rightarrow 0$ as $h \rightarrow 0$. 
	Hence, there is no problem with sending $h \rightarrow 0$, using \eqref{eq:conv} and \eqref{eq:conv-2}, 
	to conclude that
	\begin{equation*}
		\vr_t + (\vr u)_x = 0, 
	\end{equation*}
	in the sense of distributions. This concludes the proof.
\end{proof}

\subsection{Weak limit of the momentum scheme}
In this subsection, we pass to the limit in the momentum scheme 
to conclude that $(\vr, u)$ is almost a weak 
solution of the momentum equation.
We begin by deriving an integral formulation 
of the momentum scheme \eqref{num:moment}.

\begin{lemma}\label{lemma:wnum-mom}
	Let $(\vr_h, u_h)$ be the numerical solution 
	constructed through Definition \ref{def:scheme}
	and \eqref{def:ext1}-\eqref{def:ext2}. Then, 
	for all sufficiently smooth $v$,
\begin{equation}\label{wnum:moment}
	\begin{split}
		&\int_0^t\int_0^L \partial_t^h(\vr_h \widehat u_h)v
		- \vr_h (\widehat u_h)^2  ~v_x~dxdt \\
		&\qquad =\int_0^T\int_0^L (p(\vr_h) - \mu (u_h)_x)v_x~dxdt
		+ P_2(v),
	\end{split}
\end{equation}
where the numerical error term is given by
\begin{equation*}
	\begin{split}
		P_2(v) &=-\Delta t\sum_k\sum_i\int_{x_{i-1/2}}^{x_{i+1/2}}\partial_t^k(\vr_i \widehat u_i)\left(\frac{v_{i-1/2}^k+v_{i+1/2}^k}{2}-v(x)\right)~ dx \\
		&\quad+\frac{\Delta t}{2}\sum_k\sum_i\left(\vr_{i+1}^k\widehat u_{i+1}^k - \vr_{i}^k\widehat u_{i}^k\right)\left[u_{i+1/2}^+(v_{i+3/2} - v_{i+1/2})\right. \\
		&\qquad \qquad\qquad \qquad \qquad \qquad \qquad -\left. u_{i+1/2}^-(v_{i+1/2} - v_{i-1/2})\right].
	\end{split}
\end{equation*}
\end{lemma}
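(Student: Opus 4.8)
The plan is to mirror the derivation of the discrete continuity formulation in Lemma~\ref{lem:cont}. First I would test the momentum scheme \eqref{num:moment} with $v_{i+1/2}\,\Delta x$, sum over $i=0,\dots,N-1$ and over $k$ (with weight $\Delta t$), and use the boundary conditions $u^k_{-1/2}=u^k_{N-1/2}=0$ together with the support of $v$ to discard boundary contributions. This produces an identity of the schematic form $T+C=V-P$, where $T$ is the tested discrete time derivative, $C$ the tested convective term, and $V,P$ the tested viscous and pressure terms. The lemma then follows by expressing each of these four sums through the extended fields and rearranging into \eqref{wnum:moment}.

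Next I would dispatch the three routine terms. For $T$, the reindexing underlying \eqref{en:1} gives
$$
\Delta x\sum_i\partial_t^k\!\left(\frac{\vr_i\widehat u_i+\vr_{i+1}\widehat u_{i+1}}{2}\right)v_{i+1/2}
=\Delta x\sum_i\partial_t^k(\vr_i\widehat u_i)\,\frac{v_{i-1/2}+v_{i+1/2}}{2}.
$$
Since the extension $\vr_h\widehat u_h$ is piecewise constant and equals $\vr_i\widehat u_i$ on $[x_{i-1/2},x_{i+1/2})$, the right-hand side equals $\int_0^T\!\int_0^L\partial_t^h(\vr_h\widehat u_h)v\,\dxdt$ plus the quadrature defect in which $\tfrac12(v_{i-1/2}+v_{i+1/2})$ replaces the true value $v(x)$; this defect is precisely the first term of $P_2$. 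For $V$ and $P$ I would apply summation by parts; the decisive observation is that $\partial_i u^k_h$ and $p(\vr^k_i)$ are constant on each cell while $\int_{x_{i-1/2}}^{x_{i+1/2}}v_x\,\dx=v_{i+1/2}-v_{i-1/2}$ by the fundamental theorem of calculus. Hence $V$ and $P$ convert \emph{exactly}, with no error contribution, into $-\mu\int(u_h)_xv_x$ and $\int p(\vr_h)v_x$ respectively, which is consistent with $P_2$ containing no viscous or pressure defect.

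The crux is the convective term $C$. A summation by parts over the two--cell flux stencil gives
$$
C=-\tfrac{\Delta t}{2}\sum_k\sum_i\up(\vr^k\widehat u^k u^k)_{i+1/2}\,(v_{i+3/2}-v_{i-1/2}).
$$
I would then write $v_{i+3/2}-v_{i-1/2}=(v_{i+3/2}-v_{i+1/2})+(v_{i+1/2}-v_{i-1/2})$ and expand the upwind flux $\up(\vr^k\widehat u^k u^k)_{i+1/2}=\vr^k_i\widehat u^k_i\,u_{i+1/2}^{+}+\vr^k_{i+1}\widehat u^k_{i+1}\,u_{i+1/2}^{-}$. Reindexing the resulting face sums and collecting everything attached to a single cell increment $v_{i+1/2}-v_{i-1/2}$, the velocity splitting $u^{\pm}_{i+1/2}$ recombines through $\widehat u_i=\tfrac12(u_{i-1/2}+u_{i+1/2})$, so that the leading contribution is exactly the piecewise--constant reconstruction $\int_0^T\!\int_0^L\vr_h(\widehat u_h)^2v_x\,\dxdt$ (using once more that $\int_{x_{i-1/2}}^{x_{i+1/2}}v_x\,\dx=v_{i+1/2}-v_{i-1/2}$). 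The remaining terms, proportional to the momentum jump $\vr^k_{i+1}\widehat u^k_{i+1}-\vr^k_i\widehat u^k_i$ and paired with $u^{\pm}_{i+1/2}$ and the one--sided increments of $v$, assemble into the second term of $P_2$.

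Finally I would substitute the four evaluations into $T+C=V-P$ and isolate the combination $\int\partial_t^h(\vr_h\widehat u_h)v-\vr_h(\widehat u_h)^2v_x$ on the left, obtaining \eqref{wnum:moment}. The single genuine difficulty is the convective reconstruction: because the momentum $\vr\widehat u$ is carried at the cell centers while the transport velocity $u_{i+1/2}$ sits at the faces, and because the flux difference spans two cells, the bookkeeping must keep the averaging $\widehat u_i$ and the upwinding signs aligned so that the physical flux collapses cleanly onto the cell--constant field $\vr_h(\widehat u_h)^2$ and every leftover lands in $P_2$. Everything else is the same quadrature estimate and summation--by--parts manipulation as in the proof of Lemma~\ref{lem:cont}.
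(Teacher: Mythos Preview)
Your proposal is correct and follows essentially the same route as the paper's proof. The paper tests \eqref{num:moment} with $v_{i+1/2}^k\Delta x$, handles the viscous and pressure terms exactly by summation by parts (your step for $V,P$), rewrites the time term via the reindexing \eqref{en:1} to produce $H_1^k$ (your quadrature defect), and for the convective term splits $v_{i+3/2}-v_{i-1/2}$ into two one-cell increments and then uses the add--and--subtract identities
\[
\up(\vr^k\widehat u^k u^k)_{i+1/2}=\vr^k_{i+1}\widehat u^k_{i+1}u_{i+1/2}-(\vr^k_{i+1}\widehat u^k_{i+1}-\vr^k_i\widehat u^k_i)u_{i+1/2}^+
=\vr^k_{i}\widehat u^k_{i}u_{i+1/2}+(\vr^k_{i+1}\widehat u^k_{i+1}-\vr^k_i\widehat u^k_i)u_{i+1/2}^-,
\]
which after a shift $i\mapsto i-1$ in one of the resulting sums recombines the leading parts into $\sum_i\vr_i^k|\widehat u_i^k|^2(v_{i+1/2}-v_{i-1/2})$ via $\widehat u_i=\tfrac12(u_{i-1/2}+u_{i+1/2})$; this is exactly the manipulation you outline.
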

\begin{proof}
Let $v \in C_0^\infty([0,T)\times \Om)$ be arbitrary and 
introduce the notation 
$$
v_{i-1/2}^k = v\left(k\Delta t, x_{i-1/2}\right), \quad i=0,\ldots, N, \quad k=1, \ldots, M .
$$
Now, multiply \eqref{num:moment} with $v_{i+1/2}^k\Delta x$, 
and sum over all $i$ to obtain 
\begin{equation}\label{wnum:mom1}
	\begin{split}
		&\frac{\Delta x}{2}\sum_i \partial_t^k(\vr_i \widehat u_i +  \vr_{i+1/2} \widehat u_{i+1/2})
		v_{i+1/2}^k \\
		&\quad + \frac{1}{2}\sum_i \left(\up\left(\vr^k\widehat u^k u^k\right)_{i+3/2}-\up\left(\vr^k\widehat u^k u^k\right)_{i-1/2}\right)\, v_{i+1/2}^k \\
		&\quad = \Delta x \sum_i \left(\mu\Delta_{i+1/2}u^k - \partial_{i+1/2}p(\vr_h^k)\right)\, v_{i+1/2}^k.
	\end{split}
\end{equation}
We will now write each integral term in \eqref{wnum:mom1} in the form \eqref{wnum:moment}. Let us 
begin with the right-hand side. 

{\bf 1.} Using summation by parts, we calculate
\begin{equation}\label{wnum:mom-2}
	\begin{split}
		&\Delta x \sum_i \left(\mu\Delta_{i+1/2}u^k - \partial_{i+1/2}p(\vr_h^k)\right)\, v_{i+1/2}^k \\
		&\qquad=\sum_i \left(p(\vr_i^k) - \mu \partial_i u^k\right)\left(v_{i+1/2}^k-v_{i-1/2}^k\right) \\
		&\qquad=\sum_i \left(p(\vr_i^k) - \mu \partial_i u^k\right)\int_{x_{i-1/2}}^{x_{i+1/2}}v_x(k\Delta t , x)~dx \\
		&\qquad = \int_0^L (p(\vr^k_h) - \mu (u^k_h)_x)v_x(k\Delta t, x)~dx,
	\end{split}
\end{equation}
which is of the form we wanted. 

{\bf 2.} For the time derivative term, we perform summation by parts to 
deduce 
\begin{equation}\label{wnum:mom-3}
	\begin{split}
		&\frac{\Delta x}{2}\sum_i \partial_t^k(\vr_i \widehat u_i +  \vr_{i+1/2} \widehat u_{i+1/2})
		v_{i+1/2}^k \\
		&\quad = \sum_i\int_{x_{i-1/2}}^{x_{i+1/2}} \partial_t^k(\vr_i \widehat u_i)\left(\frac{v_{i-1/2}^k+v_{i+1/2}^k}{2}\right)~ dx \\
		&\quad =  \int_0^L\partial_t^k(\vr_h \widehat u_h)v~ dx \\
		&\qquad\qquad+ \sum_i\int_{x_{i-1/2}}^{x_{i+1/2}}\partial_t^k(\vr_i \widehat u_i)\left(\frac{v_{i-1/2}^k+v_{i+1/2}^k}{2}-v(x)\right)~ dx \\
		&\quad =: \int_0^L\partial_t^k(\vr_h \widehat u_h)v~ dx + H_1^k
	\end{split}
\end{equation}

{\bf 3.} For the last term, we begin applying summation by parts 
\begin{equation}\label{wnum:mom3}
	\begin{split}
		&\frac{1}{2}\sum_i\left(\up\left(\vr^k\widehat u^k u^k\right)_{i+3/2}-\up\left(\vr^k\widehat u^k u^k\right)_{i-1/2}\right)\, v_{i+1/2}^k \\
		&\qquad= - \frac{1}{2}\sum_i \up(\vr^k \widehat u^k u^k)(v_{i+3/2} - v_{i-1/2}) \\
		&\qquad= - \frac{1}{2}\sum_i \up(\vr^k \widehat u^k u^k)(v_{i+3/2} - v_{i+1/2}) \\
		&\qquad \qquad -\frac{1}{2}\sum_i \up(\vr^k \widehat u^k u^k)(v_{i+1/2} - v_{i-1/2}).
	\end{split}
\end{equation}
Now, by adding and subtracting, we develop the identity
\begin{equation}\label{wnum:mom-id1}
	\begin{split}
		&\up(\vr^k \widehat u^k u^k)(v_{i+3/2} - v_{i+1/2}) \\
		&\qquad= \vr^k_{i+1}\widehat u^k_{i+1} u_{i+1/2}(v_{i+3/2} - v_{i+1/2}) \\
		&\qquad\quad -\left(\vr^k_{i+1}\widehat u^k_{i+1} - \vr^k_i\widehat u^k_{i}\right)u_{i+1/2}^+(v_{i+3/2} - v_{i+1/2}).
	\end{split}
\end{equation}
Similarly, we derive the identity
\begin{equation}\label{wnum:mom-id2}
	\begin{split}
		&\up(\vr^k \widehat u^k u^k)(v_{i+1/2} - v_{i-1/2}) \\
		&\qquad= \vr^k_{i}\widehat u^k_{i} u_{i+1/2}(v_{i+1/2} - v_{i-1/2}) \\
		&\qquad\quad +\left(\vr^k_{i+1}\widehat u^k_{i+1} - \vr^k_i\widehat u^k_{i}\right)u_{i+1/2}^-(v_{i+1/2} - v_{i-1/2}).
	\end{split}
\end{equation}
By applying \eqref{wnum:mom-id1} and \eqref{wnum:mom-id2} in \eqref{wnum:mom3}, we obtain
\begin{equation}\label{wnum:mom-4}
	\begin{split}
		&\frac{1}{2}\sum_i\left(\up\left(\vr^k\widehat u^k u^k\right)_{i+3/2}-\up\left(\vr^k\widehat u^k u^k\right)_{i-1/2}\right)\, v_{i+1/2}^k \\
		&\qquad = -\sum_i\vr_i^k |\widehat u_i^k|^2 \int_{x_{i-1/2}}^{x_{i+1/2}}v_x~dx \\
		&\qquad \quad +\frac{1}{2}\sum_i\left(\vr_{i+1}^k\widehat u_{i+1}^k - \vr_{i}^k\widehat u_{i}^k\right)\left[u_{i+1/2}^+(v_{i+3/2} - v_{i+1/2})\right. \\
		&\qquad \qquad\qquad \qquad \qquad \qquad \qquad -\left. u_{i+1/2}^-(v_{i+1/2} - v_{i-1/2})\right] \\
		&\qquad =: - \int_0^L \vr_h^k \left(\widehat u_h^k\right)^2v_x~ dx + H^k_2.
	\end{split}
\end{equation}

{\bf 4.} By applying \eqref{wnum:mom-2}, \eqref{wnum:mom-3}, and \eqref{wnum:mom-4}, to \eqref{wnum:mom1}, 
multiplying with $\Delta t$, and summing over all $k$,we discover
\begin{equation*}
	\begin{split}
		&\int_0^T\int_0^L\partial_t^k(\vr_h \widehat u_h)v - \vr_h \left(\widehat u_h\right)^2v_x~ dxdt +  \Delta t\sum_k\left(E_1^k+E_2^k\right) \\
		&\qquad =\int_0^T\int_0^L (p(\vr_h) - \mu (u_h)_x)v_x~dxdt,
	\end{split}
\end{equation*}
which is \eqref{wnum:moment} with
\begin{equation*}
	P_2(v) = - \Delta t\sum_k \left(H_1^k+H_2^k\right).
\end{equation*}
This concludes the proof.
\end{proof}

Next, we prove that the error term in \eqref{wnum:moment}
converges to zero as $h \rightarrow 0$.

\begin{lemma}\label{lem:P2}
Let $P_2(v)$, be as in the previous lemma. There 
is a constant $C > 0$, independent of $h$, such that
\begin{equation*}
	\left|P_2(v)\right| \leq h^\frac{1}{4}C\|v_x\|_{L^\infty(0,T;L^\infty(\Om))},
\end{equation*}
for all $v \in L^\infty(0,T;W^{1, \infty}_0(\Om))$.
\end{lemma}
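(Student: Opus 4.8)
The plan is to estimate the two groups of terms comprising $P_2(v) = -\Delta t\sum_k(H_1^k+H_2^k)$ separately, extracting in each case one power of $\Delta x$ from the Lipschitz regularity of $v$ and pairing the remaining discrete factors against the numerical diffusion functionals $\mathcal{N}_1,\dots,\mathcal{N}_4$ that are controlled by Proposition \ref{lem:energy}. The complementary factors are then absorbed using Corollary \ref{cor:energy}, inverse estimates, and the elementary bound $\Delta x\sum_i(\vr_i^k)^{2-\gamma}\le C\|\vr_h^k\|_{L^\gamma(\Om)}^{2-\gamma}\le C$, which holds for $\gamma>1$ on the bounded domain. This is the same template as in Lemma \ref{lem:E1}.

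For the time-derivative group I would first apply the discrete product rule $\partial_t^k(\vr_i\widehat u_i)=\widehat u_i^k\,\partial_t^k\vr_i+\vr_i^{k-1}\,\partial_t^k\widehat u_i$, so that the two pieces carry exactly the weights appearing in $\mathcal{N}_1$ and $\mathcal{N}_3$. Since $v$ is Lipschitz, the cell weight $\tfrac{1}{2}(v_{i-1/2}^k+v_{i+1/2}^k)-v(x)$ is pointwise $O(\Delta x\|v_x\|_{L^\infty})$, contributing one factor $\Delta x$ upon integration against the cell-constant difference. Multiplying and dividing by $\sqrt{p''(\vr_i^\ddagger)}$ and applying Cauchy--Schwarz, the $\mathcal{N}_1$-factor supplies $(\Delta t)^{-1/2}=(\Delta x)^{-1/2}$, while the complementary factor $\Delta t\Delta x\sum_{k,i}|\widehat u_i^k|^2(\vr_i^\ddagger)^{2-\gamma}$ stays bounded thanks to $\|u_h\|_{L^2(0,T;L^\infty(\Om))}\le C$ and $\Delta x\sum_i(\vr_i)^{2-\gamma}\le C$. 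The $\partial_t^k\widehat u_i$ piece is treated identically against $\mathcal{N}_3$. Each therefore contributes $\Delta x\cdot(\Delta x)^{-1/2}$, giving $\Delta t\sum_k|H_1^k|\le C(\Delta x)^{1/2}\|v_x\|_{L^\infty}$.

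For the convective group each difference $v_{i+3/2}^k-v_{i+1/2}^k$ is again $O(\Delta x\|v_x\|_{L^\infty})$, so it remains to estimate $\Delta t\Delta x\sum_{k,i}\big|\partial_{i+1/2}(\vr^k\widehat u^k)\big|\,|u_{i+1/2}^k|$ up to the factor $\Delta x\|v_x\|_{L^\infty}$, and I would split $\partial_{i+1/2}(\vr\widehat u)=\vr_{i+1}\partial_{i+1/2}\widehat u+\widehat u_i\partial_{i+1/2}\vr$. The velocity-gradient part I would bound directly by the inverse estimate $\|\partial_x u_h^k\|_{L^\infty}\le C(\Delta x)^{-1/2}\|\partial_x u_h^k\|_{L^2}$ together with the one-dimensional Poincar\'e bound $\|u_h^k\|_{L^\infty}\le C\|\partial_x u_h^k\|_{L^2}$; this leaves $\|u_h\|_{L^2(0,T;W^{1,2}_0(\Om))}^2$ bounded and again yields the rate $(\Delta x)^{1/2}$. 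A direct estimate is cleaner here than pairing against $\mathcal{N}_4$, because the upwind mismatch between $\vr_{i+1}|u_{i+1/2}^k|$ and $|\up(\vr^k u^k)_{i+1/2}|$ makes the $\mathcal{N}_4$-pairing produce an uncontrolled ratio $\vr_{i+1}/\vr_i$.

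The decisive term is the density-gradient part $\Delta t\Delta x\sum_{k,i}|\widehat u_i^k|\,|\partial_{i+1/2}\vr^k|\,|u_{i+1/2}^k|$. Here I would split $|u_{i+1/2}^k|^{1/2}$ between the two factors and apply Cauchy--Schwarz, so that one factor is $\big(\Delta t(\Delta x)^2\sum_{k,i}p''(\vr_\dagger^k)|\partial_{i+1/2}\vr^k|^2|u_{i+1/2}^k|\big)^{1/2}=(\Delta x)^{-1/2}\mathcal{N}_2^{1/2}\le C(\Delta x)^{-1/2}$, while the other, using $\Delta x\sum_i(\vr_i)^{2-\gamma}\le C$, reduces to $\|u_h\|_{L^3(0,T;L^\infty(\Om))}^{3/2}$. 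This $L^3(L^\infty)$ norm is where the genuine loss occurs: interpolating $\|u_h\|_{L^3(L^\infty)}^3\le\|u_h\|_{L^\infty(L^\infty)}\|u_h\|_{L^2(L^\infty)}^2$ and using the temporal inverse estimate $\|u_h\|_{L^\infty(L^\infty)}=\max_k\|u_h^k\|_{L^\infty}\le C(\Delta t)^{-1/2}\|u_h\|_{L^2(L^\infty)}$ gives $\|u_h\|_{L^3(L^\infty)}^3\le C(\Delta x)^{-1/2}$, hence this factor is $O((\Delta x)^{-1/4})$. The product of the two factors is $O((\Delta x)^{-3/4})$, and multiplying by the $\Delta x$ gained from $v$ leaves precisely $(\Delta x)^{1/4}$. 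The main obstacle is exactly this balance: the negative power $(\Delta x)^{-1/2}$ from unweighting $\mathcal{N}_2$ forces the upgrade of $u_h$ from $L^2$ to $L^3$ in time, which is the slowest of the four contributions and therefore fixes the final rate, giving $|P_2(v)|\le h^{1/4}C\|v_x\|_{L^\infty(0,T;L^\infty(\Om))}$.
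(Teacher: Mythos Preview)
Your proof is correct and follows essentially the same route as the paper's: the same split into time-derivative and convective groups, the same product-rule decomposition of $\partial_t^k(\vr_i\widehat u_i)$ paired against $\mathcal{N}_1$ and $\mathcal{N}_3$, and the same Cauchy--Schwarz pairing of the density-gradient piece against $\mathcal{N}_2$ with the $L^3$-in-time inverse estimate on $u_h$ fixing the limiting rate $h^{1/4}$. The only cosmetic difference is in the velocity-gradient piece of the convective group, where the paper applies an inverse estimate on the density ($L^\gamma\to L^2$, giving rate $(\Delta x)^{3/2-1/\gamma}$) rather than your inverse estimate on $\partial_x u_h$ ($L^2\to L^\infty$, giving the $\gamma$-independent rate $(\Delta x)^{1/2}$); both contributions are subordinate to the $h^{1/4}$ term coming from $K_3$.
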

\begin{proof}
By definition, we have that
	\begin{equation}\label{eq:P2}
		\begin{split}
			P_2(v) &=-\Delta t\sum_k\sum_i\int_{x_{i-1/2}}^{x_{i+1/2}}\partial_t^k(\vr_i \widehat u_i)\left(\frac{v_{i-1/2}^k+v_{i+1/2}^k}{2}-v(x)\right)~ dx \\
			&\quad+\frac{\Delta t}{2}\sum_k\sum_i\left(\vr_{i+1}^k\widehat u_{i+1}^k - \vr_{i}^k\widehat u_{i}^k\right)\left[u_{i+1/2}^+(v_{i+3/2} - v_{i+1/2})\right. \\
			&\qquad \qquad\qquad \qquad \qquad \qquad \qquad -\left. u_{i+1/2}^-(v_{i+1/2} - v_{i-1/2})\right] \\
			&=: S_1 + S_2.
		\end{split}
	\end{equation}	
Let us now bound the $S_1$ and $S_2$ term separately. 

\vspace{0.5cm}
{\it {\bf 1.} Bound on $S_1$:} By adding and subtracting, we write
\begin{equation}\label{wnum:S1}
	\begin{split}
		S_1 &= -\Delta t\sum_k\sum_i\int_{x_{i-1/2}}^{x_{i+1/2}}\partial_t^k(\vr_i \widehat u_i)\left(\frac{v_{i-1/2}^k+v_{i+1/2}^k}{2}-v(x)\right)~ dx \\
		&= -\Delta t\sum_k\sum_i\int_{x_{i-1/2}}^{x_{i+1/2}}\vr^{k-1}_i \partial_t^k(\widehat u_i)\left(\frac{v_{i-1/2}^k+v_{i+1/2}^k}{2}-v(x)\right)~ dx \\
		&\qquad -\Delta t\sum_k\sum_i\int_{x_{i-1/2}}^{x_{i+1/2}}\widehat u^{k}_i \partial_t^k(\vr_i)\left(\frac{v_{i-1/2}^k+v_{i+1/2}^k}{2}-v(x)\right)~ dx \\
		&=: K_1 + K_2.
	\end{split}
\end{equation}
To bound the $K_1$ term, we apply the Cauchy-Schwartz and
Poincar\'e inequalities to obtain
\begin{align}
		\left|K_1\right| &=\left|\Delta t\sum_k\sum_i\int_{x_{i-1/2}}^{x_{i+1/2}}\vr^{k-1}_i \partial_t^k(\widehat u_i)\left(\frac{v_{i-1/2}^k+v_{i+1/2}^k}{2}-v(x)\right)~ dx\right|\nonumber\\
		&\leq (\Delta t)^{-\frac{1}{2}}\left((\Delta t)^2 \Delta x \sum_{k}\sum_i \vr_i^{k-1}\left|\partial_t^k \widehat u_i^k\right|^2\right)^\frac{1}{2} \label{wnum:K1}\\
		&\qquad \qquad \times \left(\Delta t\Delta x\sum_k\sum_i \vr_h^{k-1}\left|\left(\frac{v_{i-1/2}^k+v_{i+1/2}^k}{2}-v(x)\right)~ dx\right|^2\right)^\frac{1}{2} \nonumber\\
		&\leq (\Delta t)^{-\frac{1}{2}}C \mathcal{M}^\frac{1}{2} (\Delta x)\|v_x\|_{L^\infty(0,T;L^2(\Om))} 
		\leq h^\frac{1}{2}C\|v_x\|_{L^\infty(0,T;L^\infty(\Om))},\nonumber
\end{align}
where we have used that the term involving $\partial_t^k \widehat u_h$ is bounded 
by the energy estimate \eqref{eq:energy}.

To bound the $K_2$ term, we apply yet another application 
of the Cauchy-Schwartz and Poincar\'e inequalities to discover
\begin{align}
		|K_2| &= \left|\Delta t\sum_k\sum_i\int_{x_{i-1/2}}^{x_{i+1/2}}\widehat u^{k}_i \partial_t^k(\vr_i)\left(\frac{v_{i-1/2}^k+v_{i+1/2}^k}{2}-v(x)\right)~ dx \right| \nonumber\\
		&\leq (\Delta t)^{-\frac{1}{2}}\left((\Delta t)^2 \Delta x\sum_{k}\sum_i p''(\vr_i^\ddagger)\left|\partial_t^k \vr_h\right|^2\right)^\frac{1}{2}\label{wnum:K2}\\
		&\quad \quad \times \left(\Delta t \Delta x\sum_{k}\sum_i (\vr_i^\ddagger)^{2-\gamma}|\widehat u_i^k|^2\left|\left(\frac{v_{i-1/2}^k+v_{i+1/2}^k}{2}-v(x)\right)~ dx\right|^2 \right)^\frac{1}{2}\nonumber\\
		&\leq (\Delta t)^{-\frac{1}{2}}C\|u_h\|_{L^2(0,T;L^\infty(\Om))}\mathcal{M}^\frac{1}{2}(\Delta x)\|v_x\|_{L^\infty(0,T;L^\infty(\Om))}. \nonumber
\end{align}

Setting \eqref{wnum:K1}-\eqref{wnum:K2} in \eqref{wnum:S1} yields
\begin{equation}\label{eq:S1}
	S_1 \leq h^\frac{1}{2}C\|v_x\|_{L^\infty(0,T;L^2\infty(\Om))}.
\end{equation}

\vspace{0.5cm}
{\it {\bf 2.} Bound on $S_2$:}
By adding and subtracting, we calculate
\begin{equation}\label{wnum:S2}
	\begin{split}
		|S_2| &= \left|\frac{\Delta t}{2}\sum_k\sum_i\left(\vr_{i+1}^k\widehat u_{i+1}^k - \vr_{i}^k\widehat u_{i}^k\right)\left[u_{i+1/2}^+(v_{i+3/2} - v_{i+1/2})\right.\right. \\
		&\qquad \qquad\qquad \qquad \qquad \qquad \qquad \left.-\left. u_{i+1/2}^-(v_{i+1/2} - v_{i-1/2})\right]\right| \\
		&\leq (\Delta x)\|v_x\|_{L^\infty(0,T;L^\infty)}
		 \left(\Delta x\Delta t\sum_k\sum_i \left|\partial_{i+1/2}\vr_h\right||u_{i+1/2}^k||\widehat u_i^k| \right. \\
		&\qquad \qquad\qquad \qquad\qquad \qquad \left. + \left|\partial_{i+1/2}u_h^k\right||u_{i+1/2}|\vr_{i+1}^k\right) \\
		&=: (\Delta x)\|v_x\|_{L^\infty(0,T;L^\infty)}\left(K_3 + K_4\right).
	\end{split}
\end{equation}
To bound the $K_3$ term, we apply the Cauchy-Schwartz inequality, 
followed by the H\" older inequality, and obtain
\begin{equation}\label{wnum:K3}
	\begin{split}
		K_3 &=\Delta t\Delta x\sum_k\sum_i \left|\partial_{i+1/2}\vr_h\right||u_{i+1/2}^k||\widehat u_i^k| \\
		&\leq (\Delta t)^{-\frac{1}{2}}\left((\Delta t)^2 \Delta x\sum_k \sum_ip''(\vr_\dagger^k)\left|\partial_{i+1/2}\vr_h\right|^2|u_{i+1/2}^k|\right)^\frac{1}{2} \\
		&\qquad \qquad \times \left(\Delta t \Delta x \sum_k\sum_i |u_{i+1/2}^k| \left|\widehat u_i^2\right|^2 (\vr_\dagger^k)^{2-\gamma}\right)^{\frac{1}{2}}\\
		&\leq (\Delta t)^{-\frac{1}{2}}C \mathcal{M}^\frac{1}{2}\|u_h\|_{L^3(0,T;L^\infty(\Om))}^\frac{3}{2} \\
		&\leq (\Delta t)^{-\frac{1}{2}}C\, (\Delta t)^{\frac{3}{2}\left(\frac{1}{3} - \frac{1}{2}\right)}\|u_h\|_{L^2(0,T;L^\infty(\Om))}^\frac{3}{2} 
		\leq (\Delta t)^{-\frac{3}{4}}C,
	\end{split}
\end{equation}
where we have also utilized a standard inverse estimate in time and the energy estimate \eqref{eq:energy}.

Next, we apply the H\"older inequality to deduce
\begin{equation}\label{wnum:K4}
	\begin{split}
		K_4 &= \Delta t\Delta x\sum_k\sum_i\left|\partial_{i+1/2}u_h^k\right||u_{i+1/2}|\vr_{i+1}^k \\
		&\leq \|(u_h)_x\|_{L^2(0,T;L^2(\Om))}\|u_h\|_{L^2(0,T;L^\infty(\Om))}\|\vr_h\|_{L^\infty(0,T;L^2(\Om))} \\
		&\leq (\Delta x)^{\frac{1}{2}- \frac{1}{\gamma}}\|\vr_h\|_{L^\infty(0,T;L^\gamma(\Om))}C,
	\end{split}
\end{equation}
where we have utilized an inverse estimate and the energy estimate \eqref{eq:energy}.

By setting \eqref{wnum:K3} and \eqref{wnum:K4} in \eqref{wnum:S2}, we conclude
\begin{equation}\label{eq:S2}
	|S_2| \leq h^\frac{1}{4}\|v_x\|_{L^\infty(0,T;L^\infty(\Om))}.
\end{equation}

Finally, we apply \eqref{eq:S1} and \eqref{eq:S2} in \eqref{eq:P2}
to conclude the proof.

\end{proof}

\begin{lemma}\label{lem:moment}
Let $(\vr_h, u_h)$ be the numerical approximation 
constructed using Definition \ref{def:scheme}
and \eqref{def:ext1}-\eqref{def:ext2}. 
The limit $(\vr, u, \overline{p(\vr)})$ satisfies 
\begin{equation}\label{wcont:moment}
	(\vr u)_t + (\vr u^2)_x = \mu u_{xx} - \overline{p(\vr)}_x,
\end{equation}
in the sense of distributions on $[0,T)\times \Om$.
\end{lemma}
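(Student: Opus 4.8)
The plan is to follow the blueprint of the continuity proof (Lemma \ref{lem:cont}): the exact integral identity \eqref{wnum:moment} from Lemma \ref{lemma:wnum-mom} already encodes the scheme, and Lemma \ref{lem:P2} shows its error term satisfies $P_2(v)\to 0$, so all that remains is to integrate by parts in time and pass to the limit using the product convergences of Lemma \ref{lem:product}. First I would fix $v\in C_0^\infty([0,T)\times\Om)$ and rewrite the discrete time-derivative term in \eqref{wnum:moment} exactly as in Step~3 of Lemma \ref{lem:cont}, obtaining
\[
	\int_0^T\!\!\int_0^L \partial_t^h(\vr_h\widehat u_h)\,v~dxdt
	= -\int_0^T\!\!\int_0^L (\vr_h\widehat u_h)(t-\Delta t)\,\partial_t^h v~dxdt
	- \int_0^L \vr_h^0\widehat u_h^0\, v(\Delta t,\cdot)~dx ,
\]
where the contribution at $t=T$ vanishes because $v(T,\cdot)=0$. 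This step is essential: Lemma \ref{lem:timecont} only provides the weak bound $\partial_t^h(\vr_h\widehat u_h)\inb L^1(0,T;W^{-1,\gamma}(\Om))$, so the time derivative cannot be paired with $v$ directly and must first be shifted onto the smooth test function.

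Next I would send $h\to 0$ term by term. The time term converges to $-\int_0^T\!\int_0^L \vr u\, v_t~dxdt$ by combining the weak-$\star$ convergence $\vr_h\widehat u_h \weakstar \vr u$ in $L^\infty(0,T;L^{\frac{2\gamma}{\gamma+1}}(\Om))$ from Lemma \ref{lem:product} with the strong, uniform convergence $\partial_t^h v \to v_t$; the time shift $t\mapsto t-\Delta t$ is harmless for the same reason. The initial-data term converges to $\int_0^L \vr_0 u_0\, v(0,\cdot)~dx$ by the definition of the numerical initial data in Definition \ref{def:scheme}. On the right-hand side of \eqref{wnum:moment}, the convective flux passes to the limit via $\vr_h(\widehat u_h)^2 \weak \vr u^2$ in $L^1(0,T;L^{\frac{2\gamma}{\gamma+1}}(\Om))$ (Lemma \ref{lem:product}), the pressure via $p(\vr_h)\weak \overline{p(\vr)}$ in $L^{\frac{\gamma+1}{\gamma}}$ from \eqref{eq:conv}, and the viscous term via $u_h\weak u$ in $L^2(0,T;W^{1,2}_0(\Om))$ paired against $v_x$. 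Finally $P_2(v)\to 0$ by Lemma \ref{lem:P2}. Collecting the limits gives
\[
	\int_0^T\!\!\int_0^L \vr u\, v_t + \vr u^2 v_x - \mu u_x v_x + \overline{p(\vr)}\,v_x~dxdt
	= -\int_0^L \vr_0 u_0\, v(0,\cdot)~dx ,
\]
which is precisely the distributional formulation of \eqref{wcont:moment} on $[0,T)\times\Om$.

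The genuinely hard analytic content --- the compactness required to identify the weak limit of the quadratic momentum flux $\vr_h(\widehat u_h)^2$ --- is already packaged into Lemma \ref{lem:product} (via the Aubin--Lions-type Lemma \ref{lemma:aubin}), so the remaining difficulty is purely bookkeeping: one must check that each weak or weak-$\star$ limit is paired against a factor converging strongly in the conjugate Lebesgue space. The most delicate such pairing is the time term, where the merely weak-$\star$ limit of $\vr_h\widehat u_h$ in $L^\infty(0,T;L^{\frac{2\gamma}{\gamma+1}}(\Om))$ is tested against $\partial_t^h v$; this is legitimate because $v$ is smooth and compactly supported in time, so $\partial_t^h v$ converges strongly in $L^1\big(0,T;L^{(\frac{2\gamma}{\gamma+1})'}(\Om)\big)$. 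A minor additional point is that \eqref{wnum:moment} and Lemma \ref{lem:P2} are stated for smooth (respectively $W^{1,\infty}_0$) test functions, which is exactly the class needed here, so no density argument is required.
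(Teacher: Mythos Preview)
Your proposal is correct and follows essentially the same approach as the paper: start from the integral identity \eqref{wnum:moment} of Lemma \ref{lemma:wnum-mom}, shift the discrete time derivative onto $v$ by summation by parts, invoke Lemma \ref{lem:P2} for $P_2(v)\to 0$, and pass to the limit term by term using \eqref{eq:conv} and Lemma \ref{lem:product}. Your write-up is in fact more careful than the paper's own proof in spelling out which weak limits are paired against which strongly convergent factors.
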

\begin{proof}
We begin by writing \eqref{wnum:moment} in the form
\begin{equation}\label{wnum:moment2}
	\begin{split}
		&-\int_0^T\int_0^L (\vr_h \widehat u_h)(-\Delta t, \cdot)\, \partial_t^h v
		+ \vr_h (\widehat u_h)^2  ~v_x~dxdt \\
		& =\int_0^T\int_0^L (p(\vr_h) - \mu (u_h)_x)v_x~dxdt
		+\int_0^L (\vr_h^0\widehat u_h^0)~v(\Delta, \cdot)~dx
		+ P_2(v).
	\end{split}
\end{equation}
The convergence \eqref{eq:conv} provides $p(\vr_h) \weak \overline{p(\vr)}$
and from Lemma \ref{lem:P2}, we have that $P_2(v)\rightarrow 0$
as $h \rightarrow 0$. Moreover, from Lemma \ref{lem:product}, 
we know that $\vr_h \widehat u_h \weakstar \vr u$ in $L^\infty(0,T;L^{2\gamma}{\gamma+1}(\Om))$
and  $\vr_h |\widehat u_h|^2 \weak \vr u^2$ in $L^1(0,T;L^\frac{2\gamma}{\gamma+1}(\Om))$.
Hence, there is no problems with sending $h \rightarrow 0$ in \eqref{wnum:moment2}
to conclude the proof.
\end{proof}

\subsection{The effective viscous flux limit}
We are going to end this section by passing to the limit
in the effective viscous flux equation \eqref{eq:eff}.
To achieve this, we shall need the following 
result.
\begin{lemma}\label{lem:}
Let $\{f_h\}_{h>0}$, $\{g_h\}_{h>0}$ be two sequences satisfying
\begin{itemize}
	\item For each fixed $h>0$, $f_h$ and $g_h$ are piecewise 
	linear and piecewise constant, respectively, with respect to our grid with $\Delta t = \Delta x = h$:
	\begin{equation*}
	\qquad \quad g_h(t,x) = g_i^k, \quad v_h(t,x) = v_{i-1/2}^k + \frac{x-x_{i-1/2}}{\Delta x}(v_{i+1/2} - v_{i-1/2}),	
	\end{equation*}
	for $(t,x) \in [k\Delta t, (k+1)\Delta t)\times (x_{i-1/2}, x_{i+1/2})$, $\forall k$, $i$.
	
	\item As $h \rightarrow 0$, $f_h \weakstar f$ and $g_h \weakstar g$ 
	in $L^{\infty}(0,T;L^{p}(0,L))$ and \\ 
	$L^\infty(0,T;(0,L))$, respectively, 
	where $1/p + 1/q \leq 1$.
	
	\item The discrete time derivate of $f_h$ satisfies
	$$
		\partial_t^k v_h \inb L^1(0,T;W^{-1,1}(0,L)).
	$$
\end{itemize}
Then, for any $t \in (0,T)$, 
	\begin{equation}\label{jada}
		\lim_{h \rightarrow 0} \left(\Delta x\sum_i
		\partial_{i}\Delta^{-1}_{i+1/2}\left[v^k_i\right]g_i^k\right)
		 = \int_0^L \partial_x\Delta_D^{-1}[f(t)]\, g(t)~dx,
	\end{equation}
	where $k$ is given by $k = \lfloor t/h \rfloor$ and $\Delta_D$ denotes the Laplace
	operator with homogenous Dirichlet conditions.
\end{lemma}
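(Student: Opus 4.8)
The plan is to recognise the discrete sum as the space integral of a product, one factor of which is smoothed by the discrete inverse Laplacian and therefore enjoys compactness, while the other only converges weakly. Writing $w_i^k = \partial_i\Delta_{i+1/2}^{-1}[v_i^k]$ and letting $W_h$ denote its piecewise constant extension, the left-hand side of \eqref{jada} equals $\int_0^L W_h(t,\cdot)\, g_h(t,\cdot)~dx$ with $t\in[k\Delta t,(k+1)\Delta t)$. The first point I would record is that in one dimension the operator $v_h\mapsto \partial_i\Delta_{i+1/2}^{-1}[v_h]$ is nothing but a discrete primitive: solving the discrete Dirichlet problem is a double summation, and a direct computation gives $\partial_{i+1/2}w^k = v_{i+1/2}^k$, hence $w_{i+1}^k-w_i^k = \Delta x\, v_{i+1/2}^k$, while the Dirichlet condition fixes the free constant through $\Delta x\sum_i w_i^k = 0$. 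This identifies $W_h$ as the zero-mean discrete antiderivative of $v_h$ and is the exact discrete analogue of $\partial_x\Delta_D^{-1}$.

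From this representation the two compactness ingredients follow almost immediately. First, since the spatial increments obey $|w_{i+1}^k-w_i^k|\le \Delta x\,|v_{i+1/2}^k|$, a H\"older estimate yields a uniform (in $h$) spatial modulus of continuity, and together with the zero-mean normalisation and Poincar\'e this gives $W_h \inb L^\infty(0,T;C(\cOm))$. Secondly, because the discrete primitive commutes with $\partial_t^k$ and maps $W^{-1,1}$ into $L^1$, the hypothesis $\partial_t^k v_h \inb L^1(0,T;W^{-1,1}(\Om))$ upgrades to $\partial_t^k W_h \inb L^1(0,T;L^1(\Om))$. Feeding the uniform spatial regularity and this time-derivative bound into an Aubin--Lions argument (cf.\ Lemma \ref{lemma:aubin}), equivalently a Helly selection in time combined with the compact embedding $W^{1,p}\hookrightarrow\hookrightarrow C(\cOm)$, produces a subsequence along which $W_h(t,\cdot)\to W(t,\cdot)$ strongly in $C(\cOm)$ for (almost) every $t$.

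It then remains to identify the limit and to pass to the limit in the product. The limit is pinned down by the weak-$\star$ convergence $v_h\weakstar f$: since $W_h$ is obtained from $v_h$ by discrete double summation with the Dirichlet correction, the consistency of $\Delta_{i+1/2}^{-1}$ with $\Delta_D^{-1}$ (conveniently phrased through the duality of Lemma \ref{lem:Aop}) forces $W = \partial_x\Delta_D^{-1}[f]$, and since this limit is unique the whole family converges. For the product I would split
\begin{equation*}
	\int_0^L W_h(t)\,g_h(t)~dx = \int_0^L \big(W_h(t)-W(t)\big)g_h(t)~dx + \int_0^L W(t)\,g_h(t)~dx;
\end{equation*}
the first integral tends to zero because $W_h(t)\to W(t)$ strongly in $C(\cOm)$ while $g_h(t)$ stays bounded in $L^{q}(\Om)$, and the second converges to $\int_0^L W(t)g(t)~dx$ by the weak-$\star$ convergence $g_h\weakstar g$.

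The delicate point --- and the step I expect to be the main obstacle --- is the \emph{pointwise-in-time} nature of \eqref{jada}. The assumptions furnish only weak-$\star$ convergence in $L^\infty(0,T;\cdot)$ for both $v_h$ and $g_h$, which carries no information about individual time slices. The time-derivative bound rescues the smoothed factor $W_h$, whose BV-in-time control lets its slices converge for every $t$ via Helly; but no analogous control is assumed on $g_h$, so the term $\int_0^L W(t)g_h(t)~dx$ can only be handled in the integrated sense, and the pointwise statement must be read for almost every $t$, the representatives $f(t),g(t)$ being chosen accordingly. Making the identification $W=\partial_x\Delta_D^{-1}[f]$ rigorous, including the boundary correction hidden in the Dirichlet condition, is the other technical crux, which I would isolate as a short consistency lemma for the discrete inverse Laplacian.
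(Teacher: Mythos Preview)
Your strategy coincides with the paper's: exhibit the smoothed factor $W_h=\partial_i\Delta_{i+1/2}^{-1}[v_h]$ (the paper calls it $q_h$) as spatially regular with $L^1$-in-time discrete time derivative, deduce strong convergence uniformly in $t$, identify the limit as $\partial_x\Delta_D^{-1}[f]$, and then pair against the weakly convergent $g_h$. The paper differs only in two technical devices. First, to obtain a.e.\ convergence of $q_h$ it applies Lemma~\ref{lemma:aubin} with \emph{both} factors taken equal to $q_h$, so that $q_h^2\weak q^2$ in $\mathcal D'$ and hence $q_h\to q$ a.e.; your explicit discrete-primitive reading of $\partial_i\Delta_{i+1/2}^{-1}$ is a more transparent route to the same spatial compactness. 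Second, to promote this to time-uniform strong convergence the paper introduces the linear-in-time interpolant $q_h^L$ of $q_h$, notes that $\partial_t q_h^L=\partial_t^h q_h\inb L^1(0,T;L^1)$ forces $q_h^L\to q$ in $C(0,T;L^p)$, and then argues $\sup_t\|q_h^L-q_h\|_{L^p}\to 0$; this interpolant plays the role of your Helly selection and is worth knowing as a clean way to pass from piecewise-constant-in-time data to $C(0,T;\cdot)$ limits.

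Your flag on the pointwise-in-time issue with $g_h$ is correct and is precisely the soft spot in the paper's own argument: the final decomposition there is $\int_0^L q_h^L g_h\,dx+\int_0^L(q_h^L-q_h)g_h\,dx$, and the first term is passed to the limit without any comment on why $g_h(t)\weak g(t)$ at an individual time slice, which does not follow from weak-$\star$ convergence in $L^\infty(0,T;L^q)$ alone. In the only application (Lemma~\ref{lem:eff-cont}) one has $g_h=\vr_h$, for which Lemma~\ref{lem:timecont} supplies $\partial_t^h\vr_h\inb L^2(0,T;W^{-1,\gamma})$, so the gap closes there; but you are right that the lemma as stated needs either an ``a.e.\ $t$'' qualifier or an additional time-regularity hypothesis on $g_h$.
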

\begin{proof}
Let $q_h$ be the extension of $\partial_{i}\Delta^{-1}_{i+1/2}\left[v^k_{i+1/2}\right]$
to all of $[0,T)\times (0,L)$;
\begin{equation*}
	q_h(t,x) = \partial_{i}\Delta^{-1}_{i+1/2}\left[v^k_{i+1/2}\right], \quad 
\end{equation*}
$\forall (t,x) \in [k\Delta t, (k+1)\Delta t)\times (x_{i-1/2}, x_{i+1/2})$
and relevant $i,k$. From the requirements on $f_h$, it is clear that
$$
 \partial_{i+1/2}q_h, \quad \partial_t^k q_h \in_b L^1(0,T;L^1(0,L)),
$$
Then, we can apply Lemma \ref{lemma:aubin}, 
with $f_h = g_h = q_h$
to conclude 
that $q_h^2 \weak q^2$ in the sense of distributions. Thus, 
$q_h \rightarrow q$ a.e as $h \rightarrow 0$, 
where the convergence may take place along a subsequence. Observe
that the limit must satisfy
$$
	q = \partial_x \Delta_D^{-1}[v],
$$
where $\Delta_D$ is the Laplacian with Dirichlet conditions.

Next, let $q_h^L$ be the linear-in-time interpolation 
of $q_h$:
\begin{equation*}
	q^L_h(t, x) = q_h^k(x) 
	+ \frac{t-k\Delta t}{\Delta t}\left(q_h^{k+1}(x)-q_h^k(x)\right), \quad 
	t \in [k\Delta t, (k+1)\Delta t),
\end{equation*}
for $k= 0, \ldots, M-1$. Since 
we have that
\begin{equation*}
	\frac{\partial q^L_h(t,x)}{\partial t} = \partial_t^h q_h
	\inb L^1(0,T;L^1(0,L)),
\end{equation*}
we  have that $q_h^L \rightarrow q$ in $C(0,T;L^p(0,L))$
and $\sup_t\|q_h^L - q_h\|_{L^p(\Om)} \rightarrow 0$,
for any $p < \infty$. Finally, we write
\begin{equation*}
	\begin{split}
		&\Delta x\sum_i
		\partial_{i}\Delta^{-1}_{i+1/2}\left[v^k_i\right]g_i^k =
		\int_0^L q_h^L g_h~dx
		+ \int_0^L (q_h^L - q_h)g_h~dx,
	\end{split}
\end{equation*}
and send $h \rightarrow 0$ to discover \eqref{jada}.
\end{proof}

Equipped with the previous lemma, 
we now pass to the limit in the 
effective viscous flux equation \eqref{eq:eff}.

\begin{lemma}\label{lem:eff-cont}
Assume that $\gamma > \frac{3}{2}$ and
that the convergences \eqref{eq:conv} 
and \eqref{eq:conv-2} holds. Then,
for any $t \in (0,T)$,
\begin{equation}\label{eq:eff-cont}
	\begin{split}
		&-\int_0^t\int_0^L \overline{\left(\mu u_x - p\left(\vr\right)\right)\left(\vr - \frac{\mathcal{M}}{L}\right)}~dxdt \\
		&\qquad = \int_0^t\int_0^L \vr u^2~dxdt - \left.\int_0^L\partial_{x}\Delta^{-1}_D\left[\vr u\right]\vr~dxdt\right|_{s=0}^t,
	\end{split}
\end{equation}	
where the overline denotes the weak $L^1$ limit.
\end{lemma}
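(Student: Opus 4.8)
The plan is to pass to the limit, term by term, in the discrete effective viscous flux identity \eqref{eq:eff}, keeping $t\in(0,T)$ fixed and choosing $m=\lfloor t/h\rfloor$, so that the truncation time $(m+1)h\to t$. First I would recast the discrete left-hand side as a genuine space-time integral of the extended functions: since $\vr_h$ is piecewise constant and $\partial_i u_h^k$ agrees with $(u_h)_x$ on each cell, we have $-\Delta t\Delta x\sum_{k=0}^m\sum_i(\mu\partial_i u_h^k-p(\vr_i^k))(\vr_i^k-\tfrac{\mathcal{M}}{L})=-\int_0^{(m+1)h}\!\!\int_0^L(\mu(u_h)_x-p(\vr_h))(\vr_h-\tfrac{\mathcal{M}}{L})~dxdt$. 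The factor $\mu(u_h)_x-p(\vr_h)$ converges weakly by \eqref{eq:conv}, while $\vr_h-\tfrac{\mathcal{M}}{L}$ converges only weakly-$\star$ to $\vr-\tfrac{\mathcal{M}}{L}$. As neither factor converges strongly, the product cannot be identified with the product of the limits; instead, the energy estimate and the higher integrability of the density bound the product in $L^1(\Dom)$, so along a further subsequence it has a weak limit, which we simply \emph{define} to be the overlined quantity in \eqref{eq:eff-cont}. This gives the left-hand side, and the matching of $[0,(m+1)h)$ with $[0,t)$ is immediate from absolute continuity of the integral.

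On the right-hand side I would treat the three kinds of terms in turn. For the convective term I would split the upwind flux $\up(\vr^k\widehat u^k u^k)_{i+1/2}$ into a centered part plus numerical diffusion: the centered part converges by Lemma \ref{lem:product} (recall $\vr_h\widehat u_h u_h\weak\vr u^2$) to the first term on the right of \eqref{eq:eff-cont}, while the diffusion remainder is controlled by a positive power of $h$ exactly as in the proofs of Lemmas \ref{lem:E2} and \ref{lem:P2}. The two inverse-Laplacian terms are precisely of the form handled by the limit identity \eqref{jada}: applied with $f_h=\vr_h\widehat u_h\weakstar\vr u$ and $g_h=\vr_h\weakstar\vr$, and using the time-derivative control $\partial_t^h(\vr_h\widehat u_h)\inb L^1(0,T;W^{-1,1}(\Om))$ from Lemma \ref{lem:timecont}, the term carrying time level $m$ converges to $\int_0^L\partial_x\Delta_D^{-1}[\vr u]\vr~dx$ at $s=t$, and the term built from the numerical initial data converges to the same expression at $s=0$; together they produce $-\left.\int_0^L\partial_x\Delta_D^{-1}[\vr u]\vr~dx\right|_{s=0}^t$. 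Finally, since $\gamma>\tfrac{3}{2}$, the residual terms satisfy $|E_1^m|+|E_2^m|\to0$ by Lemmas \ref{lem:E1} and \ref{lem:E2}.

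The main obstacle is the inverse-Laplacian terms, which is exactly where the compensated-compactness structure is used. Although $\vr_h\widehat u_h$ and $\vr_h$ converge only weakly, the operator $\partial_x\Delta_D^{-1}$ is smoothing in one space dimension, so $\partial_x\Delta_D^{-1}[\vr_h\widehat u_h]$ converges \emph{strongly} (a.e. and in $C(0,T;L^p(\Om))$) once its discrete time derivative is tamed through Lemma \ref{lem:timecont}; this is the content of \eqref{jada}, and the ensuing strong$\times$weak pairing against $\vr_h$ is what identifies the limit. I would emphasize that no simplification of the overlined product on the left is possible at this stage: pinning it down is precisely the purpose of the final section, where strong convergence of $\vr_h$ is established, so the present lemma must leave it as the weak limit.
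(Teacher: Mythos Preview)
Your proposal is correct and follows essentially the same route as the paper: start from the discrete identity \eqref{eq:eff} with $m=\lfloor t/h\rfloor$, rewrite the left-hand side as a space-time integral and identify its weak $L^1$ limit as the overlined quantity, split the upwind convective sum into a centered piece converging to $\int\vr u^2$ plus an $O(h^\alpha)$ remainder, invoke \eqref{jada} together with Lemma~\ref{lem:timecont} for the two inverse-Laplacian boundary terms, and discard $E_1^m+E_2^m$ via Lemmas~\ref{lem:E1}--\ref{lem:E2}. Your explanation of why the left-hand side must remain an unidentified weak limit is in fact more explicit than the paper's.
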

\begin{proof}
Let $m = \lfloor t/\Delta t\rfloor$, then \eqref{eq:eff} 
provides the identity
\begin{equation}\label{eq:eff-start}
	\begin{split}
		&-\int_0^{m\Delta t}\int_0^L \left(\mu (u_h)_x - p\left(\vr_h\right)\right)\left(\vr_h - \frac{\mathcal{M}}{L}\right)~dxdt \\
		&\qquad =  - \Delta x \sum_i\partial_{i}\Delta^{-1}_{i+1/2}\left[\frac{\vr^{m}_i \widehat u^{m}_i + \vr^{m}_{i+1} \widehat u^{m}_{i+1}}{2}\right]
		\vr^M_i\\
		&\qquad \quad +\Delta t\Delta x \sum_{k=0}^m\sum_i \up(\vr^k \widehat u^k u^k)_{i+1/2}\left(\frac{\mathcal{M}}{L}\right)\\
		&\qquad \quad + \Delta x \sum_i\partial_{i}\Delta^{-1}_{i+1/2}\left[\frac{\vr^{0}_i \widehat u^{0}_i + \vr^{0}_{i+1} \widehat u^{0}_{i+1}}{2}\right]
		\vr^1_i + E_1^m + E_2^m.
	\end{split}
\end{equation}	
By virtue of the previous lemma, we know that the product terms involving 
$\partial_{i}\Delta^{-1}_{i+1/2}[\cdot]$ converges
to the product of the limits. Furthermore,
from Lemmas \ref{lem:E1} and \ref{lem:E2}, we know
that the $E^m_1$ and $E^m_2$ terms converges to zero 
as $h \rightarrow 0$. Hence, the only term that might
be problematic is the term involving $\up(\vr \widehat u u)$.

By adding and subtracting, we write
\begin{equation*}
	\begin{split}
		&\Delta t\Delta x \sum_{k=0}^m\sum_i \up(\vr^k \widehat u^k u^k)_{i+1/2}\left(\frac{\mathcal{M}}{L}\right) \\
		&\qquad = \left(\frac{\mathcal{M}}{L}\right) \Delta t \Delta x
		\sum_{k=0}^M \sum_i (\vr_i^k \widehat u_i^k) u_{i+1/2}^{k,+} + (\vr_{i+1}^k\widehat u_{i+1}^k)u_{i+1/2}^{k,-} \\
		&\qquad = \left(\frac{\mathcal{M}}{L}\right) \Delta t \Delta x
		\sum_{k=0}^M \sum_i (\vr_i^k\widehat u_i^k) u^k_{i+1/2} \\
		&\qquad = \left(\frac{\mathcal{M}}{L}\right) \int_0^T\int_0^L \vr_h |\widehat u_h|^2~dxdt \\
		&\qquad\qquad\qquad+\left(\frac{\mathcal{M}}{L}\right) \Delta t \Delta x
		\frac{1}{2}\sum_{k=0}^M \sum_i (\vr_i^k\widehat u_i^k) (u^k_{i+1/2}-u^k_{i-1/2}),
	\end{split}
\end{equation*}
where the last term is bounded as
\begin{equation*}
	\begin{split}
	  &\left|\Delta t \Delta x\frac{1}{2}\sum_{k=0}^M \sum_i (\vr_i^k\widehat u_i^k) (u^k_{i+1/2}-u^k_{i-1/2})\right|	\\
	&\qquad \leq (\Delta x)\|\vr_hu_h\|_{L^2(0,T;L^2(\Om))}
	\|(u_h)_x\|_{L^2(0,T;L^2(\Om))} \\
	&\qquad \leq (\Delta x)^{1+\frac{1}{2}-\frac{1}{\gamma}}
	\|\vr_hu_h\|_{L^2(0,T;L^\gamma(\Om))}C, 
	\end{split}
\end{equation*}
where the last inequality is a standard inverse estimate.
The two previous equations tell us that 
\begin{equation*}
	\begin{split}
		&\lim_{h \rightarrow 0}\Delta t\Delta x \sum_{k=0}^m\sum_i \up(\vr^k \widehat u^k u^k)_{i+1/2}
		=\int_0^T\int_0^L \vr u^2~dxdt.
	\end{split}
\end{equation*}
Hence, there is no problems 
with passing to the limit $h \rightarrow 0$ in \eqref{eq:eff-start}
and conclude the proof.
\end{proof}

\section{Strong convergence (proof of Theorem \ref{thm:main})}
In the previous section, we proved that the 
method almost converges  to  a weak solution 
of the compressible Navier-Stokes equations. To 
conclude convergence of the method, and thereby 
conclude the main theorem, it only remains to prove that
$$
	\overline{p(\vr)} = \vr \quad \text{a.e}.
$$
This is the topic of this section. To make 
this identification, we shall prove that $\vr_h$ converges strongly a.e. 
Our strategy will be to adopt the continuous arguments of Lions \cite{Lions:1998ga} to the numerical setting.
For this purpose, we shall need the following well-known lemma.
The reader may consult \cite{Feireisl} for a proof.

\begin{lemma}\label{lem:}
Let $(\vr, u)$ satisfy the continuity equation \eqref{eq:cont} in 
the sense of distributions. If $\vr \in L^2(0,T;L^2(\Om))$ and $u \in L^2(0,T;W^{1,2}_0(0,L))$, then 
for all $B \in C^1(\R^+)$ such that $B(\vr) \in L^1(0,T;L^1(\Om))$,
\begin{equation*}\label{eq:renorm}
	B(\vr)_t + (B(\vr)u)_x + b(\vr)u_x = 0, \qquad b(\vr) = \vr B'(\vr)- B(\vr),	
\end{equation*}
in the sense of distributions on $[0,T)\times [0,L]$.
\end{lemma}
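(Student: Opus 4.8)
The plan is to follow the classical regularization argument of DiPerna and Lions: mollify the continuity equation in the space variable, apply the ordinary chain rule to the now spatially smooth mollified density, and then remove the mollification. The entire difficulty is concentrated in a single commutator estimate, and once that is in hand the renormalized identity emerges after a routine limit passage.

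First I would fix a standard mollifier $\omega_\epsilon(x) = \epsilon^{-1}\omega(x/\epsilon)$ and set $\vr_\epsilon(t,\cdot) = \vr(t,\cdot)*\omega_\epsilon$, the convolution taken in $x$ only. Convolving the continuity equation \eqref{eq:cont} with $\omega_\epsilon$ and adding and subtracting $\partial_x(\vr_\epsilon u)$ gives
\begin{equation*}
	\partial_t \vr_\epsilon + \partial_x(\vr_\epsilon u) = r_\epsilon, \qquad
	r_\epsilon := \partial_x\big(\vr_\epsilon u\big) - \partial_x\big((\vr u)*\omega_\epsilon\big).
\end{equation*}
The key point is the Friedrichs commutator lemma: since $u \in L^2(0,T;W^{1,2}_0(0,L))$ and $\vr \in L^2(0,T;L^2(0,L))$, one has $r_\epsilon \to 0$ in $L^1((0,T)\times(0,L))$ as $\epsilon \to 0$. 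This is where the two $L^2$ hypotheses are spent.

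Next, because $\vr_\epsilon$ is smooth in $x$ and its time derivative is a well-defined distribution through the equation above (indeed $\partial_t\vr_\epsilon = -\partial_x(\vr_\epsilon u)+r_\epsilon$ is integrable), I would multiply the regularized equation by $B'(\vr_\epsilon)$ and use the chain rule. Writing $u\,\partial_x B(\vr_\epsilon) = \partial_x(B(\vr_\epsilon)u) - B(\vr_\epsilon)u_x$ and collecting terms yields
\begin{equation*}
	\partial_t B(\vr_\epsilon) + \partial_x\big(B(\vr_\epsilon)u\big) + b(\vr_\epsilon)u_x = B'(\vr_\epsilon)\,r_\epsilon,
\end{equation*}
with $b(z) = zB'(z) - B(z)$ exactly as in the statement; this is the renormalized identity at the regularized level.

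Finally I would pass to the limit $\epsilon \to 0$. The terms $B(\vr_\epsilon)$, $B(\vr_\epsilon)u$, and $b(\vr_\epsilon)u_x$ converge against test functions to their unmollified counterparts by continuity of $B$ and $b$ together with $\vr_\epsilon \to \vr$ in $L^2$ and $u \in L^2(0,T;W^{1,2}_0(0,L))$. The main obstacle is the right-hand side $B'(\vr_\epsilon)r_\epsilon$: although $r_\epsilon \to 0$ in $L^1$, the factor $B'(\vr_\epsilon)$ need not be bounded. I would resolve this exactly as in the continuous theory, first establishing the identity for $B$ with globally bounded derivative, for which $B'(\vr_\epsilon)r_\epsilon \to 0$ in $L^1$ immediately, and then recovering the general $B$ (subject to the growth compatible with $\vr \in L^2$, so that $B(\vr)$ and $b(\vr)u_x$ are integrable) by a truncation and approximation argument, passing to the limit in the already-established identities. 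Since this is precisely the argument detailed in \cite{Feireisl, Lions:1998ga}, I would only sketch the truncation step.
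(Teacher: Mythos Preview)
Your argument is the standard DiPerna--Lions regularization proof and is correct. The paper itself does not prove this lemma at all; it simply states that the result is well-known and refers the reader to \cite{Feireisl}, where precisely the mollification and commutator argument you outline is carried out.
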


From Lemma \ref{lem:cont}, we know that the limit $(\vr, u)$
is a weak solution of the continuity equation and moreover that
$\vr \in L^2(0,T;L^2(\Om))$ and $u \in L^2(0,T;W^{1,2}_0(0,L))$. 
The previous lemma is then applicable.  By setting $B(z) = z \log z$
and integrating over the domain, we obtain
\begin{equation}\label{eq:strong1}
	\int_0^L \vr \log \vr~dx~(t) = \int_0^L \vr_0 \log \vr_0~ dx -\int_0^t\int_0^L \vr u_x~dx,
\end{equation}	
for any $t \in (0,T)$.

Next, we set $B(z) = z\log z$ in the renormalized continuity scheme \eqref{num:renorm}, multiply
with $\Delta x$, and sum over $k=1, \ldots, m$, to obtain, for any $m =1, \ldots, M$,
\begin{equation}\label{eq:strong2}
	\int_0^L \vr_h^m \log \vr_h^m~dx \leq \int_0^L \vr_h^0 \log \vr_h^0~dx- \int_0^{m\Delta t}\int_0^L \vr_h (u_h)_x~dxdt,
\end{equation}
where we have also used convexity of the map $z \mapsto B(z)$ to conclude a sign on 
the error terms.

By subtracting \eqref{eq:strong1} from \eqref{eq:strong2} and 
passing to the limit $h \rightarrow 0$, we deduce
\begin{equation}\label{strong:1}
	\begin{split}
		0 \leq \left(\int_0^L \overline{\vr \log \vr} - \vr \log \vr~dx\right)(t)
		\leq \int_0^t \int_0^L \vr u_x - \overline{\vr u_x}~dxdt, 
	\end{split}
\end{equation}
for any $t \in (0,T)$.
Here, the overbar denotes weak $L^1$-limit and the first 
inequality is a consequence of convexity. 

The remaining ingredient to prove strong convergence 
of the density is to prove that the right-hand side in \eqref{strong:1} 
is negative:

\begin{lemma}\label{lem:}
Let $(\vr_h, u_h)$ be the numerical solution constructed 
using Definition \ref{def:scheme} and \eqref{def:ext1}-\eqref{def:ext2}.
Then, 
\begin{equation}\label{itisneg}
	\int_0^t \int_0^L \vr u_x - \overline{\vr u_x}~dxdt \leq 0.
\end{equation}
As a consequence, $\vr_h \rightarrow \vr$ a.e as $h \rightarrow 0$.
\end{lemma}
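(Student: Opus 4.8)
The plan is to pair the discrete effective-viscous-flux identity, already passed to the limit in Lemma~\ref{lem:eff-cont}, with its exact continuous counterpart derived directly from the limit system, and then to invoke monotonicity of the pressure. The first step is to produce that continuous identity. The limit $(\vr,u,\overline{p(\vr)})$ solves the continuity equation (Lemma~\ref{lem:cont}) and the momentum equation \eqref{wcont:moment} (Lemma~\ref{lem:moment}). I would test \eqref{wcont:moment} with $\varphi(s,\cdot)=\partial_x\Delta_N^{-1}[\vr(s)-\mathcal{M}/L]$, the continuous analogue of the discrete test function $v_{i+1/2}^k$ of Proposition~\ref{pro:eff}, where $\Delta_N^{-1}$ is the Neumann inverse Laplacian, chosen so that $\varphi_x=\vr-\mathcal{M}/L$ and $\varphi$ vanishes on $\partial(0,L)$. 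Integrating over $[0,t]\times(0,L)$, integrating by parts in space in the viscous and pressure terms and in time in the momentum term, the continuity equation gives $\varphi_s=-\vr u$, so the quadratic convective contribution cancels exactly against $(\vr u^2)_x$ tested with $\varphi$ — precisely as the term $\up(\vr\widehat u u)\tfrac{\vr_i+\vr_{i+1}}{2}$ cancels between $S_1$ and $S_2$ in the proof of Proposition~\ref{pro:eff}. Using the continuous duality $\int v\,\partial_x\Delta_N^{-1}[f]\,dx=-\int\partial_x\Delta_D^{-1}[v]\,f\,dx$ (the analogue of Lemma~\ref{lem:Aop}) on the time-boundary term, I reach an identity whose right-hand side is built only from the limit functions $(\vr,u)$ and is therefore identical to that of \eqref{eq:eff-cont}:
\begin{equation*}
	-\int_0^t\int_0^L \left(\mu u_x-\overline{p(\vr)}\right)\left(\vr-\frac{\mathcal{M}}{L}\right) dx\,ds
	=\int_0^t\int_0^L \vr u^2\,dx\,ds-\left.\int_0^L\partial_x\Delta_D^{-1}[\vr u]\,\vr\,dx\right|_{s=0}^t.
\end{equation*}

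Subtracting this from \eqref{eq:eff-cont} cancels the two right-hand sides, leaving
\begin{equation*}
	\int_0^t\int_0^L \overline{\left(\mu u_x-p(\vr)\right)\left(\vr-\tfrac{\mathcal{M}}{L}\right)}-\left(\mu u_x-\overline{p(\vr)}\right)\left(\vr-\tfrac{\mathcal{M}}{L}\right)\,dx\,ds=0.
\end{equation*}
Since $\mu(u_h)_x-p(\vr_h)\weak\mu u_x-\overline{p(\vr)}$, the terms carrying the constant $\mathcal{M}/L$ drop out and the integrand reduces to $\mu(\overline{\vr u_x}-\vr u_x)-(\overline{p(\vr)\vr}-\overline{p(\vr)}\,\vr)$, so that
\begin{equation*}
	\mu\int_0^t\int_0^L \overline{\vr u_x}-\vr u_x\,dx\,ds=\int_0^t\int_0^L \overline{p(\vr)\vr}-\overline{p(\vr)}\,\vr\,dx\,ds.
\end{equation*}

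The right-hand side is nonnegative: monotonicity of $z\mapsto p(z)=az^\gamma$ gives $(p(\vr_h)-p(\vr))(\vr_h-\vr)\ge 0$ pointwise, and passing to the weak limit yields $\overline{p(\vr)\vr}\ge\overline{p(\vr)}\,\vr$. Dividing by $\mu>0$ then gives $\int_0^t\int_0^L\overline{\vr u_x}-\vr u_x\ge 0$, which is exactly \eqref{itisneg}. For the stated consequence, I feed \eqref{itisneg} into \eqref{strong:1}, obtaining $0\le\int_0^L(\overline{\vr\log\vr}-\vr\log\vr)(t)\,dx\le 0$ for a.e.\ $t$; since $z\mapsto z\log z$ is strictly convex and its nonnegative weak-limit defect integrates to zero, one concludes $\overline{\vr\log\vr}=\vr\log\vr$ a.e., and hence $\vr_h\to\vr$ a.e.\ as $h\to0$.

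The main obstacle is the rigorous derivation of the continuous identity: one must justify $\varphi=\partial_x\Delta_N^{-1}[\vr-\mathcal{M}/L]$ as an admissible test function in \eqref{wcont:moment} (its space–time regularity rests on $\vr\in L^\infty(0,T;L^\gamma(\Om))$ together with the time-control of $\vr u$), verify the exact cancellation of the quadratic convective term through the continuity equation, and treat the time-boundary term so that it reproduces the quantity $\partial_x\Delta_D^{-1}[\vr u]\,\vr$. Equally delicate is confirming that the resulting right-hand side coincides term-by-term with that of \eqref{eq:eff-cont}, so that the cancellation on subtraction is genuinely exact.
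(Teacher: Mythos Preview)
Your proposal is correct and follows essentially the same route as the paper: test the limit momentum equation \eqref{wcont:moment} with $\partial_x\Delta_N^{-1}[\vr-\mathcal{M}/L]$, use the continuity equation (equivalently, the duality between the Neumann and Dirichlet inverse Laplacians) to produce the continuous counterpart of \eqref{eq:eff-cont}, subtract, and conclude via monotonicity of $p$; the paper phrases the sign argument as ``convexity of $z\mapsto p(z)$'' but it is the same Minty-type inequality you wrote. The obstacles you flag---admissibility of the test function and the exact cancellation of the quadratic convective term---are precisely the points the paper handles, the latter via an explicit test of the continuity equation with $\partial_x\Delta_D^{-1}[\vr u]$ rather than by directly computing $\varphi_s$, but the two computations are equivalent.
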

\begin{proof}
Let $\Delta_N$ denote the Laplace operator with homogenous Neumann
conditions on $(0,L)$.
From \eqref{eq:limit} and Lemma \ref{lem:timecont}, we have that $\vr \in L^\infty(0,T;L^\gamma(\Om))$
and $\vr_t \in L^2(0,T;W^{-1,\gamma}(\Om))$, respectively. 
As a consequence,  
\begin{equation*}
	\begin{split}
			v = \partial_x \Delta_N^{-1}\left[\vr- \frac{\mathcal{M}}{L}\right] = \int_0^x \vr - \frac{\mathcal{M}}{L}~ dx,
	\end{split}
\end{equation*}
satisfies $v_x$, $v_t \in L^2(0,T;L^\gamma(\Om))$. 
Hence, we can set $v$ as test-function 
in \eqref{wcont:moment} to obtain
\begin{equation}\label{final:1}
	\begin{split}
		&\int_0^t\int_0^L \left(\mu u_x - \overline{p(\vr)}\right)\left(\vr - \frac{\mathcal{M}}{L}\right)~dxdt \\
		&\qquad = -\int_0^t \int_0^L (\vr u)\partial_t \partial_x \Delta^{-1}_N[\vr] + \vr u^2\left(\vr - \frac{\mathcal{M}}{L}\right)~dxdt \\
		&\qquad \qquad \qquad -  \int_0^L \partial_x \Delta^{-1}_D[\vr_0 u_0]\vr_0~dx+ \left(\int_0^L \partial_x \Delta^{-1}_D[\vr u]\vr~dx\right)(t) \\
		&\qquad = -\int_0^t \int_0^L \partial_t \left(\partial_x \Delta^{-1}_D[\vr u]\right) \vr+ \vr u^2\left(\vr - \frac{\mathcal{M}}{L}\right)~dxdt \\
		&\qquad \quad 
	\end{split}
\end{equation}
where the last equality is integration by parts and the duality of the
 Neumann and Dirichlet Laplacians $\Delta_N$ and $\Delta_D$, respectively. 

To proceed, we note that $\phi = \partial_x \Delta^{-1}_D[\vr u]$ is a valid test-function 
for the continuity equation \eqref{eq:cont}. Since $(\vr, u)$ is a weak solution 
of the continuity equation (Lemma \ref{lem:cont}), we deduce the identity
\begin{equation}\label{final:2}
	\begin{split}
		&\int_0^t \int_0^L \partial_t \left(\partial_x \Delta^{-1}_D[\vr u]\right) \vr~dxdt \\
		&\qquad = -\int_0^t \int_0^L \vr u \partial_x^2 \Delta^{-1}_D[\vr u]~dxdt - \left. \int_0^L \partial_x \Delta^{-1}_D[\vr_0 u_0] \vr_0~dx\right|_{t=0}^t \\
		&\qquad = -\int_0^t \int_0^L (\vr u)^2~dxdt  - \left. \int_0^L \partial_x \Delta^{-1}_D[\vr_0 u_0] \vr_0~dx\right|_{t=0}^t \\
	\end{split}
\end{equation}

By setting \eqref{final:2} in \eqref{final:1}, we discover the identity
\begin{equation}\label{final:3}
	\begin{split}
		&\int_0^t\int_0^L \left(\mu u_x - \overline{p(\vr)}\right)\left(\vr - \frac{\mathcal{M}}{L}\right)~dxdt \\
		&\qquad =\left(\frac{\mathcal{M}}{L}\right)\int_0^t \int_0^L \vr u^2 ~ dxdt +  \left. \int_0^L \partial_x \Delta^{-1}_D[\vr_0 u_0] \vr_0~dx\right|_{t=0}^t \\
		&\qquad =\int_0^t\int_0^L \overline{\left(\mu u_x - p\left(\vr\right)\right)\left(\vr - \frac{\mathcal{M}}{L}\right)}~dxdt,
	\end{split}
\end{equation}
where the last equality is \eqref{eq:eff-cont}.

Finally, we rewrite \eqref{final:3} in the form
\begin{equation}\label{final:final}
	\begin{split}
		\int_0^t\int_0^L \mu u_x \vr - \mu \overline{u_x \vr}~ dxdt
		= \int_0^t\int_0^L \overline{p(\vr)}\vr - \overline{p(\vr)\vr}~dxdt \leq 0,
	\end{split}
\end{equation}
where the last inequality follows from the convexity of $z \mapsto p(z)$.
This concludes the proof of \eqref{itisneg}.
	
By combining \eqref{final:final} and \eqref{strong:1}, we conclude that
$$
 \overline{\vr \log \vr} = \vr \log \vr \text{ a.e on $(0,T)\times (0,L)$},
$$
and hence $\vr_h \rightarrow \vr$ a.e.
\end{proof}

\subsection*{Proof of Theorem \ref{thm:main}:}
From Lemma \ref{lem:moment}, we have that $(\vr, u)$ 
solves
$$
(\vr u)_t + (\vr u^2)_x = \mu u_{xx} - \overline{p(\vr)}_x,
$$
in the sense of distributions. The previous lemma 
tell us that 
$$
\overline{p(\vr)} = p(\vr) \text{ a.e in $(0,T)\times (0,L)$}.
$$
Hence, $(\vr, u)$ is a weak solution of the compressible 
Navier-Stokes system \eqref{eq:cont}-\eqref{eq:moment}. 
This concludes the proof of our main theorem (Theorem \ref{thm:main}).
\qed

\end{document}